\documentclass[sn-mathphys-num,pdflatex]{sn-jnl}

\usepackage{fix-cm}
\usepackage{graphicx}
\usepackage{amsmath,amssymb,amsfonts}
\usepackage{amsthm}
\usepackage[title]{appendix}
\usepackage{booktabs}
\usepackage{mathtools}
\usepackage{enumitem}
\usepackage{longtable}
\usepackage{array}
\usepackage{tabularx}
\usepackage{tikz}

\hypersetup{
  colorlinks=true,
  linkcolor={blue!80!black},
  citecolor={red!80!black},
  urlcolor={blue!80!black},
  bookmarksdepth=subsubsection,
  pdftitle={Independent Sets in Multiset Profile Graphs via Weighted Local Covers},
  pdfauthor={Aryeh Lev Zabokritskiy (Yohananov)},
  pdfsubject={Combinatorics; independent sets; discrete simplices},
  pdfkeywords={multiset profiles, spreading numbers, prime-checksum conjecture, independent sets, weighted local covers, discrete simplices, multiset codes}
}

\allowdisplaybreaks
\renewcommand{\le}{\leqslant}
\renewcommand{\ge}{\geqslant}

\DeclareMathOperator{\cost}{cost}
\DeclareMathOperator{\dens}{dens}
\newcommand{\tileind}{\mathbf{1}}

\theoremstyle{plain}
\newtheorem{theorem}{Theorem}[section]
\newtheorem{proposition}[theorem]{Proposition}
\newtheorem{corollary}[theorem]{Corollary}
\newtheorem{lemma}[theorem]{Lemma}
\newtheorem{conjecture}[theorem]{Conjecture}

\theoremstyle{definition}
\newtheorem{definition}[theorem]{Definition}

\theoremstyle{remark}
\newtheorem{remark}[theorem]{Remark}

\begin{document}

\title[Independent Sets via Weighted Local Covers]
{Independent Sets in Multiset Profile Graphs via Weighted Local Covers}

\author[1]{\fnm{Aryeh~Lev} \sur{Zabokritskiy (Yohananov)}}
\email{yuhanalev@telhai.ac.il}

\affil[1]{%
\orgdiv{Department of Computer Science},
\orgname{MIGAL -- Galilee Research Institute/Tel-Hai University of Kiryat Shmona and the Galilee},
\orgaddress{\street{P.O. Box 831}, \city{Kiryat Shmona}, \postcode{1101602}, \country{Israel}}%
}

\abstract{
Let $G_q(d)$ be the unit-transfer graph on the nonnegative integer vectors
whose $q$ coordinates sum to $d$, equivalently on the multiplicity profiles
of size-$d$ multisets over $q$ symbols.  The prime-checksum conjecture predicts
that, for prime $q$ and all sufficiently large $d$, a largest independent set
is a fiber of the natural cyclic checksum.  We develop a finite-state
weighted local-cover method for $G_q(d)$: translated induced subgraphs give
local independence inequalities, while capped anchor profiles reduce the
covering conditions for infinitely many degrees to a finite rational linear
system.

This method gives new proofs of the known cases $q=3$ and $q=4$ and determines
$\alpha(G_q(d))$ exactly for $q=5$ and $q=7$ in every degree, thereby proving
the next two odd-prime cases of the conjecture.  In the complementary regime
where $d$ is fixed and $q$ grows, the same method gives an explicit upper bound
for $\alpha(G_q(5))$ for every $q\ge7$, determines $\alpha(G_q(d))$ exactly
when $q$ is a power of two and $d\in\{6,8,10\}$, and yields an asymptotically
sharp upper bound through three terms for every fixed $d\ge7$.  The finite
systems arising in these arguments are verified in exact arithmetic and
supported by independently checkable certificates.
}

\keywords{multiset profiles, spreading numbers, prime-checksum conjecture,
independent sets, weighted local covers, discrete simplices, multiset codes}

\pacs[MSC Classification]{Primary 05C69, 05C35; Secondary 05E40, 13F20, 94B25}

\maketitle
\section{Introduction}
\label{sec:intro}

The spreading number $\alpha_q(d)$ is the largest size of a family of
size-$d$ multiset profiles over $q$ symbols in which no two profiles differ by
replacing one occurrence of one symbol by another.  Equivalently, it is the
independence number of the unit-transfer graph on weak compositions of $d$.
The natural cyclic coloring gives, for prime $q$, an independent checksum
fiber of size
\[
 \left\lceil\frac1q\binom{d+q-1}{q-1}\right\rceil.
\]
The prime-checksum conjecture asserts that this construction is optimal for
all sufficiently large $d$ and is the central problem of the paper.

\begin{conjecture}[Prime-alphabet checksum optimality]
\label{conj:prime-checksum-optimality}
For every prime $q$, there is an integer $d_0(q)$ such that
\[
 \alpha_q(d)=
 \left\lceil\frac1q\binom{d+q-1}{q-1}\right\rceil
 \qquad(d\ge d_0(q)).
\]
\end{conjecture}

Under the correspondence
$a\leftrightarrow x_1^{a_1}\cdots x_q^{a_q}$, the same graph is the graph on
degree-$d$ monomials studied by Geramita, Gregory, and Roberts, who determined
the classical cases $q=3$ and $q=4$~\cite{GeramitaGregoryRoberts1986}.
Subsequent work developed the spreading and covering parameters further
\cite{CarliniHaVanTuyl2001,BabcockVanTuyl2013}.  Machacek later studied
uniqueness and rigidity of maximum independent sets
\cite{Machacek2021}.  These cases have special low-dimensional structure.  In
three variables the simplex is planar and its boundary consists of
one-dimensional paths; in four variables a parity decomposition remains
available.  The known arguments exploit precisely these features.

Our earlier joint work analyzed the cyclic checksum construction in the
multiset-deletion setting, proved its asymptotic optimality for each fixed
alphabet, obtained the exact cases $q=3$ and $q=4$, and formulated the
prime-alphabet conjecture~\cite{KreindelEssayagZabokritskiy2026a}.  It did not
settle $q=5$ or $q=7$, and it did not contain the finite-state local-cover
method developed here.

The cases $q=5$ and $q=7$ are harder because their profile simplices have
dimensions four and six.  The recurrences used in dimensions two and three no
longer control the boundary.  Our method covers the graph by translated
smaller graphs with known independence numbers.  The weight of a translate
depends only on a capped version of its anchor profile.  This leaves a finite
system of linear inequalities, even though the result applies to every large
$d$.

Our principal results prove the prime-checksum conjecture for the next two
prime alphabets.  For both $q=5$ and $q=7$ we obtain
\begin{equation}
 \alpha_q(d)=\left\lceil\frac1q\binom{d+q-1}{q-1}\right\rceil
 \label{eq:intro-q5-q7-main}
\end{equation}
apart from a few low degrees, whose exact values are determined in the
corresponding sections.  Thus both spreading numbers are completely
determined, and the first unresolved odd-prime case is $q=11$.  The same
method also gives new unified graph-theoretic proofs of the known three- and
four-variable results.

\begin{table}[htbp]
\centering
\caption{The prime cases determined in this paper.}
\label{tab:prime-checksum-status}
\small
\begin{tabularx}{\linewidth}{@{}c >{\raggedright\arraybackslash}X
 >{\raggedright\arraybackslash}X@{}}
\toprule
$q$&degrees in which the checksum formula holds&exceptions\\
\midrule
3&all $d\notin\{2,4\}$&$\alpha_3(2)=3$, $\alpha_3(4)=6$\\
5&all $d\notin\{2,4\}$&$\alpha_5(2)=5$, $\alpha_5(4)=16$\\
7&all $d\notin\{2,3,4,6\}$&$\alpha_7(2)=7$, $\alpha_7(3)=14$,
$\alpha_7(4)=35$, $\alpha_7(6)=133$\\
\bottomrule
\end{tabularx}
\end{table}

The upper-bound method is a weighted double-counting argument.  We place
induced subgraphs with known independence numbers throughout $G_q(d)$ and
assign them nonnegative weights so that every vertex is covered with total
weight at least one.  An independent set meets each placed subgraph in at most
its local independence number; summing these local inequalities therefore
gives a global upper bound.  For fixed $q$, we use structured translates whose
weights depend only on capped anchor profiles, so the boundary configurations
arising in infinitely many degrees are represented by a finite system of
inequalities.  For fixed $d$ and growing $q$, grouping profiles and anchors by
integer-partition type gives an analogous orbit system.  Computation enters
only after these mathematical reductions have been proved.  The finite systems
are checked in exact arithmetic rather than numerically; the remaining integer
or Boolean claims are verified by deterministic enumeration or by RUP/DRAT
derivations checked independently of the search program.
Section~\ref{subsec:certificate-verification} describes the common verification
procedure, and the exact certificates and checking tools are archived in the
computational companion~\cite{ZabokritskiyCompanion2026}.
Through the multiplicity-vector correspondence, these independent-set
theorems also determine optimal one-deletion-correcting multiset codes for
alphabet sizes five and seven, while recovering the known sizes for three
and four symbols; see~\cite{KreindelEssayagZabokritskiy2026a} for the broader
coding framework.
When $d$ is fixed and $q$ grows, profiles fall into finitely many partition
types.  Grouping them by type gives, for every $q\ge7$, the explicit upper bound
\[
 \alpha_q(5)\le
 \left\lfloor\frac{q(q^3+10q^2+45q+64)}{120}\right\rfloor.
\]
The same approach determines $\alpha_q(d)$ exactly for every power-of-two value
of $q$ when $d\in\{6,8,10\}$, and gives an asymptotically sharp upper bound
with three explicit terms for every fixed $d\ge7$.

\paragraph{Organization of the paper.}
Section~\ref{sec:pre} defines the discrete simplex, the unit-transfer graph,
the additive colorings that provide independent sets, and the translated
local tiles used for upper bounds.  Section~\ref{sec:tilings} proves the
weighted local-cover inequality and the finite-state reduction: for a fixed
tile family, infinitely many vertex-covering conditions are reduced to one
finite rational linear system using capped anchor profiles.
Sections~\ref{sec:q3frac} and~\ref{sec:q4frac} apply the method to the known
three- and four-variable cases.  The first uses translated triangular
templates and a planar boundary correction; the second separates odd and even
degrees and combines parity with exact-density tiles.  These sections provide
new proofs of the earlier formulas and illustrate the two basic forms of the
method.  Section~\ref{sec:q5} treats five variables.  Its eventual construction uses
only translated full residual simplices and gives a particularly symmetric
certificate for every $d\ge35$; six smaller exact-density templates and a
certified RUP refutation settle the finite range.  Section~\ref{sec:q7}
introduces orbit-union boundary tiles and completes the exact seven-variable
result by exact rational and DRAT certificates for the remaining degrees.
Section~\ref{sec:general-framework} determines the sizes of the additive color
classes for cyclic and more general finite abelian label groups.  It also
states the proposed extension of the finite-tile method to further fixed
alphabets.  Section~\ref{sec:fixed-degree} turns to the opposite regime, in
which $d$ is fixed and $q$ grows.  It derives the partition-orbit linear
program, obtains the explicit degree-five upper bound stated above, proves the
power-of-two families in degrees six, eight, and ten, records the point at
which the simpler clique family stops working, and proves the improved
three-term asymptotic upper bound for every fixed $d\ge7$.  Finally,
Section~\ref{sec:coding} gives the multiset-deletion
interpretation, summarizes the main results, and lists the remaining open
problems.

\section{Multiset profile graphs, additive colorings, and local templates}
\label{sec:pre}

Throughout, \(q\ge2\) is the number of symbols and \(d\ge0\) is the multiset
size.  We write \([q]=\{1,\ldots,q\}\), and let \(e_1,\ldots,e_q\) be the
standard unit vectors of \(\mathbb Z^q\).

\begin{definition}[Profile graph]
\label{def:monomial-graph}
A size-$d$ multiset profile over $q$ symbols is a vector in
\[
 \Delta_q(d)=\left\{a\in\mathbb Z_{\ge0}^q:\sum_{i=1}^q a_i=d\right\},
 \qquad
 |\Delta_q(d)|=\binom{d+q-1}{q-1}.
\]
The profile graph $G_q(d)$ has vertex set $\Delta_q(d)$; distinct profiles
\(a,b\) are adjacent exactly when \(a-b=e_i-e_j\) for some \(i\ne j\),
equivalently when
\(\lVert a-b\rVert_1=2\).  We write
\[
 \alpha_q(d)=\alpha(G_q(d)).
\]
\end{definition}

The map $a\mapsto x_1^{a_1}\cdots x_q^{a_q}$ identifies $G_q(d)$ with the
degree-$d$ monomial graph from the algebraic literature.  We use this
interpretation only when comparing with earlier work; all arguments below are
stated in terms of profiles.

Figure~\ref{fig:g3-3-monomials} shows this correspondence in the smallest
triangular example.  The three corners are the pure powers, and moving along
an edge transfers one unit of exponent from one variable to another.

\begin{figure}
\centering
\begin{tikzpicture}[scale=1.08]
\def\h{0.92}
\coordinate (a) at (0,{3*\h});
\coordinate (b) at (-0.5,{2*\h});
\coordinate (c) at ( 0.5,{2*\h});
\coordinate (d) at (-1,\h);
\coordinate (e) at ( 0,\h);
\coordinate (f) at ( 1,\h);
\coordinate (g) at (-1.5,0);
\coordinate (h) at (-0.5,0);
\coordinate (i) at ( 0.5,0);
\coordinate (j) at ( 1.5,0);
\draw (a)--(b) (a)--(c);
\draw (b)--(c) (b)--(d) (b)--(e);
\draw (c)--(e) (c)--(f);
\draw (d)--(e) (e)--(f);
\draw (d)--(g) (d)--(h);
\draw (e)--(h) (e)--(i);
\draw (f)--(i) (f)--(j);
\draw (g)--(h) (h)--(i) (i)--(j);
\node[fill=white,inner sep=1.5pt] at (a) {$x_1^3$};
\node[fill=white,inner sep=1.5pt] at (b) {$x_1^2x_2$};
\node[fill=white,inner sep=1.5pt] at (c) {$x_1^2x_3$};
\node[fill=white,inner sep=1.5pt] at (d) {$x_1x_2^2$};
\node[fill=white,inner sep=1.5pt] at (e) {$x_1x_2x_3$};
\node[fill=white,inner sep=1.5pt] at (f) {$x_1x_3^2$};
\node[fill=white,inner sep=1.5pt] at (g) {$x_2^3$};
\node[fill=white,inner sep=1.5pt] at (h) {$x_2^2x_3$};
\node[fill=white,inner sep=1.5pt] at (i) {$x_2x_3^2$};
\node[fill=white,inner sep=1.5pt] at (j) {$x_3^3$};
\end{tikzpicture}
\caption{The profile simplex $G_3(3)$ with vertices written as degree-three
monomials; compare~\cite[Figure~1]{Machacek2021}.}
\label{fig:g3-3-monomials}
\end{figure}

For the basic lower bound, let \(A\) be an abelian group of order \(q\) and
label the coordinates by its distinct elements \(g_1,\ldots,g_q\).  Color a
profile by
\begin{equation}
 \chi(a)=\sum_{i=1}^q a_i g_i,
 \qquad a\in\Delta_q(d).
 \label{eq:additive-coloring}
\end{equation}
For \(s\in A\), let
\[
 I_s(d)=\{a\in\Delta_q(d):\chi(a)=s\}.
\]
The cyclic specialization takes \(A=\mathbb Z_q\) and \(g_i=i-1\).  We write
\begin{equation}
 \sigma_q(a)=\sum_{i=1}^q(i-1)a_i\pmod q
 \label{eq:cyclic-checksum}
\end{equation}
and denote its fibers by
\begin{equation}
 \mathcal C_{q,d}(r)=\{a\in\Delta_q(d):\sigma_q(a)=r\},
 \qquad
 N_r^{(q)}(d)=|\mathcal C_{q,d}(r)|.
 \label{eq:checksum-classes}
\end{equation}

If \(a-b=e_i-e_j\) with \(i\ne j\), then
\(\chi(a)-\chi(b)=g_i-g_j\ne0\).  Thus \(\chi\) is a proper \(q\)-coloring and
each fiber is independent.  Since the fibers partition \(\Delta_q(d)\),
\begin{equation}
 \alpha_q(d)\ge
 \left\lceil\frac1q\binom{d+q-1}{q-1}\right\rceil.
 \label{eq:additive-lower-bound}
\end{equation}

For three variables these fibers give the familiar periodic pattern in the
triangular grid.  The classical proofs for three and four variables and the
uniqueness results of Machacek show that the decisive issue is how this
interior pattern meets the boundary~\cite{GeramitaGregoryRoberts1986,Machacek2021}.
Translated induced subgraphs are particularly well suited to that boundary
problem.  Their local independence numbers are unchanged, while their
incidences are governed by exponent arithmetic.  These translations take place
in the semigroup \(\mathbb Z_{\ge0}^q\), not in a quotient group.
Let \(H=G_q(r)[U]\) be an induced subgraph of \(G_q(r)\), with
\(U\subseteq\Delta_q(r)\), and let \(b\in\Delta_q(d-r)\).  The translate of
\(H\) by \(b\) is
\[
 b+H:=G_q(d)[b+U],
 \qquad b+U=\{b+h:h\in U\}.
\]
When used in a covering argument, this translated copy is a \emph{local tile}.
The purpose of these tiles is to turn a known small independence number into
a local upper bound inside the large graph.
The map \(h\mapsto b+h\) is a graph isomorphism because translation preserves
differences of exponent vectors.  Hence
\begin{equation}
 \alpha(b+H)=\alpha(H).
 \label{eq:translate-preserves-alpha}
\end{equation}
For every independent set \(I\subseteq G_q(d)\),
\begin{equation}
 |I\cap V(b+H)|\le\alpha(H).
 \label{eq:local-tile-bound}
\end{equation}
The next section combines many inequalities of this form.  If their weighted
sum counts every ambient vertex at least once, it gives an upper bound on the
size of every independent set and hence on $\alpha_q(d)$.

Three templates recur below.  The upward simplex
\(b+\Delta_q(1)=\{b+e_i:i\in[q]\}\) is a \(K_q\); a translated full simplex
\(b+\Delta_q(r)\) is a copy of \(G_q(r)\); and reflected sets of the form
\(b+\{\mathbf1-e_i:i\in[q]\}\) are again \(q\)-cliques.  Theorem~\ref{thm:finite-state-reduction} applies to every finite family of
such shapes.  The next sections use full simplices, the two clique
orientations, and a small number of boundary-correcting templates.

\section{Finite-state weighted local decompositions}
\label{sec:tilings}

The small graph and its position in the ambient simplex play different roles.
A base tile is a fixed finite graph $H=G_q(r)[U]$ inside the small simplex
$\Delta_q(r)$.  For every anchor $a\in\Delta_q(d-r)$, the translate
$a+H=G_q(d)[a+U]$ places the same shape at a new position in the larger
simplex.  Thus a finite tile is moved throughout the ambient space by varying
$a$; a weighted tiling assigns weights to these translated copies, which may
overlap and need not form a partition.  Choosing the base tiles is not
automatic.  Full simplices suffice in the eventual three- and five-variable arguments,
the four-variable argument also needs the two clique orientations, and the
seven-variable argument requires mixed profile layers near the boundary.  The
finite-state theorem does not discover these shapes: once they are chosen, it
reduces the search for weights to a finite system, and the later sections show
that the stated choices give the required bounds.  To separate this universal
covering argument from the particular geometry, let $G=(V,E)$ be a finite
graph and let $\mathcal T$ be a finite indexed family of induced subgraphs of
$G$.  Its members are the placed tiles; they may overlap, need not be cliques,
and need not be mutually isomorphic.  No algebraic condition is imposed: for
every subset $W\subseteq V$, the induced graph $G[W]$ is an admissible tile.
In particular, a tile need not be a coset, a simplex, an orbit, or a member of
a partition.  In the profile-graph applications we choose translates of
algebraically structured subsets because their independence numbers and
incidences can be controlled; this is a feature of our constructions, not a
hypothesis of the local-cover bound.
Assign a nonnegative weight $w_T$ to every $T\in\mathcal T$.  The coverage of
a vertex and the independence cost are
\[
 F(x)=\sum_{T\in\mathcal T}w_T\mathbf 1_{V(T)}(x),
 \qquad
 \cost(\mathcal T,w)=\sum_{T\in\mathcal T}w_T\alpha(T).
\]
Here $F(x)$ is simply the total weight of the placed tiles that contain $x$.
It is the bookkeeping tool that turns the local inequalities into the desired
global upper bound: the condition $F(x)\ge1$ ensures that every vertex of an
independent set is counted with total weight at least one, while
$\cost(\mathcal T,w)$ is the resulting upper-bound cost.  The weighted family
is a \emph{fractional local cover} when $F(x)\ge1$ for all $x\in V$.  Thus a
correct choice of tile shapes and weights that gives $F\ge1$ at the target
cost is exactly a solution of the upper-bound problem; the fixed-alphabet
sections construct such choices.

\begin{theorem}[Weighted local-cover bound]
\label{thm:weighted-tile-bound}
Every fractional local cover of $G$ satisfies
\[
 \alpha(G)\le \sum_{T\in\mathcal T}w_T\alpha(T).
\]
For a fixed template family, the best bound of this form is the value of
\begin{equation}
 \begin{aligned}
 \text{minimize}\quad&\sum_{T\in\mathcal T}\alpha(T)w_T,\\
 \text{subject to}\quad&
       \sum_{\substack{T\in\mathcal T\\x\in V(T)}}w_T\ge1
       &&(x\in V),\\
 &w_T\ge0&&(T\in\mathcal T).
 \end{aligned}
 \label{eq:abstract-tile-lp}
\end{equation}
\end{theorem}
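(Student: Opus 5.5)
The bound follows from a double count. Fix a maximum independent set $I\subseteq V$, so $|I|=\alpha(G)$. We sum the coverage function $F$ over $I$ in two ways. Since $F(x)\ge1$ for every $x\in V$,
\[
 \alpha(G)=|I|\le\sum_{x\in I}F(x).
\]
Expanding $F$ and interchanging the two finite sums gives
\[
 \sum_{x\in I}F(x)=\sum_{T\in\mathcal T}w_T\,|I\cap V(T)|.
\]

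For each placed tile $T$, the set $I\cap V(T)$ is independent in $G$. Because $T$ is an induced subgraph, it is also independent in $T$, so $|I\cap V(T)|\le\alpha(T)$. This is the abstract form of \eqref{eq:local-tile-bound}. The weights are nonnegative, so multiplying each local inequality by $w_T$ and summing preserves the inequalities, and we obtain $\alpha(G)\le\sum_T w_T\alpha(T)=\cost(\mathcal T,w)$.

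For the second assertion, the feasible region of \eqref{eq:abstract-tile-lp} is exactly the set of nonnegative weight vectors $w$ that make $(\mathcal T,w)$ a fractional local cover. Its objective is exactly $\cost(\mathcal T,w)$. Hence the set of bounds obtainable from the first part with the fixed family $\mathcal T$ is precisely the set of objective values of the LP, and the best such bound is the infimum of these values.

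The only point needing care is that this infimum is attained, so that it is a genuine LP value. The program is feasible: every vertex lies in some tile (otherwise no cover exists and the statement is vacuous), so giving weight $1$ to every tile is feasible. The objective is bounded below by $0$, since $\alpha(T)\ge0$ and $w\ge0$. A feasible linear program that is bounded below attains its minimum, by the fundamental theorem of linear programming, because $\mathcal T$ is finite. There is no real obstacle in this argument. The substantive work lies in the later sections, which choose tile families for which this LP value matches the checksum lower bound.
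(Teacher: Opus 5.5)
Your proof is correct and follows the paper's argument: the paper's proof is exactly the double count $|I|\le\sum_{x\in I}F(x)=\sum_{T}w_T|I\cap V(T)|\le\sum_{T}w_T\alpha(T)$, followed by maximizing over independent sets $I$. Your extra remarks on the linear program make explicit what the paper treats as immediate from the definitions. These are that the feasible region is exactly the set of fractional local covers, and that the minimum is attained because the program is finite, feasible and bounded below.
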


\begin{proof}
For every independent set $I$,
\[
 |I|\le\sum_{x\in I}F(x)
 =\sum_{T\in\mathcal T}w_T|I\cap V(T)|
 \le\sum_{T\in\mathcal T}w_T\alpha(T).
\]
Maximizing over $I$ proves the assertion.
\end{proof}

\begin{remark}
The translated or coset-like tiles used later are a feature of our
constructions, not a requirement of the covering argument.  The proof above
uses only that each tile is an induced subgraph, and therefore applies
verbatim to an arbitrary family of vertex subsets.  Consequently, the two
elementary examples below may legitimately use cliques and a singleton that
do not arise as cosets or as translates of a base tile.
\end{remark}

As a concrete first example, let
$G_0=K_3^{(1)}\sqcup K_3^{(2)}\sqcup K_3^{(3)}$.  Use the three components as
tiles and give each weight one.  Then $F\equiv1$, the cost is three, and
choosing one vertex from each component gives
\[
 \alpha(G_0)=3=\cost(\mathcal T,w).
\]
The same argument shows that for
$K_{n_1}\sqcup\cdots\sqcup K_{n_m}$ the component cliques with unit weights
give the optimal cost $m$ immediately.

Figure~\ref{fig:coverage-intuition} shows why both the tile family and its
weights matter once the tiles interact.  Let $L=\{a_1,a_2,a_3\}$ and
$R=\{b_1,b_2,b_3\}$ be disjoint triangles, and let $x$ be adjacent to all six
side vertices, with no edges between $L$ and $R$.  Then $\alpha(G)=2$: one may
choose one vertex from each side, whereas choosing $x$ excludes every other
vertex.  In panel (a), assign weights $w_L,w_x,w_R$ to the disjoint tiles
$L,\{x\},R$.  The coverage constraints force
$w_L\ge1$, $w_x\ge1$, and $w_R\ge1$, so every feasible choice has cost at
least three.  Unit weights attain this minimum and give $F\equiv1$, but the
resulting bound is not sharp.  No different choice of weights can repair this
tile family.

In panel (b), take instead the two overlapping tiles
$T_L=G[L\cup\{x\}]$ and $T_R=G[R\cup\{x\}]$, both isomorphic to $K_4$, with
weights $s$ and $t$.  Their coverages and cost are
\[
 F(a_i)=s,\qquad F(b_j)=t,\qquad F(x)=s+t,
 \qquad \cost=s+t.
\]
Indeed, each $a_i$ lies only in $T_L$, each $b_j$ lies only in $T_R$, and $x$
lies in both tiles.  Covering the left and right side vertices therefore forces
$s\ge1$ and $t\ge1$; covering $x$ imposes only the weaker condition
$s+t\ge1$.  The minimum is attained at $s=t=1$.  At these weights every side
vertex has coverage one and $x$ has coverage two.  Since each tile is a
$K_4$, it has independence number one, and the weighted local-cover bound gives
\[
 \alpha(G)\le 1\cdot\alpha(T_L)+1\cdot\alpha(T_R)=2.
\]
Conversely, any pair consisting of one vertex of $L$ and one vertex of $R$ is
independent, because there are no edges between the two sides.  Hence
$\alpha(G)=2$, and the bound from panel~(b) is sharp.  By contrast,
$s=t=1/2$ covers the central vertex exactly but leaves every side vertex with
coverage $1/2$, so it is not feasible.  A successful certificate therefore
requires both suitable tile shapes and suitable weights.

The disjoint situation in panel~(a) is a useful baseline, but it is less
representative of our applications.  There the tiles are algebraically
structured translates, often of coset-like sets, and different translates
typically overlap; the relevant difficulty is therefore closer to
panel~(b).  The algebraic structure lets us control the independence number
of each tile and the pattern of its incidences, but it does not by itself make
the resulting bound sharp.  The main work of the paper is to choose structured
tile families and weights for which these overlapping local bounds combine to
give the desired exact global bound.

\begin{figure}
\centering
\begin{tikzpicture}[
  scale=.73,transform shape,
  vertex/.style={circle,draw,fill=white,minimum size=6.5mm,inner sep=0pt},
  edge/.style={line width=.8pt}]

\begin{scope}
  \path[draw=blue!70!black,fill=blue!8,rounded corners=8pt]
    (-3.55,-1.42) rectangle (-.80,1.42);
  \path[draw=black!60,fill=black!5] (0,0) circle (.54);
  \path[draw=red!70!black,fill=red!8,rounded corners=8pt]
    (.80,-1.42) rectangle (3.55,1.42);
  \node[vertex] (a1) at (-3,1) {$a_1$};
  \node[vertex] (a2) at (-3,-1) {$a_2$};
  \node[vertex] (a3) at (-1.35,0) {$a_3$};
  \node[vertex] (x1) at (0,0) {$x$};
  \node[vertex] (b3) at (1.35,0) {$b_3$};
  \node[vertex] (b1) at (3,1) {$b_1$};
  \node[vertex] (b2) at (3,-1) {$b_2$};
  \draw[edge] (a1)--(a2);
  \draw[edge] (a2)--(a3);
  \draw[edge] (a3)--(a1);
  \draw[edge] (b1)--(b2);
  \draw[edge] (b2)--(b3);
  \draw[edge] (b3)--(b1);
  \foreach \u in {a1,a2,a3,b1,b2,b3} \draw[edge] (x1)--(\u);
  \node[blue!70!black,font=\scriptsize] at (-1.17,1.18) {$K_3$};
  \node[font=\scriptsize] at (0,.78) {$K_1=\{x\}$};
  \node[red!70!black,font=\scriptsize] at (1.17,1.18) {$K_3$};
  \node[font=\bfseries] at (0,2.02) {(a) Clique tiles $K_3,K_1,K_3$};
  \node at (0,-1.87) {$w_L=w_x=w_R=1$,\quad cost $=3$};
\end{scope}

\begin{scope}[xshift=8.15cm]
  \path[draw=blue!70!black,fill=blue!8,rounded corners=8pt]
    (-3.55,-1.42) rectangle (.55,1.42);
  \path[draw=red!70!black,fill=red!8,rounded corners=8pt]
    (-.55,-1.42) rectangle (3.55,1.42);
  \node[vertex] (c1) at (-3,1) {$a_1$};
  \node[vertex] (c2) at (-3,-1) {$a_2$};
  \node[vertex] (c3) at (-1.35,0) {$a_3$};
  \node[vertex] (x2) at (0,0) {$x$};
  \node[vertex] (d3) at (1.35,0) {$b_3$};
  \node[vertex] (d1) at (3,1) {$b_1$};
  \node[vertex] (d2) at (3,-1) {$b_2$};
  \draw[edge,blue!70!black] (c1)--(c2);
  \draw[edge,blue!70!black] (c2)--(c3);
  \draw[edge,blue!70!black] (c3)--(c1);
  \foreach \u in {c1,c2,c3} \draw[edge,blue!70!black] (x2)--(\u);
  \draw[edge,red!70!black] (d1)--(d2);
  \draw[edge,red!70!black] (d2)--(d3);
  \draw[edge,red!70!black] (d3)--(d1);
  \foreach \u in {d1,d2,d3} \draw[edge,red!70!black] (x2)--(\u);
  \node[blue!70!black,font=\scriptsize] at (-1.17,1.18) {$K_4$};
  \node[red!70!black,font=\scriptsize] at (1.17,1.18) {$K_4$};
  \node[font=\bfseries] at (0,2.02) {(b) Clique tiles $K_4,K_4$};
  \node at (0,-1.87) {$s=t=1$,\quad $F(x)=2$,\quad cost $=2$};
\end{scope}
\end{tikzpicture}
\caption{Two tile families on the same graph.  The disjoint family in panel
(a) is a valid exact tiling but cannot give a bound below three.  The
overlapping clique family in panel (b) gives the sharp bound two; overcoverage
at the central vertex is harmless.  All three edges of each $K_3$ and all six
edges of each colored $K_4$ are displayed.}
\label{fig:coverage-intuition}
\end{figure}

The dual of~\eqref{eq:abstract-tile-lp} is the fractional packing program
\begin{equation}
 \begin{aligned}
 \text{maximize}\quad&\sum_{x\in V}y_x,\\
 \text{subject to}\quad&\sum_{x\in V(T)}y_x\le\alpha(T)
       &&(T\in\mathcal T),\\
 &y_x\ge0&&(x\in V).
 \end{aligned}
 \label{eq:abstract-tile-dual}
\end{equation}
If all templates are cliques, this is the usual fractional clique-cover
bound; see~\cite{ScheinermanUllman1997}.  The applications here use translated
non-clique templates with known independence numbers.  This is distinct from
the Delsarte linear program, whose variables are distance distributions in an
association scheme~\cite{Delsarte1973}.

A fractional local cover is \emph{exact} if $F(x)=1$ for every vertex.  If all
positive-weight templates have common density
\[
 \dens(T)=\frac{\alpha(T)}{|V(T)|}=\rho,
\]
define the total overcoverage by
\[
 E(F)=\sum_{x\in V}(F(x)-1).
\]
Double counting gives the defect identity
\begin{equation}
 \cost(\mathcal T,w)=\rho\bigl(|V|+E(F)\bigr).
 \label{eq:abstract-defect}
\end{equation}
In particular, an exact cover has cost $\rho|V|$.  If $G$ has a proper
$q$-coloring, every induced subgraph has independence density at least $1/q$.
We call a template \emph{exact-density} when
\begin{equation}
 \alpha(T)=\frac{|V(T)|}{q}.
 \label{eq:exact-density-general}
\end{equation}
Together with Theorem~\ref{thm:weighted-tile-bound}, a cover by exact-density
templates therefore gives
\begin{equation}
 \alpha(G)\le\frac{|V|+E(F)}q.
 \label{eq:exact-density-bound}
\end{equation}
Thus $E(F)<q$ implies $\alpha(G)\le\lceil |V|/q\rceil$.  More generally, if
$B$ is an integer lower target, a cover of cost strictly smaller than $B+1$
proves $\alpha(G)\le B$.

We now isolate the finite-state step.  Fix a finite family
\[
 \mathcal H=\{U_j\subseteq\Delta_q(r_j):1\le j\le s\}.
\]
The translate with anchor $a\in\Delta_q(d-r_j)$ has vertex set $a+U_j$.
Only translation invariance and the independence number of each tile are used.
For a cap $C\ge0$, write
\[
 \tau_C(a)=(\min(a_1,C),\ldots,\min(a_q,C)).
\]
We assign the translate $a+U_j$ the weight $z_{j,\tau_C(a)}$.  The state is
initially ordered; coordinate symmetry will later permit sorting.
Fix also a starting degree $d_0$, a modulus $L$, a residue class
$\ell\pmod L$, a polynomial $P_\ell\in\mathbb Q[d]$ of degree at most $q-1$,
and a constant $\delta\in\mathbb Q$.  We seek nonnegative rational weights and one
constant $\delta'\le\delta$ such that, for every $d\ge d_0$ with
$d\equiv\ell\pmod L$,
\begin{equation}
 F_d(x)\ge1\quad(x\in\Delta_q(d)),
 \qquad
 \cost(F_d)=P_\ell(d)+\delta'.
 \label{eq:finite-state-goal}
\end{equation}

\begin{theorem}[Finite-state reduction]
\label{thm:finite-state-reduction}
For the data fixed above, the requirements in~\eqref{eq:finite-state-goal}
are equivalent to the feasibility of a finite rational linear system.
\end{theorem}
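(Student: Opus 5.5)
We reduce both conditions in~\eqref{eq:finite-state-goal} to finitely many linear constraints in the finitely many unknowns $z_{j,\sigma}$, where $\sigma$ ranges over $\{0,\ldots,C\}^q$, together with $\delta'$.

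\textbf{Coverage.} For $x\in\Delta_q(d)$, the anchors covering $x$ by tile $j$ are $a=x-u$ with $u\in U_j$ and $x-u\ge0$. Hence
\[
 F_d(x)=\sum_j\sum_{u\in U_j,\,u\le x} z_{j,\tau_C(x-u)}.
\]
Let $R=\max_j r_j$ and set $C'=C+R$. Whether $u\le x$ holds, and the value of $\tau_C(x-u)$, depend only on $\tau_{C'}(x)$. Indeed, if $x_i\ge C'$ then $x_i-u_i\ge C$, so the $i$th capped coordinate is $C$ regardless of $x_i$. Therefore $F_d(x)=\Phi(\tau_{C'}(x))$ for an explicit linear form $\Phi$ in the $z$'s, which is independent of $d$.

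The state $\tau_{C'}(x)$ can be any $\eta\in\{0,\ldots,C'\}^q$ such that some $x\in\Delta_q(d)$ has $\tau_{C'}(x)=\eta$. Let $S$ be the set of coordinates with $\eta_i<C'$. This is possible exactly when either $\sum\eta=d$ with all coordinates in $S$, or some coordinate satisfies $\eta_i=C'$ and $\sum_{i\in S}\eta_i+C'\,|[q]\setminus S|\le d$. Once $d\ge qC'$, every state in $\{0,\ldots,C'\}^q$ with at least one coordinate equal to $C'$ is realized, and states with no coordinate equal to $C'$ are not realized. So for all $d\ge\max(d_0,qC')$ the coverage requirement is the fixed finite system $\Phi(\eta)\ge1$ over these states. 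The finitely many degrees $d_0\le d<qC'$ in the residue class each contribute their own finite list of inequalities $\Phi(\eta)\ge1$. Altogether this is a finite set of linear inequalities.

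\textbf{Cost.} The cost is $\sum_j\alpha(U_j)\sum_{\sigma}z_{j,\sigma}\,N_{d-r_j}(\sigma)$, where $N_m(\sigma)$ counts the anchors $a\in\Delta_q(m)$ with $\tau_C(a)=\sigma$. Fix $\sigma$ and let $k$ be the number of coordinates equal to $C$. Then $N_m(\sigma)$ counts the nonnegative solutions of $\sum_{i:\sigma_i=C}b_i=m-\sum\sigma$, where $b_i=a_i-C\ge0$. Thus $N_m(\sigma)=\binom{m-|\sigma|+k-1}{k-1}$ when $k\ge1$ (and the count is valid for $m\ge|\sigma|$), or $N_m(\sigma)=[m=|\sigma|]$ when $k=0$. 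For $d\ge d_1:=\max(d_0,qC+R)$ each $N_{d-r_j}(\sigma)$ is a polynomial in $d$ of degree at most $q-1$. Hence $\cost(F_d)=\sum_{t=0}^{q-1}c_t(z)\,d^t$ with rational coefficients $c_t$ that are linear in $z$.

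The identity $\cost(F_d)=P_\ell(d)+\delta'$ holds for infinitely many $d$ in the residue class exactly when it holds as a polynomial identity. That amounts to $q$ linear equations: $c_t(z)$ equals the coefficient of $d^t$ in $P_\ell$ for $t\ge1$, and $c_0(z)=P_\ell$'s constant term plus $\delta'$. For the finitely many degrees $d_0\le d<d_1$ in the class, the cost is again an explicit linear form in $z$. Each of these gives one more equation, $\cost(F_d)=P_\ell(d)+\delta'$.

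Adjoining $z\ge0$ and $\delta'\le\delta$ yields a finite rational linear system. By construction, its feasibility is equivalent to~\eqref{eq:finite-state-goal}. Both directions hold because each reduction above is an equivalence, not merely an implication.

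The main obstacle is the careful realizability bookkeeping. This means checking that the enlarged cap $C'$ makes $F_d$ genuinely a function of the capped state, and that the thresholds beyond which state sets and counting polynomials stabilize are correct. Everything else is routine linear bookkeeping.
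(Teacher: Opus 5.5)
Your proposal is correct and follows essentially the same route as the paper. You enlarge the cap to $C+R$ so that $F_d(x)$ depends only on the capped vertex state, and use the fact that every saturated state is realized in all sufficiently large degrees of the residue class, so those rows are necessary. You compute anchor-state multiplicities by stars and bars to make the cost a polynomial past a threshold, match the nonconstant coefficients with the constant absorbed into $\delta'$, and check the finitely many transition degrees directly. The only cosmetic difference is that below $qC'$ you list the realized states degree by degree. The paper instead imposes all saturated-state rows once and adds the finitely many unsaturated profiles with $d_0\le|x|\le q(D-1)$ in the residue class; the two systems are equivalent.
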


\begin{proof}
For $x\in\Delta_q(d)$, the coverage is
\begin{equation}
 F_d(x)=\sum_{j=1}^s\ 
 \sum_{\substack{u\in U_j\\u\le x}}
 z_{j,\tau_C(x-u)},
 \label{eq:state-coverage-formula}
\end{equation}
where $u\le x$ is coordinatewise.  Put $R=\max_j r_j$, $D=C+R$, and
\[
 \sigma_D(x)=(\min(x_1,D),\ldots,\min(x_q,D)).
\]
Suppose that $x_i\ge D$.  Since
$0\le u_i\le r_j\le R$, we have $x_i-u_i\ge C$, and therefore the $i$th
coordinate of $\tau_C(x-u)$ is $C$, independently of the actual value of
$x_i$.  If $x_i<D$, the state $\sigma_D(x)$ records $x_i$ exactly and hence
also records whether $u_i\le x_i$ and the value of
$\min(x_i-u_i,C)$.  Thus every summand and every eligibility condition in
\eqref{eq:state-coverage-formula} is determined by $j$, $u$, and
$\sigma_D(x)$.  There are only $(D+1)^q-D^q$ saturated ordered states, so all
saturated coverage inequalities form a finite system.  Every saturated state
$s$ is realized in every degree $d\ge |s|$: choose an index $i$ with $s_i=D$,
replace that coordinate by $D+d-|s|$, and leave all other coordinates equal
to those of $s$.  The resulting profile has degree $d$ and capped state $s$.
Thus the displayed state inequalities are necessary as well as sufficient,
also after restricting to one residue class.
If $x$ is unsaturated, then $x_i\le D-1$ for every $i$, so
$d=|x|\le q(D-1)$.  After fixing the residue class $\ell\pmod L$, we retain
only those unsaturated profiles with
$d_0\le |x|\le q(D-1)$ and $|x|\equiv\ell\pmod L$; profiles in the other
residue classes belong to their corresponding finite systems.  Hence only
finitely many unsaturated profiles remain in the system under consideration.

For the cost, fix a state $t$ with capped coordinate set $K$, $|K|=k$.  The
coordinates outside $K$ are fixed by $t$.  For $i\in K$, write
$a_i=C+y_i$ with $y_i\ge0$.  The anchor equation $|a|=d-r_j$ becomes
\[
 \sum_{i\in K}y_i=d-r_j-|t|.
\]
For $k\ge1$, stars and bars gives
\begin{equation}
 m_{j,t}(d)=
 \binom{d-r_j-|t|+k-1}{k-1},
 \label{eq:general-state-multiplicity}
\end{equation}
whenever the state is feasible.  For $k=0$, every anchor coordinate is fixed,
so the state occurs only in degree $d=r_j+|t|$, with multiplicity one there
and zero otherwise.  After enlarging the threshold $d_1$ chosen below if
necessary, all these finitely many contributions are handled among the
directly evaluated transition degrees and are omitted from the eventual
coefficient comparison.  For $k\ge1$, multiplying these multiplicities by the fixed local costs
$\alpha(G_q(r_j)[U_j])$ and the weights $z_{j,t}$ shows that the total cost is
a polynomial in $d$ of degree at most $q-1$, with coefficients that are
rational linear forms in the variables $z_{j,t}$.

All coverage coefficients are integers, and the coefficients of the
multiplicity polynomials are rational.  Choose a threshold $d_1\ge d_0$ after
which every capped anchor state having a capped coordinate is feasible.  For
a fixed residue class, impose the finite saturated inequalities, the finitely
many unsaturated inequalities, and nonnegativity.  For $d\ge d_1$, matching
the coefficients of $d^m$, $m\ge1$, between the cost and $P_\ell(d)$ gives
finitely many rational linear equations, and the constant coefficient is
written as the additional variable $\delta'$ with $\delta'\le\delta$.  For
each of the finitely many transition degrees
\[
 d_0\le d<d_1,\qquad d\equiv\ell\pmod L,
\]
include the directly evaluated coverage inequalities and the linear identity
$\cost(F_d)=P_\ell(d)+\delta'$.  These finitely many constraints are exactly the
finite rational system asserted in the statement.
\end{proof}

All families used below are invariant under permuting the $q$ coordinates.
Averaging the weights over these permutations preserves both coverage and
cost.  We may therefore assume that a weight depends only on the sorted capped
anchor profile, and it is enough to impose one coverage inequality for each
sorted capped vertex profile.

\begin{corollary}[Symmetry quotient]
\label{cor:symmetry-quotient}
Let $\Gamma\le S_q$ preserve the template library: for every
$\gamma\in\Gamma$ and every $j$, the set $\gamma U_j$ is another member
$U_{\gamma j}$ of the library, with the same local independence number.
Impose the equivariance condition
\[
 z_{\gamma j,\gamma t}=z_{j,t}.
\]
Then the variables and constraints in
Theorem~\ref{thm:finite-state-reduction} may be indexed by the corresponding
$\Gamma$-orbits of template--anchor pairs and vertex states, without changing
feasibility or cost.  In particular, for full coordinate symmetry the anchor
and vertex orbits may be represented by sorted capped tuples.
\end{corollary}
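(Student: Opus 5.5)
The plan is the standard averaging (Reynolds-operator) argument, carried out on top of the finite system built in Theorem~\ref{thm:finite-state-reduction}. The first step is to record how $\Gamma$ acts on all the data. A permutation $\gamma\in\Gamma$ acts on $\mathbb Z_{\ge0}^q$ by permuting coordinates. This action commutes with translation: $\gamma(a+U_j)=\gamma a+\gamma U_j=\gamma a+U_{\gamma j}$. It also commutes with capping: $\tau_C(\gamma a)=\gamma\tau_C(a)$ and $\sigma_D(\gamma x)=\gamma\sigma_D(x)$. It preserves $|a|$, and it maps $\Delta_q(d-r_j)$ onto $\Delta_q(d-r_{\gamma j})$, because $\gamma U_j=U_{\gamma j}$ forces $r_{\gamma j}=r_j$. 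Finally, $\gamma$ is a graph automorphism of $G_q(d)$, so $\alpha(U_{\gamma j})=\alpha(U_j)$, as the hypothesis also states.

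The second step is to show that the whole finite system is invariant. Let $z$ be any weight vector and set $(\gamma z)_{j,t}=z_{\gamma^{-1}j,\gamma^{-1}t}$. Using the coverage formula~\eqref{eq:state-coverage-formula} and substituting $u=\gamma u'$ with $u'\in U_{\gamma^{-1}j}$, one checks that
\[
 F_d^{\gamma z}(x)=F_d^{z}(\gamma^{-1}x).
\]
The left side is the coverage of $x$ under the weights $\gamma z$; the right side is the coverage of $\gamma^{-1}x$ under $z$. Hence each saturated, unsaturated, or transition coverage inequality for $\gamma z$ is the inequality for $z$ at a permuted state, and the set of these inequalities is permuted among itself.

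For the cost, the multiplicity $m_{j,t}(d)$ from~\eqref{eq:general-state-multiplicity} depends only on $r_j$, $|t|$ and the number $k$ of capped coordinates of $t$. All three are $\Gamma$-invariant, as is $\alpha(U_j)$. Therefore $\cost(F_d^{\gamma z})=\cost(F_d^{z})$ in every degree, and the coefficient equations and the $\delta'$ constraint are unchanged.

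The third step is the averaging itself. Given a feasible $z$, put $\bar z=|\Gamma|^{-1}\sum_{\gamma}\gamma z$. The system is convex (linear), and each $\gamma z$ is feasible with the same cost polynomial and the same $\delta'$. Hence $\bar z$ is feasible with that same cost, and $\bar z$ is nonnegative and rational. By construction $\bar z_{\gamma j,\gamma t}=\bar z_{j,t}$, so it is constant on $\Gamma$-orbits of template–anchor pairs. Conversely, any equivariant solution is in particular a solution of the original system. Feasibility and optimal cost are therefore unchanged when equivariance is imposed.

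The last step is to collapse the constraints. Under equivariance, the identity from the second step gives $F_d(\gamma x)=F_d(x)$. So the coverage inequalities at $x$ and at $\gamma x$ coincide, and one inequality per orbit of vertex states suffices. Likewise the cost becomes a sum over orbits, each orbit weighted by its size times the common multiplicity. For $\Gamma=S_q$, each orbit of ordered capped tuples contains a unique sorted tuple, which gives the final sentence of the statement.

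The only delicate point is the bookkeeping in the second step. One must check that the eligibility condition $u\le x$ and the capped state $\tau_C(x-u)$ transform correctly. One must also handle the $k=0$ states and the transition degrees, which are evaluated directly, and verify that their defining conditions are $\Gamma$-invariant as well. This holds because every such condition depends only on $|t|$, $r_j$, and coordinatewise comparisons, all of which are permutation-equivariant.
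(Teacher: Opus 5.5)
Your proposal is correct and takes essentially the paper's route. The paper's proof likewise notes that $\Gamma$ acts by automorphisms sending $a+U_j$ to $\gamma a+U_{\gamma j}$ while preserving capacities, incidences, and multiplicities; it then identifies variables and constraints along orbits and lifts quotient solutions back, with the averaging step stated just before the corollary. Your version simply makes that averaging and the identity $F_d^{\gamma z}(x)=F_d^{z}(\gamma^{-1}x)$ explicit.
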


\begin{proof}
The group $\Gamma$ acts by automorphisms on every $G_q(d)$ and sends the
translate $a+U_j$ to $\gamma a+U_{\gamma j}$.  It preserves local
independence numbers, coverage incidences, and anchor multiplicities.
Consequently the coefficients of the ordered program are constant on the
stated orbits.  Identifying variables and constraints along these orbits gives
the quotient program.  Conversely, a quotient solution lifts by assigning
its orbit value to every ordered variable, and the lifted solution has the
same coverage and cost.
\end{proof}

\begin{remark}
Theorem~\ref{thm:finite-state-reduction} proves finite reducibility, not
feasibility.  Choosing a useful exact-density template library and finding a
feasible rational state vector remain the substantive steps.
\end{remark}

\subsection{Exact certificate verification}
\label{subsec:certificate-verification}

The finite certificates below are checked in three ways.  For a rational
local-cover certificate, the verifier regenerates the template incidences and
the complete capped state space from the definitions in the paper, parses the
stored rational vector, and checks nonnegativity, every coverage inequality,
and every coefficient of the asserted cost identity.  For a fixed-degree
orbit certificate it similarly reconstructs all partition-orbit rows and
checks the stored primal or dual vector over $\mathbb Q$.

For the finite nonexistence claims, a deterministic encoder constructs the
Boolean formula from the displayed combinatorial reduction.  The retained
reverse-unit-propagation (RUP) or deletion-resolution-asymmetric-tautology
(DRAT) derivation is then accepted only if a proof checker derives the empty
clause.  The DRAT format and the DRAT-trim checker used here are described
in~\cite{WetzlerHeuleHunt2014}.  The SAT solver's status is not a proof input.
Completeness comes
from the finite-state theorem for capped profiles and from the explicit
symmetry reductions stated in the exceptional finite cases.  The public
companion gives a theorem-to-artifact index, immutable file checksums, exact
verification commands, and their expected outputs
~\cite{ZabokritskiyCompanion2026}.

\begin{table}[htbp]
\centering
\caption{The central proof-grade computations.  File-level identifiers,
checksums, and commands are recorded in the cited companion.}
\label{tab:certificate-map}
\small
\begin{tabularx}{\linewidth}{@{}l >{\raggedright\arraybackslash}X
 >{\raggedright\arraybackslash}X@{}}
\toprule
result&finite reduction&independent check\\
\midrule
$\alpha_5(6)=42$&one CNF with $1594$ variables and $19581$ clauses
 &RUP derivation with $1043$ additions\\
$\alpha_7(5)=66$&fifty exhaustive type profiles; forty-six nontrivial CNFs
 &DRAT refutations; four direct contradictions\\
$\alpha_7(6)=133$&fifteen CNFs and the three remaining exact-integer cases
 &DRAT refutations and a deterministic exhaustive checker\\
$q=7$, $d\ge26$&$1092$ rational variables, $3003$ saturated rows, and
 $1547$ transition profiles&exact verification over $\mathbb Q$\\
\bottomrule
\end{tabularx}
\end{table}

\section{Three variables: planar local covers}
\label{sec:q3frac}

The ternary case is the first exact application of
Theorem~\ref{thm:finite-state-reduction}.  It gives a new proof of the
Geramita--Gregory--Roberts formula and displays, in two dimensions, the same
boundary-state mechanism used in the subsequent cases.
Specializing the additive coloring above
to \(A=\mathbb Z_3\) gives
\begin{equation}
 \alpha_3(d)\ge
 \left\lceil\frac{(d+1)(d+2)}6\right\rceil.
 \label{eq:q3-lower-frac}
\end{equation}
The degrees two and four are exceptional: their independence numbers are
three and six, respectively.

For degrees divisible by three the extremal set has a particularly attractive
geometric form.  Figure~\ref{fig:q3-machacek-pattern} shows the degree-twelve
example displayed by Machacek: the red vertices form the unique maximum
independent set.  Machacek's argument proves uniqueness by forcing the boundary
pattern and then moving inward~\cite{Machacek2021}.  We use the picture only as
geometric motivation.  The proof below is different: it bounds the size by a
weighted cover of translated smaller simplices and does not use uniqueness.

\begin{figure}
\centering
\begin{tikzpicture}[scale=.49]
\def\hh{0.8660254}
\foreach \aone in {0,...,12} {
  \pgfmathtruncatemacro{\remaining}{12-\aone}
  \foreach \atwo in {0,...,\remaining} {
    \pgfmathtruncatemacro{\athree}{12-\aone-\atwo}
    \pgfmathsetmacro{\xx}{0.5*(\athree-\atwo)}
    \pgfmathsetmacro{\yy}{\hh*\aone}
    \ifnum\aone>0
      \draw[black!58,line width=.3pt]
        (\xx,\yy)--({\xx-0.5},{\yy-\hh});
      \draw[black!58,line width=.3pt]
        (\xx,\yy)--({\xx+0.5},{\yy-\hh});
    \fi
    \ifnum\atwo>0
      \draw[black!58,line width=.3pt] (\xx,\yy)--({\xx+1},\yy);
    \fi
  }
}
\foreach \aone in {0,...,12} {
  \pgfmathtruncatemacro{\remaining}{12-\aone}
  \foreach \atwo in {0,...,\remaining} {
    \pgfmathtruncatemacro{\athree}{12-\aone-\atwo}
    \pgfmathtruncatemacro{\residue}{mod(\atwo+2*\athree,3)}
    \pgfmathsetmacro{\xx}{0.5*(\athree-\atwo)}
    \pgfmathsetmacro{\yy}{\hh*\aone}
    \ifnum\residue=0
      \filldraw[fill=red!82,draw=black,line width=.3pt]
        (\xx,\yy) circle[radius=.13];
    \fi
  }
}
\node[below left] at (-6,0) {$x_2^{12}$};
\node[below right] at (6,0) {$x_3^{12}$};
\node[above] at (0,{12*\hh}) {$x_1^{12}$};
\end{tikzpicture}
\caption{The unique maximum independent set of $G_3(12)$, shown in red.
It has $31$ vertices and is the zero class of
$a_2+2a_3\pmod 3$; compare~\cite[Figure~3]{Machacek2021}.}
\label{fig:q3-machacek-pattern}
\end{figure}

The residual degrees used in the eventual cover are
\begin{equation}
 \mathcal R_3=\{1,5,7,8\}.
 \label{eq:q3-residual-set}
\end{equation}
Exact finite computations give
\[
\begin{array}{c|rrrr}
 r&1&5&7&8\\ \hline
 |\Delta_3(r)|&3&21&36&45\\
 \alpha_3(r)&1&7&12&15
\end{array}
\]
and hence
\begin{equation}
 \alpha_3(r)=\frac{|\Delta_3(r)|}{3}
 \qquad(r\in\mathcal R_3).
 \label{eq:q3-exact-density}
\end{equation}
Equation~\eqref{eq:q3-exact-density} is the reason for choosing these tiles.
The additive coloring forces every tile to have independence density at least
$1/3$, and the weighted local-cover bound charges a tile of weight $w$ the
cost $w\alpha(H)=w|V(H)|/3$.  Hence an exact cover has total cost
$|\Delta_3(d)|/3$, while a cover whose total overcoverage is below three still
gives the required integer upper bound through~\eqref{eq:q3-defect}.  A tile
of density greater than $1/3$ would introduce a loss before the boundary is
considered.  Among the full simplices of residual degree at most eight, the
four displayed degrees are exactly those of density $1/3$; degrees two and
four are genuine exceptions, and the remaining degrees fail the necessary
divisibility condition.  The finite-state system then shows that translates
of these four shapes provide enough freedom near the boundary to keep the
defect below three.  The exact rational vector below certifies their
sufficiency; no uniqueness or minimality is claimed.

For \(a\in\Delta_3(d-r)\), the translate
\[
 a+\Delta_3(r)
\]
induces a copy of \(G_3(r)\) inside \(G_3(d)\).  Figure~\ref{fig:q3-translated-simplex}
shows a copy of \(G_3(5)\) inside \(G_3(12)\).  The anchor fixes a common
background exponent vector, while the residual degree-five mass moves inside
the smaller triangle.

\begin{figure}
\centering
\begin{tikzpicture}[scale=.56]
  \fill[blue!10]
    (4,1.73205) -- (9,1.73205) -- (6.5,6.06218) -- cycle;

  \foreach \i in {0,...,12}{
    \pgfmathtruncatemacro{\Jmax}{12-\i}
    \foreach \j in {0,...,\Jmax}{
      \pgfmathtruncatemacro{\k}{12-\i-\j}
      \pgfmathsetmacro{\xx}{\j+0.5*\k}
      \pgfmathsetmacro{\yy}{0.8660254*\k}
      \ifnum\i>0
        \pgfmathsetmacro{\xa}{\j+1+0.5*\k}
        \pgfmathsetmacro{\ya}{0.8660254*\k}
        \draw[gray!35,line width=.25pt] (\xx,\yy)--(\xa,\ya);
        \pgfmathsetmacro{\xb}{\j+0.5*(\k+1)}
        \pgfmathsetmacro{\yb}{0.8660254*(\k+1)}
        \draw[gray!35,line width=.25pt] (\xx,\yy)--(\xb,\yb);
      \fi
      \ifnum\j>0
        \pgfmathsetmacro{\xc}{\j-1+0.5*(\k+1)}
        \pgfmathsetmacro{\yc}{0.8660254*(\k+1)}
        \draw[gray!35,line width=.25pt] (\xx,\yy)--(\xc,\yc);
      \fi
      \fill[gray!58] (\xx,\yy) circle (0.7pt);

      \ifnum\i>1
        \ifnum\j>2
          \ifnum\k>1
             \fill[blue!75] (\xx,\yy) circle (1.15pt);
          \fi
        \fi
      \fi
    }
  }

  \draw[blue!70!black,line width=1.1pt]
    (4,1.73205) -- (9,1.73205) -- (6.5,6.06218) -- cycle;
  \node[below] at (6.5,-.45) {$G_3(12)$};
  \node[blue!70!black] at (6.5,3.15) {$a+G_3(5)$};
  \node[blue!70!black,align=center] at (9.9,4.8)
    {$a=(2,3,2)$\\$|a|=7$};
\end{tikzpicture}
\caption{A translated full simplex inside a larger triangular grid.  The
highlighted vertices are $a+\Delta_3(5)\subseteq\Delta_3(12)$, with anchor
$a=(2,3,2)$.}
\label{fig:q3-translated-simplex}
\end{figure}

Assign weights \(w_{r,a}\ge0\), and let \(\tileind_{r,a}(x)\) denote the
indicator of the fixed-vertex statement \(x\in a+\Delta_3(r)\).  Define
\[
 F(x)=\sum_{r\in\mathcal R_3}
 \sum_{a\in\Delta_3(d-r)}w_{r,a}\tileind_{r,a}(x).
\]
If \(F(x)\ge1\) for every vertex, then every independent set \(I\) satisfies
\[
 |I|\le\sum_{r,a}w_{r,a}\alpha_3(r).
\]
Using \eqref{eq:q3-exact-density} and changing the order of summation gives
\begin{equation}
 \operatorname{cost}(F)-\frac{|\Delta_3(d)|}{3}
 =\frac13\sum_{x\in\Delta_3(d)}(F(x)-1).
 \label{eq:q3-defect}
\end{equation}
Thus total overcoverage below three proves the sharp checksum upper bound.

Set the anchor cap to five and let
\[
 \tau(a)=\operatorname{sort}(\min(a_1,5),\min(a_2,5),\min(a_3,5)).
\]
For \(d\ge21\), every anchor associated with a residual degree at most eight
has at least one capped coordinate.  There are twenty-one anchor states and
therefore
\[
 4\cdot21=84
\]
variables \(z_{r,\tau}\).  Capping vertex coordinates at
\(5+8=13\) produces 105 saturated vertex states.  The unsaturated profiles
have all coordinates at most twelve; among them, only 167 have total degree
at least twenty-one.  The cost is a quadratic polynomial in \(d\), so the
coefficients of \(d^2\) and \(d\) are matched exactly and the constant excess
is minimized.

\begin{proposition}[Ternary eventual certificate]
\label{prop:q3-eventual}
There is a nonnegative rational state vector with forty-two positive
coordinates such that all 105 saturated-state inequalities and all 167
relevant unsaturated inequalities hold, the two nonconstant cost coefficients
match exactly, and
\[
 \operatorname{cost}(F)-\frac{|\Delta_3(d)|}{3}
 =\delta_3=\frac57<1
 \qquad(d\ge21).
\]
Consequently the total coverage defect is \(3\delta_3<3\).
\end{proposition}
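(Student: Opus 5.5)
The plan is to instantiate Theorem~\ref{thm:finite-state-reduction}, in the sorted form of Corollary~\ref{cor:symmetry-quotient}, with $q=3$, the library $\{\Delta_3(r):r\in\mathcal R_3\}$, cap $C=5$, $R=8$, $D=13$, and $d_0=21$, and then to exhibit and verify an explicit rational state vector. The first step is to set up the variables $z_{r,\tau}$, one for each $r\in\mathcal R_3$ and each sorted capped anchor $\tau\in\{0,\dots,5\}^3$ (nondecreasing). There are $\binom{8}{3}=56$ nondecreasing triples in total. The states with no capped coordinate are excluded, since for $d\ge21$ and $r\le8$ we have $|a|=d-r\ge13>3\cdot4$. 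This leaves exactly $56-\binom{7}{3}=21$ states, and it also shows that every anchor has a capped coordinate, so no $k=0$ contributions arise.

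Next I generate the coverage rows from \eqref{eq:state-coverage-formula}. For each sorted vertex state $\sigma\in\{0,\dots,13\}^3$ with some coordinate equal to $13$ (105 of them), the row sums $z_{r,\tau_5(x-u)}$ over $r\in\mathcal R_3$ and $u\in\Delta_3(r)$ with $u\le x$. This sum depends only on $\sigma$, by the argument in the proof of Theorem~\ref{thm:finite-state-reduction}. For the 167 unsaturated profiles with coordinates at most $12$ and $|x|\ge21$, I compute the same sum directly. The cost comes from \eqref{eq:general-state-multiplicity}: a state with $k$ capped coordinates contributes $\alpha_3(r)z_{r,\tau}$ times $\binom{d-r-|t|+k-1}{k-1}$ times the number of ordered realizations of $\tau$. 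This gives a quadratic in $d$. I require its $d^2$ and $d$ coefficients to equal those of $\binom{d+2}{2}/3$, namely $1/6$ and $1/2$. The remaining constant then equals $\operatorname{cost}(F)-|\Delta_3(d)|/3$, because the comparison polynomial has constant term $1/3$.

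The main step is to produce the certificate. I would solve the resulting linear program (minimize $\delta'$ subject to coverage, nonnegativity, and the coefficient equalities) in exact rational arithmetic, by simplex over $\mathbb Q$, or by floating-point solving followed by rational reconstruction of the optimal basis. I expect the optimum to be $\delta_3=5/7$ with a basic solution of 42 positive coordinates, and I would record that vector. Verifying it is routine but must be done exactly: check nonnegativity, all $105+167$ inequalities, and the two coefficient identities. For $d\ge21$ there are no transition degrees, since $d_1=d_0$ already works.

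The conclusion then follows from \eqref{eq:q3-defect}. The identity $\operatorname{cost}(F)-|\Delta_3(d)|/3=\frac13\sum_x(F(x)-1)$ gives a total defect of $3\delta_3=15/7<3$. The real obstacle is not this verification but whether the LP optimum actually falls below $1$. Corner states such as $(0,0,13)$ are covered only by simplices touching that corner, and they force overcoverage. If the optimum were too large, I would enlarge the cap or the residual set; the paper's assertion indicates that $C=5$ with $\mathcal R_3$ suffices.
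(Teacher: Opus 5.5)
Your proposal is correct and takes the paper's approach: instantiate the finite-state reduction with $C=5$ and $D=13$, use the $21$ sorted anchor states and the $105$ saturated and $167$ unsaturated rows, match the $d^2$ and $d$ coefficients, and check a stored rational vector exactly, with \eqref{eq:q3-defect} then giving total defect $15/7<3$. One justification needs fixing: $d_1=d_0=21$ is false in the theorem's sense, since $(5,5,5)$ is infeasible for $r\in\{7,8\}$ at $d=21$ and for $r=8$ at $d=22$. No transition checks are needed, however, because $|t|\le 12+k$ gives $n=d-r-|t|\ge 1-k$, and $\binom{n+k-1}{k-1}$ vanishes for $-(k-1)\le n\le -1$, so it equals the true multiplicity for every $d\ge21$.
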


\begin{proof}
The archived verifier reconstructs all $84$ candidate variables from the four
residual degrees and the $21$ sorted anchor states.  It then checks the stored
rational vector against all $105$ saturated and $167$ unsaturated coverage
rows and checks the two nonconstant cost coefficients and the constant
$5/7$, all over $\mathbb Q$.  This is precisely the finite system obtained
above, so the exact checks prove the proposition; see the computational
companion~\cite{ZabokritskiyCompanion2026}.
\end{proof}

For the eighteen nonexceptional degrees below twenty-one, the same verifier
reconstructs the finite rational covers and checks every incidence and cost.
Degrees two and four are handled by the direct constructions below.  Thus no
maximum-independent-set solver status is used in the proof.

For $d=2$, the three corners $\{2e_i:i\in[3]\}$ form an independent set.
For the matching upper bound, give unit weight to each of the three cliques
$e_i+\Delta_3(1)$.  They cover every vertex and have total cost three.

For $d=4$, the six vertices
\[
 \{4e_i:i\in[3]\}\cup
 \{2e_i+2e_j:1\le i<j\le3\}
\]
form an independent set.  For the upper bound, give weight one to the three
cliques $3e_i+\Delta_3(1)$ and weight $1/2$ to each of the six cliques
$2e_i+e_j+\Delta_3(1)$, $i\ne j$.  Profiles of types $(4)$, $(3,1)$,
$(2,2)$, and $(2,1,1)$ receive coverage at least $1$, $3/2$, $1$, and $1$,
respectively, and the total cost is
\[
 3+6\cdot\frac12=6.
\]

\begin{theorem}[Exact ternary spreading number]
\label{thm:q3frac}
For every \(d\ge1\),
\[
 \alpha_3(d)=
 \begin{cases}
 3,&d=2,\\
 6,&d=4,\\
 \displaystyle\left\lceil\frac{(d+1)(d+2)}6\right\rceil,
 &d\notin\{2,4\}.
 \end{cases}
\]
\end{theorem}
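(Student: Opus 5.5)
The proof splits into lower and upper bounds, and both are essentially assembled from results already stated in this section.

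\emph{Lower bound.} For $d\notin\{2,4\}$ we use the additive coloring~\eqref{eq:q3-lower-frac}: the cyclic checksum fibers with $A=\mathbb Z_3$ are independent, so the largest fiber has at least $\lceil (d+1)(d+2)/6\rceil$ vertices. For $d=2$ and $d=4$ we use the explicit independent sets exhibited just before the theorem. These are the three corners $2e_i$, and the six profiles $4e_i$, $2e_i+2e_j$. To check independence, note that any two of these profiles differ in $\ell^1$ norm by at least $4$.

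\emph{Upper bound, eventual range $d\ge21$.} We invoke Proposition~\ref{prop:q3-eventual}. Feasibility of the capped state vector implies, by the equivalence in Theorem~\ref{thm:finite-state-reduction}, that for every $d\ge21$ the lifted weights on translates $a+\Delta_3(r)$, $r\in\mathcal R_3$, satisfy $F(x)\ge1$ for every $x\in\Delta_3(d)$. The same theorem gives that the cost equals $|\Delta_3(d)|/3+5/7$. The only subtle point is the passage from sorted states to ordered ones, which is handled by Corollary~\ref{cor:symmetry-quotient}. Theorem~\ref{thm:weighted-tile-bound} then gives
\[
 \alpha_3(d)\le\frac{(d+1)(d+2)}6+\frac57 .
\]
Since $\alpha_3(d)$ is an integer, it remains to check that the right-hand side is less than $\lceil (d+1)(d+2)/6\rceil+1$. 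This holds because $(d+1)(d+2)/6$ has fractional part $0$ or $1/3$ (as $(d+1)(d+2)$ is always even), and $1/3+5/7<2$. More precisely, if $N=|\Delta_3(d)|$, then $N/3+5/7<\lceil N/3\rceil+1$, so $\alpha_3(d)\le\lceil N/3\rceil$.

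\emph{Upper bound, finite range.} For the nonexceptional degrees $d<21$, we cite the archived exact rational covers built from the same tiles. Each such cover has cost strictly below $\lceil N/3\rceil+1$, so the same integrality argument applies. For $d=1$ the single clique $\Delta_3(1)$ already gives $\alpha_3(1)\le1$. For $d=2$ and $d=4$ we verify the displayed clique covers by hand.
\begin{itemize}
\item For $d=2$: every profile $e_i+e_j$ or $2e_i$ lies in some clique $e_k+\Delta_3(1)$, and the total cost is $3$.
\item For $d=4$: we confirm the listed coverages by type. The profile $4e_i$ lies only in $3e_i+\Delta_3(1)$. The profile $3e_i+e_j$ lies in $3e_i+\Delta_3(1)$ and in $2e_i+e_j+\Delta_3(1)$. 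The profile $2e_i+2e_j$ lies in $2e_i+e_j+\Delta_3(1)$ and $2e_j+e_i+\Delta_3(1)$, giving coverage $1/2+1/2=1$. The profile $2e_i+e_j+e_k$ lies in $2e_i+e_j$ and $2e_i+e_k$ translates. The cost is $3+6\cdot\tfrac12=6$.
\end{itemize}

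The main obstacle is not conceptual but lies in the computational certificates: the eventual state vector and the eighteen small-degree covers. Everything else is routine. I would first re-verify the integrality step and the symmetry lift carefully, since those are the only places where a proof-level gap could hide.
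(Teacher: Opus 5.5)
Your proposal is correct and follows the paper's proof essentially verbatim. The cyclic coloring and the explicit sets give the lower bounds, Proposition~\ref{prop:q3-eventual} with integrality handles $d\ge21$, and the archived finite covers together with the hand-checked clique covers at $d=2,4$ handle the remaining degrees; as a minor aside, evenness of $(d+1)(d+2)$ alone does not rule out a fractional part $2/3$, but your direct inequality $N/3+5/7<\lceil N/3\rceil+1$ already suffices, so nothing is lost.
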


\begin{proof}
The additive coloring gives the lower bound outside degrees two and four.
Proposition~\ref{prop:q3-eventual} gives the matching upper bound for
$d\ge21$, and the finite covers described above handle the remaining degrees.
The two exceptional values follow from their direct constructions.
\end{proof}

\section{Four variables: parity and boundary corrections}
\label{sec:q4frac}

We next apply the local-tile viewpoint to four variables.  This case has a
feature that does not occur for \(q=3\) or \(q=5\): the exact answer depends
on the parity of the degree.  Odd degrees admit an integral tiling by
\(K_4\)'s, while even degrees are handled by a finite-state fractional cover
using only the two full simplices \(G_4(1)\) and \(G_4(3)\).  These are not
two unrelated techniques: the odd construction is precisely the exact
\(F\equiv1\) specialization of the same LP framework, whereas the even case
requires nontrivial state-dependent weights.
For \(k\ge0\), write
\[
 M_4(2k+1)=\frac{(k+1)(k+2)(2k+3)}6
\]
and
\[
 M_4(2k)=\binom{k+3}{3}+\binom{k+1}{3}
        =\frac{(k+1)^3+2(k+1)}3.
\]
Equivalently, for even \(d\),
\begin{equation}
 M_4(d)=\frac{d^3}{24}+\frac{d^2}{4}+\frac{5d}{6}+1.
 \label{eq:q4-even-polynomial}
\end{equation}

Label the four coordinates by the elements of
\(H=\mathbb F_2^2\), and define
\[
 \chi_4(a)=\sum_{h\in H}a_h\,h,
 \qquad a=(a_h)_{h\in H}\in\Delta_4(d).
\]
A unit transfer changes the color by a nonzero element of \(H\), so every
fiber of \(\chi_4\) is independent.

If \(d=2k\), the zero fiber consists precisely of the vectors whose four
coordinates are all even or all odd.  Hence
\begin{equation}
 |\chi_4^{-1}(0)|
 =\binom{k+3}{3}+\binom{k+1}{3}
 =M_4(2k).
 \label{eq:q4-even-lower}
\end{equation}
For odd degree the four fibers have equal size, and therefore each has size
\[
 \frac14|\Delta_4(d)|=M_4(d).
\]

\subsection{Odd degree: an integral local tiling}
Let \(d\) be odd and let \(C_0=\chi_4^{-1}(0)\).  The parity pattern of a
vertex in \(C_0\) is either
\[
 (1,0,0,0)
 \qquad\text{or}\qquad
 (0,1,1,1),
\]
where the first coordinate is indexed by \(0\in H\).

For a center \(c\in C_0\) of the first type, define
\[
 Q(c)=\{c-e_0+e_h:h\in H\}.
\]
For a center of the second type, define
\[
 Q(c)=\{c+e_0-e_h:h\in H\}.
\]
The two orientations are dictated by parity rather than selected by a linear program.
In the first parity orbit the distinguished \(0\)-coordinate is the unique
odd coordinate, so one moves a unit out of that coordinate; in the second it
is the unique even coordinate, so the move is reversed.  Every vector of odd
coordinate sum has either one or three odd coordinates.  Its parity pattern
therefore determines the orientation and the transferred coordinate, after
which the vector itself determines the unique zero-color center.  Thus the two
parity orbits force exactly the two orientations needed for the partition.
Each \(Q(c)\) is a \(K_4\), and the resulting vertex sets form the disjoint
union
\begin{equation}
 \Delta_4(d)=\bigsqcup_{c\in C_0}Q(c).
 \label{eq:q4-odd-tiling}
\end{equation}
Every independent set meets each tile in at most one vertex, whereas \(C_0\)
meets every tile exactly once.  Therefore
\begin{equation}
 \alpha_4(d)=\frac14\binom{d+3}{3}=M_4(d)
 \qquad(d\text{ odd}).
 \label{eq:q4-odd-value}
\end{equation}
This is an integral local-tile certificate.  It uses the two orientations of
a degree-one simplex: the first family consists of ordinary translates of
\(G_4(1)\), while the second is the reflected orientation.  In the language
of the preceding terminology, all selected weights are one
and \(F\equiv1\).  Thus the associated linear program is solved by inspection: it is the
zero-defect case of the general framework.

\subsection{Even degree: exact-density templates}
For even degree we use ordinary translated full simplices only.  Set
\begin{equation}
 \mathcal R_4=\{1,3\}.
 \label{eq:q4-residual-set}
\end{equation}
The two local capacities are
\[
\begin{array}{c|cc}
 r&1&3\\ \hline
 |\Delta_4(r)|&4&20\\
 \alpha_4(r)&1&5
\end{array}
\]
and hence
\begin{equation}
 \alpha_4(r)=\frac{|\Delta_4(r)|}{4}
 \qquad(r\in\mathcal R_4).
 \label{eq:q4-exact-density}
\end{equation}

Here again the density condition is the first filter.  The degree-one tile is
the smallest local \(K_4\), while degree three is the smallest thicker full
simplex with the same optimal density.  Together, these two scales give the
state weights enough freedom near faces and lower-dimensional strata to
reproduce the unavoidable linear boundary excess in
\eqref{eq:q4-boundary-excess}.  The rational certificate below proves that
they suffice.  Their sufficiency was found by the finite-state search; no
uniqueness or minimality of \(\{1,3\}\) is asserted.

Assign a nonnegative weight \(w_{r,a}\) to every translate
\(a+\Delta_4(r)\), and let \(\tileind_{r,a}(x)\) be the indicator of
\(x\in a+\Delta_4(r)\).  Then
\[
 F(x)=
 \sum_{r\in\mathcal R_4}
 \sum_{a\in\Delta_4(d-r)}w_{r,a}\tileind_{r,a}(x).
\]
If \(F(x)\ge1\) for every vertex, then every independent set satisfies
\[
 |I|\le\operatorname{cost}(F)
 :=\sum_{r,a}w_{r,a}\alpha_4(r).
\]
As before, exact local density gives the defect identity
\begin{equation}
 \operatorname{cost}(F)-\frac14|\Delta_4(d)|
 =\frac14\sum_{x\in\Delta_4(d)}(F(x)-1).
 \label{eq:q4-defect}
\end{equation}
Unlike the odd case, the optimal even value is larger than the average by a
linear boundary term:
\begin{equation}
 M_4(d)-\frac14|\Delta_4(d)|
 =\frac{3d}{8}+\frac34
 \qquad(d\text{ even}).
 \label{eq:q4-boundary-excess}
\end{equation}
Thus the purpose of the linear program is not to make the defect bounded, but
to reproduce this exact boundary slack.

Set the anchor cap to \(C=3\) and define
\[
 \tau(a)=\operatorname{sort}
 \bigl(\min(a_1,3),\ldots,\min(a_4,3)\bigr).
\]
There are
\[
 \binom{6}{3}=20
\]
anchor states, and hence \(2\cdot20=40\) variables
\(z_{r,\tau}\).  Capping vertex coordinates at
\(C+\max\mathcal R_4=6\) gives \(84\) saturated states.  The certificate
also checks the \(18\) unsaturated sorted profiles with entries at most five
and total at least fifteen.
The anchor-state multiplicities are polynomials of degree at most three in
\(d\).  We match the coefficients of \(d^3,d^2,d\) to those in
\eqref{eq:q4-even-polynomial} and minimize the constant coefficient.

\begin{proposition}[Exact quaternary even-degree certificate]
\label{prop:q4-even-eventual}
There is a nonnegative rational state vector with twenty-nine positive coordinates
such that all \(84+18\) state inequalities hold and
\begin{equation}
 \operatorname{cost}(F)
 =\frac{d^3}{24}+\frac{d^2}{4}+\frac{5d}{6}+1
 \label{eq:q4-even-exact-cost}
\end{equation}
for every \(d\ge15\).  For every even \(d\ge16\), this polynomial is
exactly \(M_4(d)\), and hence gives the sharp upper bound.
\end{proposition}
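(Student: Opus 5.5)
The proof will follow the pattern of Proposition~\ref{prop:q3-eventual}: instantiate Theorem~\ref{thm:finite-state-reduction} with the symmetry quotient of Corollary~\ref{cor:symmetry-quotient}, then check a stored rational vector in exact arithmetic.

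\emph{Step 1: fix the data.} The library is the two full simplices $\Delta_4(1)$ and $\Delta_4(3)$. The cap is $C=3$, so $R=3$ and the vertex cap is $D=6$. Weights depend on the sorted capped anchor, giving $2\cdot 20=40$ variables $z_{r,\tau}$. The coverage of a vertex $x$ is given by \eqref{eq:state-coverage-formula}. As in the proof of Theorem~\ref{thm:finite-state-reduction}, it depends only on the sorted capped state $\sigma_6(x)$ whenever some $x_i\ge6$. This yields the $84$ saturated rows.

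\emph{Step 2: control the residual profiles.} An unsaturated vertex has all entries at most five, so its degree is at most $20$. We impose its row only for $15\le|x|\le20$, which gives the $18$ listed sorted profiles. We must also confirm that every anchor state with a capped coordinate is feasible once $d\ge15$. An anchor for $r=3$ has degree $d-3\ge12$, and one for $r=1$ has degree at least $14$. A four-tuple with entries at most two sums to at most $8$, so every anchor has a coordinate that reaches the cap. Hence $d_1=15$ works, with no transition degrees and no uncapped ($k=0$) anchor states.

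\emph{Step 3: the cost polynomial.} For a sorted state $t$ with $k\ge1$ capped coordinates, count the ordered anchors by multiplying the number of orderings of $t$ by $\binom{d-r-|t|+k-1}{k-1}$. This is \eqref{eq:general-state-multiplicity}. Weighting by $\alpha_4(1)=1$ and $\alpha_4(3)=5$ makes $\cost(F_d)$ a cubic in $d$ whose coefficients are linear forms in the $z$'s. We require these coefficients to equal those of \eqref{eq:q4-even-polynomial}, including the constant $1$. This is a system of four linear equations.

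\emph{Step 4: certify.} A feasible nonnegative rational solution with $29$ positive entries is found by solving the linear program exactly, for example by minimizing the constant term subject to matching the three leading coefficients. Its optimum must be exactly $1$. The verifier then rechecks all $102$ inequalities and the four coefficient identities over $\mathbb Q$, as in Section~\ref{subsec:certificate-verification}.

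\emph{Step 5: conclude.} For even $d\ge16$, \eqref{eq:q4-even-polynomial} equals $M_4(d)$. Theorem~\ref{thm:weighted-tile-bound} then gives $\alpha_4(d)\le M_4(d)$, which matches \eqref{eq:q4-even-lower}.

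The main obstacle is that the certificate must be exact, with defect exactly the linear excess \eqref{eq:q4-boundary-excess} and no room for slack. By \eqref{eq:q4-defect}, the overcoverage must total precisely $\tfrac{3d}{2}+3$. This forces the coverage on faces and edges to be tuned exactly, and in particular it requires coefficient matching rather than a merely bounded defect. Because the identity must hold for all $d\ge15$, including odd $d$, the boundary weights must work uniformly in both parities. If the LP optimum of the constant term came out above $1$, I would first try enlarging the cap to $C=4$ before changing the tile library.
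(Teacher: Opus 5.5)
Your proposal is correct and follows essentially the same route as the paper. You apply the finite-state reduction with anchor cap $C=3$ and vertex cap $6$ to the library of full simplices of degrees $\{1,3\}$, which gives $40$ state variables, $84$ saturated rows, $18$ unsaturated rows, and four cost-coefficient identities, and the stored rational vector is then checked over $\mathbb Q$. Your extra checks are also correct and are implicit in the paper's verifier: every anchor has a capped coordinate once $d\ge15$ (so there are no transition degrees), and the cubic equals $M_4(d)$ for even $d$.
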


\begin{proof}
The archived verifier reconstructs all forty state variables and the
twenty-nine positive rational entries.  It checks the $84$ saturated and $18$
unsaturated rows, the direct tile reconstruction, and every coefficient of
\eqref{eq:q4-even-exact-cost} over $\mathbb Q$.  Hence the displayed vector is
a cover with the asserted exact cost for every degree in the stated range;
see~\cite{ZabokritskiyCompanion2026}.
\end{proof}

For the remaining even degrees
\[
 d=2,4,6,8,10,12,14,
\]
the companion contains seven rational local-cover certificates using the
same two residual degrees, with costs
\[
 4,11,24,45,76,119,176,
\]
respectively.  The verifier reconstructs all $135$ finite state-orbit rows
and checks the incidences and costs exactly.  These costs are precisely
$M_4(d)$.

\begin{theorem}[Exact quaternary spreading number]
\label{thm:q4frac}
For every \(k\ge0\),
\[
 \alpha_4(2k+1)
 =\frac{(k+1)(k+2)(2k+3)}6,
\]
and
\[
 \alpha_4(2k)
 =\binom{k+3}{3}+\binom{k+1}{3}.
\]
\end{theorem}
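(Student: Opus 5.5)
The proof splits by parity of the degree, and in each case sets a lower bound from a fiber of the additive coloring $\chi_4$ against an upper bound from a weighted local cover via Theorem~\ref{thm:weighted-tile-bound}.

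\emph{Odd degrees.} For $d=2k+1$ the four fibers of $\chi_4$ have equal size. This holds because, in odd degree, translating by a fixed transfer permutes the colors compatibly; alternatively, the partition~\eqref{eq:q4-odd-tiling} shows that every fiber meets every tile exactly once. Either way, $\alpha_4(d)\ge\frac14\binom{d+3}{3}$. For the upper bound I would verify~\eqref{eq:q4-odd-tiling} directly. Take a vertex $x$ of odd degree. It has either one or three odd coordinates, and its color $\chi_4(x)$ is a nonzero $h$ or zero. Undoing a single transfer involving the coordinate $0$ then produces a unique center $c\in C_0$ with $x\in Q(c)$, and the two parity types determine which orientation is used. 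Since each $Q(c)$ is a $K_4$, unit weights give a cover with $F\equiv1$ and cost $|C_0|=\frac14\binom{d+3}{3}$. A direct computation shows that this equals $(k+1)(k+2)(2k+3)/6$.

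\emph{Even degrees.} For $d=2k$, the count~\eqref{eq:q4-even-lower} gives the zero fiber size $\binom{k+3}3+\binom{k+1}3$. Here the point is that $\chi_4(a)=0$ exactly when all coordinates have the same parity, and writing $a=2b$ or $a=\mathbf1+2b$ reduces this to counting $\Delta_4(k)$ and $\Delta_4(k-2)$. The upper bound splits into two ranges.

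\begin{itemize}
\item For even $d\ge16$, I would invoke Proposition~\ref{prop:q4-even-eventual}. It supplies a fractional cover of cost $d^3/24+d^2/4+5d/6+1$, and I would check algebraically that this equals $((k+1)^3+2(k+1))/3=M_4(2k)$, by substituting $d=2k$ and comparing with~\eqref{eq:q4-even-polynomial}.
\item For $d\in\{2,4,\dots,14\}$, I would use the seven finite certificates, whose costs $4,11,24,45,76,119,176$ coincide with $M_4(d)$ for $k=1,\dots,7$. The case $d=0$ is trivial.
\end{itemize}

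In both ranges Theorem~\ref{thm:weighted-tile-bound} turns the cover into $\alpha_4(d)\le M_4(d)$, which matches the lower bound.

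The main obstacle is the even case. The bound there is not of defect type: the cover must reproduce exactly the linear excess~\eqref{eq:q4-boundary-excess}, so it rests on the exact rational certificate. That certificate is needed both to make the reduction in Theorem~\ref{thm:finite-state-reduction} cover all states and to make the cost coefficients match. By contrast, the odd-case partition is a short parity argument.
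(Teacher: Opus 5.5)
Your proposal is correct and follows the paper's proof: additive-coloring lower bounds, the integral two-orientation $K_4$ tiling in odd degree, Proposition~\ref{prop:q4-even-eventual} for even $d\ge16$, the seven finite rational certificates for $2\le d\le14$, and the trivial case $d=0$. One small point: your first justification of equal fiber sizes in odd degree ("translating by a fixed transfer") does not work as stated. It is also unnecessary, since each tile $Q(c)$ carries all four colors ($c\mp e_0\pm e_h$ has color $h$), and in any case pigeonhole already gives the lower bound $\tfrac14\binom{d+3}{3}$.
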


\begin{proof}
The additive coloring gives the lower bounds.  For odd degree, the integral
\(K_4\)-tiling \eqref{eq:q4-odd-tiling} gives the matching upper bound.  For
even \(d\ge16\), Proposition~\ref{prop:q4-even-eventual} gives
\(|I|\le M_4(d)\).  The seven exact finite covers give the same bound for the
remaining positive even degrees.  For \(d=0\), the graph has one vertex, in
agreement with the displayed formula.  Hence the additive constructions are
optimal in every degree.
\end{proof}

\begin{remark}
The parity split is intrinsic.  In odd degree the optimum equals the average
color-class size and is witnessed by an exact integral tiling using both
orientations of \(G_4(1)\).  In even degree the optimum exceeds the average by
\(3d/8+3/4\); the fractional \(\{1,3\}\)-tile cover reproduces precisely this
boundary contribution.
\end{remark}

\section{The case \texorpdfstring{$q=5$}{q=5}: exact local tiles}
\label{sec:q5}

The cyclic coloring gives
\begin{equation}
 \alpha_5(d)\ge
 \left\lceil\frac{1}{5}\binom{d+4}{4}\right\rceil.
 \label{eq:q5-lower}
\end{equation}
There are two exceptions to equality.  The sets
\[
 \{2e_i:i\in[5]\}
\]
and
\begin{equation}
 \{4e_i:i\in[5]\}
 \ \cup\ \{2e_i+2e_j:1\le i<j\le5\}
 \ \cup\ \{(0,1,1,1,1)\}
 \label{eq:q5-d4-exception}
\end{equation}
are independent and have sizes $5$ and $16$, respectively.  We prove that
these are the only exceptions.

Both parts of the proof use coordinate-permutation symmetry, so we fix the
notation before its first use.  For \(v=(v_1,\ldots,v_q)\in
\mathbb Z_{\ge0}^q\), let
\[
 \mathcal O_q(v)
 =\{(v_{\sigma(1)},\ldots,v_{\sigma(q)}):\sigma\in S_q\}
\]
be its orbit under coordinate permutations, where \(S_q\) is the symmetric
group on \([q]\).  If \(\lambda=(\lambda_1,\ldots,\lambda_k)\) is a partition
with \(k\le q\), we identify it with
\((\lambda_1,\ldots,\lambda_k,0,\ldots,0)\) and write
\(\mathcal O_q(\lambda)\); for \(k>q\), set
\(\mathcal O_q(\lambda)=\varnothing\).  Equivalently,
\(u\in\mathcal O_q(\lambda)\) exactly when its positive coordinates, arranged
in nonincreasing order, form \(\lambda\).  For example,
\[
 \mathcal O_q(2,1)=\{2e_i+e_j:i,j\in[q],\ i\ne j\},\qquad
 \mathcal O_q(2,2)=\{2e_i+2e_j:i<j\},
\]
so these sets have \(q(q-1)\) and \(\binom q2\) elements, respectively.
We abbreviate a partition with \(k\) parts equal to one by \((1^k)\).  The
same notation applies to a sorted capped anchor state
\(\tau\): \(\mathcal O_5(\tau)\) is its set of distinct coordinate
permutations.

Figure~\ref{fig:orbit-summary} gives the complete geometric example used
throughout the rest of the paper.  It first decomposes residual degree three
into coordinate orbits and then translates the same local pattern by an
anchor.  Later orbit-union tiles merely select some of these colored layers
before applying the translation.

\begin{figure}[b]
\centering
\begin{tikzpicture}[scale=.90,transform shape,every node/.style={font=\normalsize}]
\begin{scope}
  \node[font=\bfseries] at (2.0,4.52) {(a) The residual face};
  \foreach \i in {0,...,3}{
    \pgfmathtruncatemacro{\Jmax}{3-\i}
    \foreach \j in {0,...,\Jmax}{
      \pgfmathtruncatemacro{\k}{3-\i-\j}
      \pgfmathsetmacro{\xx}{1.30*(\j+0.5*\k)}
      \pgfmathsetmacro{\yy}{1.30*0.8660254*\k}
      \ifnum\i>0
        \draw[gray!38,line width=.35pt] (\xx,\yy)--++(1.30,0);
        \draw[gray!38,line width=.35pt] (\xx,\yy)--++(.65,1.1258);
      \fi
      \ifnum\j>0
        \draw[gray!38,line width=.35pt] (\xx,\yy)--++(-.65,1.1258);
      \fi
      \def\pointcolor{blue!75!black}
      \ifnum\i=3\def\pointcolor{red!78!black}\fi
      \ifnum\j=3\def\pointcolor{red!78!black}\fi
      \ifnum\k=3\def\pointcolor{red!78!black}\fi
      \ifnum\i=1\ifnum\j=1\ifnum\k=1
        \def\pointcolor{green!58!black}
      \fi\fi\fi
      \fill[\pointcolor] (\xx,\yy) circle (2.6pt);
      \node[font=\scriptsize,fill=white,fill opacity=.82,text opacity=1,
        inner sep=.45pt,anchor=north,yshift=-2.6pt]
        at (\xx,\yy) {$({\i},{\j},{\k})$};
    }
  }
  \draw[black!55,line width=.8pt] (0,0)--(3.9,0)--(1.95,3.377)--cycle;
  \fill[red!78!black] (4.25,3.05) circle (2.6pt);
  \node[anchor=west] at (4.42,3.05) {$\mathcal O_5(3)$};
  \fill[blue!75!black] (4.25,2.43) circle (2.6pt);
  \node[anchor=west] at (4.42,2.43) {$\mathcal O_5(2,1)$};
  \fill[green!58!black] (4.25,1.81) circle (2.6pt);
  \node[anchor=west] at (4.42,1.81) {$\mathcal O_5(1^3)$};
\end{scope}

\begin{scope}[xshift=7.05cm]
  \node[font=\bfseries] at (2.52,4.52) {(b) The translated face};
  \fill[blue!9]
    (0,0)--(2.16,0)--(1.08,1.8706)--cycle;
  \foreach \ii in {0,...,7}{
    \pgfmathtruncatemacro{\JJmax}{7-\ii}
    \foreach \jj in {0,...,\JJmax}{
      \pgfmathtruncatemacro{\kk}{7-\ii-\jj}
      \pgfmathsetmacro{\xx}{.72*(\jj+.5*\kk)}
      \pgfmathsetmacro{\yy}{.72*.8660254*\kk}
      \ifnum\ii>0
        \draw[gray!32,line width=.27pt] (\xx,\yy)--++(.72,0);
        \draw[gray!32,line width=.27pt] (\xx,\yy)--++(.36,.6235);
      \fi
      \ifnum\jj>0
        \draw[gray!32,line width=.27pt] (\xx,\yy)--++(-.36,.6235);
      \fi
    }
  }
  \draw[black!50,line width=.75pt] (0,0)--(5.04,0)--(2.52,4.365)--cycle;
  \draw[blue!70!black,line width=1.05pt]
    (0,0)--(2.16,0)--(1.08,1.8706)--cycle;
  \foreach \i in {0,...,3}{
    \pgfmathtruncatemacro{\Jmax}{3-\i}
    \foreach \j in {0,...,\Jmax}{
      \pgfmathtruncatemacro{\k}{3-\i-\j}
      \pgfmathsetmacro{\xx}{.72*(\j+.5*\k)}
      \pgfmathsetmacro{\yy}{.72*.8660254*\k}
      \def\pointcolor{blue!75!black}
      \ifnum\i=3\def\pointcolor{red!78!black}\fi
      \ifnum\j=3\def\pointcolor{red!78!black}\fi
      \ifnum\k=3\def\pointcolor{red!78!black}\fi
      \ifnum\i=1\ifnum\j=1\ifnum\k=1
        \def\pointcolor{green!58!black}
      \fi\fi\fi
      \fill[\pointcolor] (\xx,\yy) circle (2.35pt);
    }
  }
  \node[blue!70!black,font=\scriptsize,align=center] at (1.08,2.10)
    {one face of $a+\Delta_5(3)$};
  \node[anchor=west,font=\scriptsize,align=left] at (3.55,3.78)
    {$a=(4,0,0,1,0)$\\$a\in\Delta_5(5),\ d=8$};
\end{scope}
\end{tikzpicture}
\caption{Coordinate orbits and a translated placement.  In panel (a), each
label $(i,j,k)$ denotes $(i,j,k,0,0)\in\Delta_5(3)$; the three colors are
the orbit layers $\mathcal O_5(3)$, $\mathcal O_5(2,1)$, and
$\mathcal O_5(1^3)$.  Panel (b) adds the displayed anchor to the same orbit
layers and shows their translated face inside a coordinate slice of
$\Delta_5(8)$.  Only one two-dimensional face of the small four-dimensional
simplex $a+\Delta_5(3)$ is drawn; permuting all five coordinates generates
the complete orbits.}
\label{fig:orbit-summary}
\end{figure}
\subsection{A symmetric full-simplex cover for \texorpdfstring{$d\ge35$}{d >= 35}}

The eventual argument uses only translated smaller copies of the same graph.
Set
\begin{equation}
 \mathcal R_5=\{1,3,6,7,8,9\}.
 \label{eq:q5-residual-set}
\end{equation}
For these residual degrees the exact finite values are
\[
\begin{array}{c|rrrrrr}
 r&1&3&6&7&8&9\\ \hline
 |\Delta_5(r)|&5&35&210&330&495&715\\
 \alpha_5(r)&1&7&42&66&99&143.
\end{array}
\]
The finite arguments later in this section establish all six values.  Thus
\begin{equation}
 \alpha_5(r)=\frac{|\Delta_5(r)|}{5}
 \qquad(r\in\mathcal R_5).
 \label{eq:q5-exact-density}
\end{equation}
These are precisely the exact-density full simplices of residual degree at
most nine.  Every tile is therefore a genuine smaller discrete simplex, it
retains full coordinate symmetry, and it pays no local density penalty.

For $a\in\Delta_5(d-r)$, the translate $a+\Delta_5(r)$ induces a copy of
$G_5(r)$ inside $G_5(d)$.  Hence every independent set $I$ satisfies
\[
 |I\cap(a+\Delta_5(r))|\le\alpha_5(r).
\]
By~\eqref{eq:q5-exact-density} and the defect identity
\eqref{eq:abstract-defect}, any fractional cover by these tiles with total
overcoverage below five proves the upper bound in~\eqref{eq:q5-lower}.

Give the translate $a+\Delta_5(r)$ a weight depending only on $r$ and on
\[
 \tau(a)=\operatorname{sort}_{\uparrow}
 \bigl(\min(a_1,6),\ldots,\min(a_5,6)\bigr),
\]
where \(\operatorname{sort}_{\uparrow}\) arranges the entries in
nondecreasing order.  There are $210$ sorted anchor states and six residual
degrees, hence $1260$
variables.  Since the largest residual degree is nine, coverage is determined
by the $3876$ saturated sorted vertex profiles obtained by capping coordinates
at fifteen and by the $6021$ unsaturated sorted profiles whose coordinates are
at most fourteen and whose total is at least thirty-five.
If an anchor state $\tau$ has $k$ capped entries and residual degree $r$, then
the multiplicity of its coordinate orbit in degree $d$ is
\begin{equation}
 m_{r,\tau}(d)=|\mathcal O_5(\tau)|
 \binom{d-r-|\tau|+k-1}{k-1},
 \label{eq:q5-state-multiplicity}
\end{equation}
with the usual value zero when infeasible.  Consequently the cost of a fixed
state vector is a polynomial in $d$ of degree at most four.

\begin{proposition}[Exact full-simplex certificate]
\label{prop:q5-eventual}
There is a nonnegative rational state vector with $933$ positive coordinates
among the $1260$ variables which covers every vertex for every $d\ge35$ and
satisfies
\[
 \cost(F_d)-\frac15\binom{d+4}{4}=\delta_5,
 \qquad 0<\delta_5<1.
\]
Consequently
\[
 \alpha_5(d)=\left\lceil\frac15\binom{d+4}{4}\right\rceil
 \qquad(d\ge35).
\]
\end{proposition}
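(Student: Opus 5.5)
The plan is to instantiate Theorem~\ref{thm:finite-state-reduction} with the template library $\{\Delta_5(r):r\in\mathcal R_5\}$, cap $C=6$, and full coordinate symmetry via Corollary~\ref{cor:symmetry-quotient}. This yields the finite rational system already described: $1260$ variables $z_{r,\tau}$, one coverage row for each of the $3876$ saturated sorted vertex states (coordinates capped at $C+9=15$), and one row for each of the $6021$ unsaturated sorted profiles with entries at most $14$ and total at least $35$. For each row, the coefficient of $z_{r,\tau}$ is computed from~\eqref{eq:state-coverage-formula}. It equals the number of $u\in\Delta_5(r)$ with $u\le x$ and $\operatorname{sort}_\uparrow\tau_6(x-u)=\tau$, and by the proof of Theorem~\ref{thm:finite-state-reduction} this number depends only on the capped state of $x$. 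Threshold $35$ is chosen so that every anchor of degree $d-r\ge 35-9=26>5\cdot5$ has a coordinate at least $6$. Hence every anchor state that occurs has $k\ge1$ capped entries, the multiplicity formula~\eqref{eq:q5-state-multiplicity} applies uniformly, and there are no transition degrees.

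Next I would write the cost as $\sum_{r,\tau}\alpha_5(r)\,m_{r,\tau}(d)\,z_{r,\tau}$, using~\eqref{eq:q5-state-multiplicity}. This is a polynomial of degree at most four in $d$. I would impose linear equalities so that its coefficients of $d^4,d^3,d^2,d$ agree with those of $\frac15\binom{d+4}{4}$. The constant term difference $\delta_5$ is then a single linear form in the $z$'s, and I would minimize it subject to coverage and nonnegativity. Solving this LP in exact rational arithmetic, or solving it in floating point and rationalizing the basic solution followed by exact re-checking, should produce the stated vector with $933$ positive entries. The verifier then checks nonnegativity, all $3876+6021$ rows, the four coefficient identities, and $0<\delta_5<1$, all over $\mathbb Q$.

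Given the certificate, the conclusion follows from the defect identity~\eqref{eq:abstract-defect}, which holds because every tile has exact density $1/5$ by~\eqref{eq:q5-exact-density}. Theorem~\ref{thm:weighted-tile-bound} gives $\alpha_5(d)\le\frac15\binom{d+4}4+\delta_5<\frac15\binom{d+4}4+1$, and since $\alpha_5(d)$ is an integer, $\alpha_5(d)\le\lceil\frac15\binom{d+4}4\rceil$. The lower bound is~\eqref{eq:q5-lower}. Two caveats apply. First, the local values $\alpha_5(r)$ for $r\in\{6,7,8,9\}$ must be established independently and non-circularly. The paper defers these to the finite arguments later in the section: the RUP certificate for $\alpha_5(6)=42$, and smaller exact-density covers for the others. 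Second, the lower bound $\alpha_5(r)\ge|\Delta_5(r)|/5$ is automatic from the coloring, so only the upper bound on $\alpha_5(r)$ is needed.

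The main obstacle is feasibility itself. The reduction guarantees only a finite system, not a solution, and it is not clear in advance that full simplices alone can absorb the boundary excess with bounded defect below $1$. If the LP turned out infeasible or gave $\delta_5\ge1$, I would first raise the cap or add further exact-density residual degrees before resorting to orbit-union boundary tiles.
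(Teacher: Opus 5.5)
Your plan is the paper's proof. It uses the same cap-$6$ finite-state system ($1260$ variables, $3876+6021$ coverage rows, four matched cost coefficients), an exactly verified rational vector, and the same integrality step. As you note, the capacities $\alpha_5(r)$ for $r\in\{6,7,8,9\}$ are supplied non-circularly by the later finite arguments.

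One justification needs repair. You claim there are no transition degrees, but this does not follow from the fact that every anchor occurring for $d\ge35$ has a capped coordinate. A state with capped coordinates can still be infeasible. For example, $\tau=(6,6,6,6,6)$ with $r=9$ occurs only for $d\ge39$. So, in the sense of Theorem~\ref{thm:finite-state-reduction}, the degrees $35\le d\le38$ are transition degrees, and the paper's verifier evaluates the actual multiplicities there.

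Your conclusion is nevertheless true, by a short argument you should add. A sorted state with $k\ge1$ capped entries has $|\tau|\le 6k+5(5-k)=25+k$. Hence for $d\ge35$ the top entry $N=d-r-|\tau|+k-1$ in~\eqref{eq:q5-state-multiplicity} satisfies $N\ge d-35\ge0$. For such $N$, $\binom{N}{k-1}$ is the genuine anchor count, and it vanishes exactly when $d-r-|\tau|<0$. Therefore the coefficient-matched polynomial equals $\cost(F_d)$ for every $d\ge35$. You can establish this either by the observation above or by directly checking the four transition degrees, as the paper does.
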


\begin{proof}
The self-describing rational vector is archived in the computational
companion~\cite{ZabokritskiyCompanion2026}.  A verifier reconstructed
independently from the definitions above checks the variable ordering, all
$3876+6021$ coverage rows, and all four nonconstant coefficients of the cost
polynomial using a common integer denominator.  It obtains an exact rational constant with $0<\delta_5<1$
($\delta_5\approx0.9574841054$); the decimal is displayed only for
orientation and is not used in the proof.
The verifier also evaluates the actual multiplicities in the transition
degrees before all capped states are automatically feasible.  These exact
checks establish the asserted cover for every $d\ge35$.
\end{proof}

\subsection{Finite degrees and the isolated degree six}

Using the orbit notation fixed above, we use the following six induced
subgraphs:
\begin{equation}
\begin{array}{c|c|c|c}
H&V(H)&|V(H)|&\alpha(H)\\ \hline
J_1&\mathcal O_5(1)&5&1\\
J_2&\mathcal O_5(1^2)&10&2\\
J_3&\mathcal O_5(1^3)&10&2\\
P_3&\mathcal O_5(2,1)\cup \mathcal O_5(1^3)&30&6\\
J_4&\mathcal O_5(1^4)&5&1\\
R_{4c}&\mathcal O_5(3,1)\cup \mathcal O_5(2,1,1)\cup \mathcal O_5(1^4)&55&11.
\end{array}
\label{eq:q5-six-tiles}
\end{equation}

\begin{lemma}
\label{lem:q5-six-tiles}
The independence numbers in~\eqref{eq:q5-six-tiles} are exact.  In
particular, every one of the six templates has independence density $1/5$.
\end{lemma}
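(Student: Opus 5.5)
The lower bounds are uniform.  The cyclic coloring $\sigma_5$ of \eqref{eq:cyclic-checksum} is a proper $5$-coloring of $G_5(r)$, and each template is an induced subgraph of some $G_5(r)$ with $r\le4$.  Hence every template $H$ satisfies $\alpha(H)\ge\lceil |V(H)|/5\rceil$.  Each displayed $|V(H)|$ is divisible by five, so this gives exactly the tabulated values $1,2,2,6,1,11$.  It therefore remains to prove the matching upper bounds, and I would treat the six templates in increasing order of difficulty.

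The four pure-orbit templates reduce to elementary set systems on $[5]$.  $J_1=\mathcal O_5(1)=\Delta_5(1)$ is a $K_5$.  $J_4=\mathcal O_5(1^4)=\{\mathbf 1-e_i\}$ is the reflected $K_5$ from Section~\ref{sec:pre}.  Both therefore have independence number one.
\begin{itemize}
\item For $J_2$, two vertices $e_i+e_j$ and $e_k+e_l$ are adjacent exactly when the pairs share one element.  An independent set is thus a family of pairwise disjoint $2$-subsets of $[5]$, and it has at most $\lfloor5/2\rfloor=2$ members.
\item For $J_3$, two triples are adjacent exactly when they share two elements.  Any two $3$-subsets of $[5]$ meet, so an independent family consists of triples meeting pairwise in exactly one point.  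Take $\{1,2,3\}$ and $\{1,4,5\}$.  A third triple must avoid $1$, since otherwise it meets one of these two in two points.  It would then contain two elements of $\{2,3\}$ or two of $\{4,5\}$, which is again impossible.  Hence $\alpha(J_3)=2$.
\end{itemize}

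For the two mixed-layer templates $P_3$ and $R_{4c}$ I would first look for fractional clique covers and apply Theorem~\ref{thm:weighted-tile-bound}.  The natural cliques are the translates $b+\Delta_5(1)$ intersected with the template, and the reflected cliques $b+\{\mathbf 1-e_i\}$ intersected with it.  For $P_3$ the relevant anchors are $b=e_i+e_j$ and $b=2e_i$.  They give the $5$-cliques $\{2e_i+e_j,\ e_i+2e_j,\ e_i+e_j+e_k\}$ and the $4$-cliques $\{2e_i+e_j:j\ne i\}$.  Weights depending only on the orbit type reduce the covering condition to two inequalities, one for each layer, so the best cost is a two-variable linear program.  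For $R_{4c}$ the analogous anchors are $b\in\mathcal O_5(2,1)$, $b\in\mathcal O_5(1^3)$ and $b\in\mathcal O_5(3)$, giving cliques in $a+\Delta_5(1)$ restricted to the three layers.  This yields a three-row orbit LP, exactly in the spirit of Corollary~\ref{cor:symmetry-quotient}.

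The main obstacle is that a pure clique cover may not reach $6$ and $11$.  Fractional clique bounds on such vertex-transitive-like layers often give only the fractional chromatic-type value, which can exceed the target.  If that happens I would fall back on exact enumeration, which is tiny here: $30$ and $55$ vertices, reduced further by the $S_5$ symmetry.  The concrete claims to refute are the existence of independent sets of size $7$ in $P_3$ and of size $12$ in $R_{4c}$.
\begin{itemize}
\item Fix by symmetry the orbit type of the intersection with the densest layer.  For example, $I\cap\mathcal O_5(1^3)$ has at most $2$ elements by the $J_3$ argument, and $I\cap\mathcal O_5(1^4)$ at most one element.
\item Count in the remaining layer using the cliques above.
\item Any residual cases would go through the deterministic exhaustive checker, or a RUP refutation, as in Section~\ref{subsec:certificate-verification}.
\end{itemize}
Combining these upper bounds with the coloring lower bounds gives the six exact values.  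Since each value equals $|V(H)|/5$, every template has density $1/5$.
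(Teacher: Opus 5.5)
\textbf{There is a genuine gap in the proof for $R_{4c}$.} Your lower bounds are correct, and so are your arguments for $J_1,J_2,J_3,J_4$. The two mixed templates carry the real content of the lemma, but you leave them as a plan, and for $R_{4c}$ the plan cannot work.

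\textbf{$P_3$.} Here your two-variable LP actually suffices. Put weight $u$ on the ten $5$-cliques anchored at $e_i+e_j$ and weight $v$ on the five $4$-cliques anchored at $2e_i$. The constraints are $u+v\ge1$ and $3u\ge1$. The optimum is $20/3$, at $(u,v)=(1/3,2/3)$, and integrality then gives $\alpha(P_3)\le6$. You needed to compute this and apply rounding.

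\textbf{$R_{4c}$: the clique approach fails.} No clique cover reaches $11$. Every clique of $G_5(4)$ lies in some $b+\Delta_5(1)$ or some $Q_b^-$. Give weight $1/4$ to each vertex of type $(3,1)$ or $(2,1,1)$ and weight $1/8$ to each vertex of type $(1^4)$. This puts total weight at most one on every such clique. Hence every fractional clique cover costs at least $105/8$. Your three-row LP attains exactly this value, with weights $5/8,3/8,1/4$ on anchors of types $(3),(2,1),(1^3)$, so it yields only $\alpha(R_{4c})\le13$. Your fallback fails as well. If you discard the $(1^4)$ layer and count the rest with the same cliques, the same weighting gives a clique bound of $25/2>10$. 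What remains is an unspecified exhaustive search, which is not a proof.

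\textbf{The missing idea is a non-clique tile.} Group the fifty vertices of types $(3,1)$ and $(2,1,1)$ by the coordinate $i$ that carries the entry at least two. In the $i$th group:
\begin{itemize}
\item the six words $2e_i+e_j+e_k$ induce $J(4,2)$, whose independent sets are matchings of $K_4$, so at most two can be chosen;
\item the four words $3e_i+e_j$ form a clique;
\item $3e_i+e_j$ is adjacent to every $2e_i+e_j+e_k$, so once it is chosen the surviving pairs lie in a three-set, pairwise intersect, and admit only one more word.
\end{itemize}
Each group therefore contributes at most two, and the $K_5$ layer $\mathcal O_5(1^4)$ contributes at most one. This gives $\alpha(R_{4c})\le 5\cdot2+1=11$.

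This is the paper's argument. It is an integral cover by five $10$-vertex tiles of capacity two plus one $K_5$, and that is exactly why it beats the clique bound. The paper handles $P_3$ by a similar direct count rather than an LP: at most one $(2,1)$-word per doubled coordinate, at most two triples, and two triples meeting in coordinate $i$ block all four words $2e_i+e_j$.
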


\begin{proof}
$J_1$ and $J_4$ are $K_5$.  An independent set in $J_2$ is a matching in
$K_5$, and complementation identifies $J_3$ with the same Johnson graph.
For $P_3$, at most one word $2e_i+e_j$ can be chosen for each fixed doubled
coordinate $i$.  At most two triples can be chosen; if two are chosen, they
meet in one coordinate and permit at most four compatible words of type
$(2,1)$.  Thus $\alpha(P_3)\le6$.

For $R_{4c}$, fix the coordinate containing the large entry.  The selected
words of types $(3,1)$ and $(2,1,1)$ contribute at most two in each of the
five coordinate layers: the latter induce $J(4,2)$, and selecting a word of
type $(3,1)$ lowers their capacity from two to one.  The $J_4$ layer adds at
most one, so $\alpha(R_{4c})\le11$.  Conversely, on every template the cyclic
checksum has five equinumerous independent fibers, since its degree is
nonzero modulo five.  This gives all matching lower bounds.
\end{proof}

Put
\[
 B_5(d)=
 \begin{cases}
  5,&d=2,\\
  16,&d=4,\\
  \left\lceil\binom{d+4}{4}/5\right\rceil,&\text{otherwise}.
 \end{cases}
\]

\begin{proposition}[Finite rational certificates]
\label{prop:q5-finite}
For every $2\le d\le29$ with $d\ne6$, there is a rational fractional cover
by translates of the six templates in~\eqref{eq:q5-six-tiles} whose cost is
strictly smaller than $B_5(d)+1$.  Consequently
$\alpha_5(d)\le B_5(d)$ in this range.
\end{proposition}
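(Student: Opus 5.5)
The proposition is a finite statement: for each of the $27$ degrees $2\le d\le29$, $d\ne6$, we must exhibit one rational vector of weights. The plan is therefore to reduce each degree to an explicit finite linear program and to certify a feasible point in exact arithmetic, exactly as was done for the ternary and quaternary finite ranges.

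\emph{Tile family and weights.} For a fixed $d$, the tile family consists of all translates $a+V(H)$ with $H$ one of the six templates of~\eqref{eq:q5-six-tiles}, of residual degree $r_H\in\{1,2,3,3,4,4\}$, and $a\in\Delta_5(d-r_H)$. By Lemma~\ref{lem:q5-six-tiles} and~\eqref{eq:translate-preserves-alpha}, each translate has independence number $|V(H)|/5$. Theorem~\ref{thm:weighted-tile-bound} then applies verbatim. Since every template is invariant under $S_5$, Corollary~\ref{cor:symmetry-quotient} lets us take weights depending only on $(H,\operatorname{sort}(a))$. This gives one variable per pair (template, partition of $d-r_H$ into at most five parts), and one coverage row per sorted vertex profile in $\Delta_5(d)$. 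Each row coefficient counts, for a fixed representative $x$ of the vertex orbit, the anchors $a=x-u$ with $u\in V(H)$, $u\le x$, and $\operatorname{sort}(a)$ equal to the given state. The cost is $\sum |\mathcal O_5(\tau)|\,z_{H,\tau}\,|V(H)|/5$. No capping is needed because $d$ is bounded, so the system is genuinely finite and small, with at most a few thousand orbit rows even at $d=29$.

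\emph{Target and certification.} Because every template has exact density, the defect identity~\eqref{eq:abstract-defect} gives $\cost=\tfrac15(|\Delta_5(d)|+E(F))$. For the generic degrees we therefore need total overcoverage $E(F)<5\bigl(B_5(d)+1\bigr)-\binom{d+4}{4}$, which is a positive slack between $1$ and $5$. For $d=2$ and $d=4$ the target $B_5$ exceeds the average, so more overcoverage is tolerated; the $d=2$ case is even immediate by hand, using unit weights on the five cliques $e_i+\Delta_5(1)$, which gives cost $5$. For each $d$ I would solve the LP minimizing cost, then round to a nearby rational vertex solution. The verifier then checks nonnegativity, every orbit coverage inequality, and the strict cost inequality over $\mathbb Q$, following the procedure of Section~\ref{subsec:certificate-verification}. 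The conclusion $\alpha_5(d)\le B_5(d)$ follows because $\alpha_5(d)$ is an integer below $B_5(d)+1$.

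\emph{Main obstacle.} The hard part is existence, not verification. Near the boundary and at low degree, full simplices of degree $\ge6$ do not fit, and the only exact-density shapes available are these thin orbit-layer templates. The real question is whether they are flexible enough to keep the overcoverage below the small slack, especially at the corners $de_i$ and along the edges. Here the templates $J_4$ and $R_{4c}$, which can touch a vertex of type $(d)$ only through anchors like $(d-4)e_i+\ldots$, must be balanced against $J_1$ and $P_3$. If the LP optimum for some degree fails the threshold, I would first enlarge the family only by adding further translates of the same six shapes; no new templates are needed, since anchors already range over everything. The exclusion of $d=6$ signals that this family genuinely fails there, presumably because the slack is tightest. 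That case is left to the separate RUP argument, so I would not expect a rational certificate for it and would not attempt one.
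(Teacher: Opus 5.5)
Your proposal follows the paper's proof. For each of the $27$ degrees one exhibits an exact rational (symmetrized) weight vector on translates of the six exact-density templates, verifies every coverage row and the strict inequality $\cost<B_5(d)+1$ over $\mathbb Q$, and concludes by Theorem~\ref{thm:weighted-tile-bound} and integrality. Two side remarks are inaccurate but harmless: the admissible overcoverage $5(B_5(d)+1)-\binom{d+4}{4}$ is $5$ or $9$ according as $5\nmid d$ or $5\mid d$ (and $15$ for $d\in\{2,4\}$), not a number between $1$ and $5$; and $J_4$, $R_{4c}$ (indeed every template except $J_1$) never meet a corner $de_i$, since all their residual vectors have support at least two, so the corners are covered only by the $J_1$ translates anchored at $(d-1)e_i$.
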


\begin{proof}
The companion stores one rational vector for each of the $27$ degrees.  The
verifier reconstructs every translated-tile incidence and checks all
coverage inequalities and costs as integer numerators over the stored common
denominator.  The weighted local-cover bound and integrality give the
conclusion~\cite{ZabokritskiyCompanion2026}.
\end{proof}

The five transition degrees $30\le d\le34$ are closed by a second exact
certificate using only $J_1,J_2,P_3,R_{4c}$.  It has $840$ state variables,
$401$ positive entries, $715$ saturated and $110$ unsaturated profile rows,
and exact defect $4/5$.  Its verifier reconstructs every row and the cost
identity over $\mathbb Q$.  Although this compact vector is valid for all
$d\ge30$, here it is used only for the five transition degrees so that the
structurally simpler full-simplex construction remains the eventual proof
\cite{ZabokritskiyCompanion2026}.

It remains to settle degree six.  Replacing a possible selected neighbor of a
corner $6e_i$ by the corner shows that a maximum independent set may be
assumed to contain all five corners and no word of type $(5,1)$.  A
hypothetical independent set of size $43$ would therefore leave an independent
set $J$ of size $38$ on the remaining $185$ vertices.

The five translated-tile families below cover these $185$ vertices exactly.  For each displayed anchor type $\lambda$, every anchor $a\in\mathcal O_5(\lambda)$ is used:
\begin{equation}
\begin{array}{c|c|c}
\text{tile}&\text{anchor type }\lambda&\text{weight}\\ \hline
J_1&(4,1)&1/3\\
J_1&(3,2)&1/2\\
J_1&(2,2,1)&1/6\\
J_2&(3,1)&1/6\\
J_2&(2,1,1)&1/6.
\end{array}
\label{eq:q5-d6-cover}
\end{equation}
Its cost is $115/3$.  After multiplication by six, its total capacity is
$230$, whereas the $38$ selected vertices consume $228$.  If $x_\lambda$
denotes the number of selected words of type $\lambda$, the five nonnegative
integral capacity slacks are
\begin{align*}
 s_{41}&=20-(x_{42}+2x_{411}),\\
 s_{32}&=20-(x_{42}+2x_{33}+x_{321}),\\
 s_{221}&=30-(x_{321}+3x_{222}+2x_{2211}),\\
 s_{31}&=40-(x_{42}+2x_{411}+x_{321}+3x_{3111}),\\
 s_{211}&=60-(x_{321}+3x_{222}+3x_{3111}
                    +4x_{2211}+6x_{21111}),
\end{align*}
and exact coverage gives
\begin{equation}
 2s_{41}+3s_{32}+s_{221}+s_{31}+s_{211}=2.
 \label{eq:q5-d6-slack}
\end{equation}
In particular,
\begin{equation}
 s_{32}=0,
 \qquad x_{42}+2x_{33}+x_{321}=20.
 \label{eq:q5-d6-shadow}
\end{equation}

\begin{lemma}
\label{lem:q5-d6-reduction}
Every $J$ of size $38$ satisfying~\eqref{eq:q5-d6-slack} contains a word of
type $(4,2)$.
\end{lemma}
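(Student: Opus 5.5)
We argue by contradiction: assume $x_{42}=0$, and use the slack identity~\eqref{eq:q5-d6-slack} together with the adjacency structure of the $185$ remaining vertices until the configuration is either impossible or small enough to rule out directly. The remaining vertices split into the types $(4,2)$, $(4,1,1)$, $(3,3)$, $(3,2,1)$, $(3,1,1,1)$, $(2,2,2)$, $(2,2,1,1)$, $(2,1,1,1,1)$, with orbit sizes $20,30,10,60,20,10,30,5$.

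First we use parity. With $x_{42}=0$ we have $s_{41}=20-2x_{411}$, which is even. Since $2s_{41}\le 2$ by~\eqref{eq:q5-d6-slack}, this forces $s_{41}=0$, so $x_{411}=10$. Every $J_1$-tile with anchor $4e_i+e_j$ is a $K_5$ containing $4e_i+e_j+e_k$ for the three $k\ne i,j$. Each word $4e_i+e_j+e_k$ lies in exactly two such tiles, and there are twenty tiles. So $x_{411}=10$ means every tile is hit exactly once. Consequently, for each $i$ the selected pairs $\{j,k\}$ form a perfect matching $\mu_i$ of $[5]\setminus\{i\}$.

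Next we propagate this rigid structure to the neighbouring types.
\begin{itemize}
\item A word $3e_i+e_j+e_k+e_l$ is adjacent to $4e_i+e_a+e_b$ for each pair $\{a,b\}\subseteq\{j,k,l\}$. Every $3$-subset of a $4$-set contains an edge of a perfect matching on that $4$-set. Hence $x_{3111}=0$.
\item A word $3e_i+2e_a+e_b$ is adjacent to $4e_i+e_a+e_b$. Hence it is excluded whenever $\{a,b\}\in\mu_i$, which leaves eight admissible words of type $(3,2,1)$ for each $i$.
\end{itemize}
Substituting $x_{3111}=0$ into the remaining slacks shows that the two equations ($|J|=38$ and $s_{32}=0$) are consistent but do not by themselves give a contradiction. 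So counting alone cannot finish the proof and a finer local analysis is needed.

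For the finer analysis we exploit the tight equalities. From~\eqref{eq:q5-d6-shadow}, $2x_{33}+x_{321}=20$, and the remaining slack $s_{221}+s_{31}+s_{211}=2$ is now distributed as $s_{31}=20-x_{321}$ together with $s_{221}$ and $s_{211}$ read off from the definitions above. We look at the $J_1$-tiles with anchors $3e_i+2e_j$. Each such tile contains $(3,3)$, $(4,2)$, and $(3,2,1)$ words, and all of them are saturated since $s_{32}=0$. Each anchor-$(3,2)$ tile must therefore contain exactly one selected word. Combined with the forbidden pairs from the matchings $\mu_i$, this should force, for each ordered pair $(i,j)$, a specific selected word. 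We then check these forced words for compatibility: two forced $(3,2,1)$ words $3e_i+2e_j+e_k$ and $3e_i+2e_k+e_j$ are adjacent, and so are $3e_i+2e_j+e_k$ and $3e_k+2e_j+e_i$. These adjacencies should contradict the $(3,2)$-saturation or push $s_{31}$ above $2$.

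The main obstacle is this last step. The five matchings $\mu_1,\dots,\mu_5$ admit $3^5$ choices. Up to the $S_5$-action there are only a few orbits, but the interaction with the $(3,3)$ words (each covering two $(3,2)$-tiles) must be tracked carefully. I would first try to show that $x_{33}\ge1$ is impossible, because a word $3e_i+3e_j$ kills both $3e_i+2e_j+e_k$ and $3e_j+2e_i+e_k$ for all $k$. That leaves the case $x_{33}=0$, $x_{321}=20$, where exactly one $(3,2,1)$ word is chosen per $(3,2)$-tile. If a hand argument for that case stalls, we would enumerate the matching-orbits by a deterministic exhaustive check of the forced $(3,2,1)$ patterns, in the same spirit as the other finite checks in Section~\ref{subsec:certificate-verification}.
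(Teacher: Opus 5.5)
Your first steps are sound. The parity argument gives $s_{41}=0$ and $x_{411}=10$. The twenty $(4,1)$-anchored cliques are then each hit exactly once, so the selected $(4,1,1)$ words form perfect matchings $\mu_i$, and this forces $x_{3111}=0$. The paper reaches the same point through the layerwise bound $x_{411}+x_{3111}\le10$.

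The gap comes afterwards. You conclude that counting cannot finish the proof, and you replace it by a local analysis that is never carried out. Its steps are hedged (``should force'', ``should contradict''), and it ends in an unspecified exhaustive search. Two of its ingredients are also wrong:
\begin{itemize}
\item Nothing is forced in the tile at $3e_i+2e_j$. After $\mu_i$ excludes one word, the $(3,3)$ word and two $(3,2,1)$ words remain.
\item The words $3e_i+2e_j+e_k$ and $3e_k+2e_j+e_i$ are not adjacent, since they differ by $2e_i-2e_k$.
\end{itemize}

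The missing ingredient is one more elementary clique. The five words $2e_i+\sum_{j\ne i}e_j$ of type $(2,1^4)$ pairwise differ by a unit transfer, so $x_{21111}\le1$. With $x_{42}=x_{3111}=0$ and $x_{411}=10$, the size equation becomes $x_{33}+x_{321}+x_{222}+x_{2211}+x_{21111}=28$. Subtracting the shadow equation $2x_{33}+x_{321}=20$ gives $x_{222}+x_{2211}+x_{21111}=8+x_{33}$. Hence $x_{222}+x_{2211}\ge7+x_{33}$, and
\[
 x_{321}+3x_{222}+2x_{2211}\ge(20-2x_{33})+2(7+x_{33})=34>30.
\]
So $s_{221}<0$, contradicting the capacity bound of the thirty $(2,2,1)$-anchored cliques. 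This is exactly how the paper closes the argument.

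You are right that the slack system by itself is consistent. For instance, $x_{33}=x_{222}=0$, $x_{321}=20$, $x_{2211}=5$, $x_{21111}=3$ satisfies the size equation and all five slack conditions. However, $s_{221}\ge0$ forces $x_{21111}\ge3$ in every such solution, which the $(2,1^4)$ clique forbids. So no analysis of the matchings $\mu_i$, and no enumeration, is needed.
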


\begin{proof}
First, $x_{411}+x_{3111}\le10$: in each fixed large-coordinate layer the
type-$(4,1,1)$ vertices induce $J(4,2)$ and contribute at most two, while a
selected type-$(3,1,1,1)$ vertex lowers their capacity to one.  Summing over
the five layers proves the claim.

Suppose $x_{42}=0$.  Since $s_{41}$ is even and
\eqref{eq:q5-d6-slack} gives $2s_{41}\le2$, we get $x_{411}=10$.
Equation~\eqref{eq:q5-d6-shadow} becomes $x_{321}+2x_{33}=20$.  If
$x_{3111}=0$, the size equation gives
$x_{222}+x_{2211}+x_{21111}=8+x_{33}$.  The last profile type, $\mathcal O_5(2,1^4)$, is a clique, so
$x_{21111}\le1$, which forces $s_{221}<0$.  If $x_{3111}=1$, then
$s_{31}=2x_{33}-3$; hence $0\le s_{31}\le2$ forces $x_{33}=2$ and
$x_{321}=16$.  The size equation now gives
$x_{222}+x_{2211}+x_{21111}=9$, again forcing $s_{221}<0$.  Thus
$x_{3111}\ge2$, contrary to $x_{411}+x_{3111}\le10$.
\end{proof}

By coordinate symmetry we may fix $(0,0,0,2,4)\in J$.  Encode the remaining
question as a Boolean formula with one variable for each of the $185$
vertices.  It has the adjacency clauses, a standard totalizer requiring at
least $38$ selected vertices, the twenty labeled shadow clauses forced by
\eqref{eq:q5-d6-shadow}, and the unit clause selecting $(0,0,0,2,4)$.  The
resulting CNF has $1594$ variables, including the totalizer auxiliaries, and
$19581$ clauses.

\begin{proposition}[Certified degree-six refutation]
\label{prop:q5-d6-rup}
The reduced CNF is unsatisfiable.  It has a reverse-unit-propagation proof of
$1043$ clause additions ending with the empty clause.
\end{proposition}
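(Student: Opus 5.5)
The statement is a finite Boolean claim, so the proof is a certified refutation rather than a conceptual argument. I would first make the CNF explicit and deterministic, so that any RUP derivation for it proves the intended combinatorial statement.

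\emph{Variables and adjacency clauses.} There is one Boolean variable for each of the $185$ profiles of $\Delta_5(6)$ other than the five corners and the twenty words of type $(5,1)$. For every pair of such profiles at $\ell_1$-distance two there is a binary clause $\neg v_x\vee\neg v_y$.

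\emph{Cardinality and shadow clauses.} A standard totalizer (sequential counter) encodes $\sum v_x\ge38$; its auxiliaries bring the count to $1594$ variables. The shadow condition~\eqref{eq:q5-d6-shadow} is imposed as the twenty labeled clauses that it forces. By Lemma~\ref{lem:q5-d6-reduction} and the slack identity~\eqref{eq:q5-d6-slack}, these clauses are consequences of $|J|=38$ together with the exact cover~\eqref{eq:q5-d6-cover}. Hence adding them does not exclude any hypothetical $J$.

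\emph{Symmetry-breaking unit clause.} The unit clause for $(0,0,0,2,4)$ is justified as follows. Lemma~\ref{lem:q5-d6-reduction} guarantees that some word of type $(4,2)$ lies in $J$. The group $S_5$ acts transitively on $\mathcal O_5(4,2)$ and preserves the remaining vertex set, the adjacencies and the shadow family. So we may permute coordinates to place that word at $(0,0,0,2,4)$. One must check that the shadow clauses are written in an $S_5$-equivariant labeled form, so the permutation maps the clause set onto itself. I expect this to be the subtle point of the reduction.

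\emph{Soundness chain.} Suppose $\alpha_5(6)\ge43$. Corner replacement gives a maximum independent set containing all corners and no $(5,1)$ words. This leaves a set $J$ of size $38$ on the $185$ remaining vertices, and $J$ satisfies the slack identity. The lemma and the symmetry argument then produce a satisfying assignment of the CNF.

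\emph{Unsatisfiability.} This is established by the archived RUP proof, a sequence of $1043$ lemma clauses. An independent checker verifies each one: adding its negation and performing unit propagation on the current clause database must yield a conflict. The final step derives the empty clause. The solver's own output plays no role; only the checker's acceptance of the derivation, run on the CNF regenerated by the deterministic encoder, is used.

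\emph{Main obstacle.} The real difficulty lies upstream: making the instance small enough for a short proof to exist. The shadow clauses and the fixed $(4,2)$ word are what make this possible. The formal risk is an encoding mismatch, for example a wrong totalizer direction or a mislabeled shadow clause. To guard against this, I would regenerate the CNF from the definitions and compare its checksum with the archived file before running the checker.
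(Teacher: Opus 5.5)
Your proposal is correct and matches the paper's proof. Like the paper, you rest everything on the deterministic encoder, the archived $1043$-step RUP derivation, and a standalone checker that validates each addition by unit propagation, with the discovering solver outside the trust chain; your soundness chain reproduces the reduction the paper states just before the proposition. Two small precisions: the twenty shadow clauses follow from the slack identity alone (integrality together with $3s_{32}\le 2$ forces $s_{32}=0$), so Lemma~\ref{lem:q5-d6-reduction} is needed only to justify the unit clause; and a totalizer is a different encoding from a sequential counter, which matters for matching the stated $1594$ variables and hence the archived derivation.
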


\begin{proof}
The companion contains the deterministic encoder, the generated formula, the
retained RUP derivation, and a standalone checker.  The checker validates each
of the $1043$ additions by unit propagation against the preceding formula and
accepts only after deriving the empty clause.  Thus the solver that discovered
the refutation is outside the trusted proof chain
\cite{ZabokritskiyCompanion2026}.
\end{proof}

It follows that $\alpha_5(6)\le42$; equality follows from the cyclic coloring,
since $|\Delta_5(6)|=210$.

\begin{theorem}[Exact five-variable spreading number]
\label{thm:q5}
For every $d\ge1$,
\[
 \alpha_5(d)=
 \begin{cases}
  5,&d=2,\\
  16,&d=4,\\
  \displaystyle\left\lceil\frac15\binom{d+4}{4}\right\rceil,
     &d\notin\{2,4\}.
 \end{cases}
\]
\end{theorem}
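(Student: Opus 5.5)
The proof assembles lower and upper bounds from results already established in this section, splitting by degree.

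\emph{Lower bounds.} For $d\notin\{2,4\}$ the cyclic coloring gives~\eqref{eq:q5-lower}. For $d=2$ and $d=4$ the explicit independent sets $\{2e_i\}$ and~\eqref{eq:q5-d4-exception}, of sizes $5$ and $16$, give $\alpha_5(d)\ge B_5(d)$. For $d=4$ I will briefly check independence of~\eqref{eq:q5-d4-exception}: distinct words of types $(4)$, $(2,2)$, $(1^4)$ are never at $\ell_1$-distance two. Indeed, $(2,2)$ words differ pairwise by distance $4$, and $(0,1,1,1,1)$ is at distance at least $4$ from the others.

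\emph{Upper bounds.} For $d=1$ the graph is $K_5$, so $\alpha_5(1)=1=\lceil 5/5\rceil$. For $2\le d\le 29$ with $d\ne6$, Proposition~\ref{prop:q5-finite} gives $\alpha_5(d)\le B_5(d)$ via integrality, since the cost is below $B_5(d)+1$. For $d=6$, the corner-exchange reduction, the exact cover~\eqref{eq:q5-d6-cover}, Lemma~\ref{lem:q5-d6-reduction}, and the certified refutation of Proposition~\ref{prop:q5-d6-rup} together exclude an independent set of size $43$. Hence $\alpha_5(6)\le42=210/5$. For $30\le d\le34$ the compact certificate with defect $4/5$ applies. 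Via~\eqref{eq:abstract-defect}, its overcoverage is $4<5$, so $\alpha_5(d)\le\lceil\binom{d+4}{4}/5\rceil$. For $d\ge35$, Proposition~\ref{prop:q5-eventual} gives the bound directly.

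\emph{Circularity.} The one point needing care is that the eventual certificate uses the local values $\alpha_5(r)$ for $r\in\mathcal R_5=\{1,3,6,7,8,9\}$ in~\eqref{eq:q5-exact-density}. These must be established before invoking Proposition~\ref{prop:q5-eventual}. They come from the finite range: $r=1$ is trivial, $r=6$ is the refutation, and $r=3,7,8,9$ follow from Proposition~\ref{prop:q5-finite}, matched by the coloring lower bound since $5\mid|\Delta_5(r)|$. None of these finite certificates uses the full-simplex tiles; they use only the six templates of Lemma~\ref{lem:q5-six-tiles}, whose independence numbers are proved directly. So the dependency order is acyclic: Lemma~\ref{lem:q5-six-tiles}, then the finite degrees, then degree six, then $d\ge35$.

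I expect the main obstacle to be bookkeeping rather than new mathematics. First, the proof must confirm that every degree $d\ge1$ falls into exactly one covered range. Second, it must confirm that in the exceptional degrees the upper bound $B_5(d)$ meets the constructed sets exactly: $5$ at $d=2$ and $16$ at $d=4$, both of which exceed the checksum value $\lceil\binom{d+4}{4}/5\rceil$ ($3$ and $14$ respectively). Combining the matching bounds in each range then yields the stated formula.
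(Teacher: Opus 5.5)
Your proposal is correct and follows the paper's proof essentially step for step: the same split into $d=1$, Proposition~\ref{prop:q5-finite} for $2\le d\le29$ with $d\ne6$, the cover-plus-RUP argument for $d=6$, the defect-$4/5$ transition certificate for $30\le d\le34$, and Proposition~\ref{prop:q5-eventual} for $d\ge35$, with your acyclicity check making explicit the paper's remark that the finite arguments supply the six local values in~\eqref{eq:q5-exact-density}. One negligible slip: two distinct $(2,2)$ words are at distance $4$ or $8$, not always $4$, which still suffices for independence.
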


\begin{proof}
The graph $G_5(1)$ is $K_5$.  For larger degrees the additive coloring and
the two displayed exceptional sets give the lower bounds.
Proposition~\ref{prop:q5-finite} gives the upper bounds for
$2\le d\le29$, $d\ne6$; Proposition~\ref{prop:q5-d6-rup} and the preceding
reduction handle $d=6$; the transition certificate handles $30\le d\le34$;
and Proposition~\ref{prop:q5-eventual} handles every $d\ge35$.
\end{proof}

\section{The case \texorpdfstring{$q=7$}{q=7}: the complete solution}
\label{sec:q7}

The cyclic checksum fibers give the lower bound
\[
 L_7(d):=\left\lceil\frac17\binom{d+6}{6}\right\rceil.
\]
Every vector used in this section has seven coordinates.  Let
$e_1,\ldots,e_7$ be the standard unit vectors of $\mathbb Z^7$.  We use the
coordinate-orbit notation fixed at the start of Section~\ref{sec:q5}; thus
$\mathcal O_7(\lambda)$ is the set of seven-coordinate permutations of the
partition $\lambda$ padded by zeros.

In the eventual three- and five-variable proofs, translated full residual
simplices provided enough local information.  For seven variables they do not
distinguish the boundary layers finely enough to obtain the sharp bound in all
degrees.  We therefore decompose each small simplex $\Delta_7(r)$ into its
coordinate-permutation orbits $\mathcal O_7(\lambda)$ and build a tile by
taking a union of selected orbits.  This preserves the full symmetry among
the seven coordinates while allowing different boundary profiles to be
covered separately.  The translation mechanism is exactly the one shown for
$q=5$ in Figure~\ref{fig:orbit-summary}: for a fixed anchor $a$ and an orbit
union $U\subseteq\Delta_7(r)$, the placement satisfies
$a+U\subseteq a+\Delta_7(r)\subseteq\Delta_7(d)$.  The orbit unions defined
next provide the extra flexibility needed for the complete seven-variable
cover.

We now define the tiles before summarizing them.  Let
\[
\begin{aligned}
 U_1&=\mathcal O_7(1),&
 U_2&=\mathcal O_7(1^2),\\
 U_3&=\mathcal O_7(1^3),&
 U_4&=\mathcal O_7(2,1)\cup\mathcal O_7(1^3),\\
 U_5&=\mathcal O_7(1^4),&
 U_6&=\mathcal O_7(3,1)\cup\mathcal O_7(2,1,1)
       \cup\mathcal O_7(1^4),\\
 U_7&=\mathcal O_7(1^5),&
 U_8&=\mathcal O_7(1^6),\\
 U_9&=\mathcal O_7(3)\cup\mathcal O_7(2,1),&
 U_{10}&=\mathcal O_7(4)\cup\mathcal O_7(3,1),\\
 U_{11}&=\mathcal O_7(3,1)\cup\mathcal O_7(2,1,1),\\
 U_{12}&=\mathcal O_7(3,1)\cup\mathcal O_7(2,2)
          \cup\mathcal O_7(2,1,1),\\
 U_{13}&=\Delta_7(5).
\end{aligned}
\]
Their residual degrees are
\[
 (r_1,\ldots,r_{13})=(1,2,3,3,4,4,5,6,3,4,4,4,5),
\]
and the corresponding base tiles are
\[
 H_t=G_7(r_t)[U_t],
 \qquad
 \mathcal H_7=\{H_1,\ldots,H_{13}\}.
\]
This is the same construction used in the preceding sections.  For each
$t$ and each anchor $a\in\Delta_7(d-r_t)$, the placed tile has vertex set
$a+U_t$, and the full family of placements is
\begin{equation}
 \mathcal T_7(d)=
 \{a+H_t:1\le t\le13,\ a\in\Delta_7(d-r_t)\}.
 \label{eq:q7-placed-family}
\end{equation}
Both $a$ and every $u\in U_t$ have length seven; only their total masses are
$d-r_t$ and $r_t$.  Orbit notation records the pattern of the residual vector
$u$ and does not shorten the anchor or change the ambient dimension.  The
symbol $a$ is local to a placement: different tile degrees have their own
anchor sets, and every admissible anchor is used.  We later assign a weight
$w_{t,a}$ to each placement and group equal weights only after capping and
sorting the seven coordinates of $a$.

For a concrete example, $U_4$ is the union of the $42$ vectors of type
$(2,1)$ and the $\binom73=35$ vectors of type $(1^3)$, so $H_4$ has $77$
vertices.  The more elaborate degree-four set is
\[
\begin{aligned}
 U_{12}
 ={}&\{3e_i+e_j:i\ne j\}
 \cup\{2e_i+2e_j:i<j\}\\
 &\cup\{2e_i+e_j+e_k:i,j,k\text{ distinct},\ j<k\}.
\end{aligned}
\]
Its three parts have $42$, $21$, and $105$ vertices.  A placement of this
tile is $a+U_{12}$ with $a\in\Delta_7(d-4)$: the same seven-coordinate
background vector $a$ is added to every residual vector displayed above.

The first eight orbit-union tiles capture the natural low-degree boundary
layers, but the uniform certificate used below also needs five corrections.
The tiles $H_9,\ldots,H_{13}$ retain independence density $1/7$ and add the
corner, edge, and interior scales needed by the single all-residue-class
cover.  The following table collects the definitions and local capacities.

\begin{table}[htbp]
\centering
\caption{The family $\mathcal H_7$.  The line separates the first eight tiles
from the five boundary corrections.}
\label{tab:q7-tiles}
\begin{tabular}{@{}c c l r r@{}}
\toprule
tile&degree $r$&profile types in $U$&$|U|$&$\alpha(G_7(r)[U])$\\
\midrule
$H_1$&1&$(1)$&7&1\\
$H_2$&2&$(1^2)$&21&3\\
$H_3$&3&$(1^3)$&35&7\\
$H_4$&3&$(2,1)\cup(1^3)$&77&11\\
$H_5$&4&$(1^4)$&35&7\\
$H_6$&4&$(3,1)\cup(2,1,1)\cup(1^4)$&182&26\\
$H_7$&5&$(1^5)$&21&3\\
$H_8$&6&$(1^6)$&7&1\\
\midrule
$H_9$&3&$(3)\cup(2,1)$&49&7\\
$H_{10}$&4&$(4)\cup(3,1)$&49&7\\
$H_{11}$&4&$(3,1)\cup(2,1,1)$&147&21\\
$H_{12}$&4&$(3,1)\cup(2,2)\cup(2,1,1)$&168&24\\
$H_{13}$&5&all profiles in $\Delta_7(5)$&462&66\\
\bottomrule
\end{tabular}
\end{table}

The five corrections also have a direct geometric meaning.  The vertex set of
$H_9$ partitions, according to the heavy coordinate, into seven upward
$K_7$'s.  The induced graph may also contain edges between different parts,
which can only strengthen the resulting upper bound; $H_{10}$ is the analogous
correction in residual degree four.  The tiles $H_{11}$ and $H_{12}$ join the degree-four
layers concentrated on two or three coordinates, while $H_{13}=G_7(5)$ is a
full residual simplex and supplies a thicker interior scale.  Thus the larger
family adds exact-density shapes at precisely the corner, edge, and interior
boundary scales missed by $H_1,\ldots,H_8$.  The eventual construction is one
weighted cover formed from all placements in~\eqref{eq:q7-placed-family}, not
thirteen separate decompositions of $G_7(d)$.

This example also shows why the method does not immediately settle every
prime $q$.  As $q$ grows, more boundary profile types appear and the tiles
needed to cover them become less simple.  The finite system also grows
quickly: with $s$ tile types and anchor cap $C$, the saturated anchor states
alone give
\[
 s\binom{C+q-1}{q-1}
\]
weight variables.  Thus the reduction remains finite, but finding a useful
family and solving the resulting system become harder.  We nevertheless
expect Conjecture~\ref{conj:prime-checksum-optimality} to hold for every prime
$q$: the checksum formula should be correct for all sufficiently large $d$,
while finitely many small degrees may remain exceptional, as they do for
$q=3,5,7$.

\begin{lemma}[Local capacities]
\label{lem:q7-local-tiles}
The thirteen independence numbers in Table~\ref{tab:q7-tiles} are exact.
\end{lemma}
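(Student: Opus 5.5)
The plan is to prove each entry in two halves: a lower bound from an independent set and an upper bound from a small structural argument, following the proof of Lemma~\ref{lem:q5-six-tiles}.

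\textbf{Lower bounds.} For the lower bounds I would restrict the cyclic checksum $\sigma_7$ to each $U_t$. Every $U_t$ is a union of $S_7$-orbits $\mathcal O_7(\lambda)$ with $|\lambda|=r_t\not\equiv0\pmod 7$. Cyclically shifting the coordinate labels, $a\mapsto(a_7,a_1,\dots,a_6)$, preserves each orbit and adds $r_t$ to $\sigma_7$. Since $r_t$ is a unit mod $7$, this shift permutes the seven fibers transitively. Hence each fiber inside $U_t$ has exactly $|U_t|/7$ elements and is independent, which gives $\alpha\ge|U_t|/7$. In every row of Table~\ref{tab:q7-tiles} this equals the claimed capacity, so all thirteen tiles have density $1/7$.

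\textbf{Upper bounds for single-orbit tiles.}
\begin{itemize}
\item $H_1$ and $H_8$ are $K_7$, so $\alpha=1$.
\item $H_2$ and $H_7$ are isomorphic, by complementation, to the Johnson graph $J(7,2)$. An independent set there is a matching in $K_7$, so $\alpha=3$.
\item $H_3$ and $H_5$ are isomorphic to $J(7,3)$. An independent set is a family of triples pairwise meeting in at most one point, that is, a partial Steiner triple system, so $\alpha\le 7\cdot6/6=7$.
\end{itemize}

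\textbf{Upper bounds for the multi-orbit tiles.} I would layer by the heavy coordinate, as in the $P_3$ and $R_{4c}$ arguments.
\begin{itemize}
\item $H_9$ splits into seven upward $K_7$'s $2e_i+\Delta_7(1)$, which gives $\le7$ at once.
\item $H_{10}$ splits the same way, into the cliques $3e_i+\Delta_7(1)$.
\item $H_{13}=G_7(5)$: the value $66$ is $\alpha_7(5)$, which the paper certifies separately (Table~\ref{tab:certificate-map}). I would cite that result.
\item $H_4$: for each doubled coordinate $i$, the vertices $2e_i+e_j$ form a clique, so they contribute at most $1$. A selected word $2e_i+e_j$ excludes every triple containing $\{i,j\}$. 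I would bound (number of triples) $+$ (number of $(2,1)$ words) by weighting: each triple $\{i,j,k\}$ blocks the six words $2e_i+e_j$, and so on. Combining this with the Steiner bound $7$ on triples should give $11$. I would try the weighted cover $\mathcal O(2,1)$-cliques with triples, in the spirit of Theorem~\ref{thm:weighted-tile-bound}.
\item $H_6$, $H_{11}$, $H_{12}$: fix the coordinate carrying the largest entry. Inside one layer, the words $2e_i+e_j+e_k$ induce $J(6,2)$, whose independence number is $3$, and the words $3e_i+e_j$ form a clique. Each selected $(3,1)$ word lowers the layer's $(2,1,1)$ capacity. For $H_{11}$ this should give at most $3$ per layer, hence $21$.
\item For $H_{12}$ the extra $(2,2)$ orbit couples two layers, and for $H_6$ the $(1^4)$ orbit, with its $J(7,4)\cong J(7,3)$ capacity $7$, is added on top. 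The naive sums are $21+$ something, and showing that the something is exactly $3$ for $H_{12}$ and $5$ for $H_6$ requires real interaction arguments.
\end{itemize}

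\textbf{Main obstacle.} The cross-orbit adjacencies in $H_4$, $H_6$, and $H_{12}$ are the hard part. If clean counting fails, the fallback is that these graphs have at most $182$ vertices and full $S_7$ symmetry. One could then fix a symmetry-reduced first vertex and certify the bounds $\alpha\le11,26,24$ exactly, either with a small fractional local cover or with a DRAT refutation of "independent set of size one more", in line with Section~\ref{subsec:certificate-verification}.
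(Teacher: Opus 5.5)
\textbf{Lower bounds for $H_3$ and $H_5$.} Your cyclic-shift argument is valid, but its conclusion fails in two rows of Table~\ref{tab:q7-tiles}. Here $|U_3|=|U_5|=35$, so the equidistributed checksum fibers on $H_3$ and $H_5$ have only $5$ elements, not $7$. These two tiles have density $1/5$, not $1/7$, so they need a different lower-bound construction. The seven lines of the Fano plane are triples pairwise meeting in exactly one point, hence independent in $J(7,3)$, and their complements are independent in $J(7,4)$. Apart from this, your squarefree upper bounds and your treatment of $H_9$, $H_{10}$, $H_{11}$ match the paper.

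\textbf{Upper bounds for the mixed tiles.} The more serious gap is that the upper bounds carrying the lemma's content are not proved.

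For $H_{13}$ you cite $\alpha_7(5)=66$ from Table~\ref{tab:certificate-map}. That entry is the certificate produced by this very lemma, and Theorem~\ref{thm:q7} takes $d=5$ from it, so the citation is circular. The paper supposes an independent $67$-set and sums seven families of translated local tiles into aggregate linear inequalities on its seven type counts. It then enumerates the fifty surviving integer type vectors, refuting four by a matching argument and forty-six by checked DRAT proofs.

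For $H_{12}$ and $H_6$ you stop at ``real interaction arguments,'' but short ones exist.
In $H_{12}$, view the selected $(2,2)$ words as edges of a graph $K$ on $[7]$ with degrees $d_i$. Since $2e_i+2e_j$ is adjacent to $3e_i+e_j$ and to every $2e_i+e_j+e_k$, layer $i$ contributes at most $3-\lfloor d_i/2\rfloor$. Hence $|I|\le 21+|E(K)|-\sum_i\lfloor d_i/2\rfloor=21+\tfrac12\#\{i:d_i\text{ odd}\}\le24$, because the number of odd-degree vertices is even.
In $H_6$, let $c$ count the selected four-sets. If $c\le5$, the layer bound gives $21+c\le26$. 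If $c\ge6$, use three facts: each selected four-set is adjacent to the twelve $(2,1,1)$ words whose support triple it contains; two selected four-sets share no triple; and the three orientations on one support triple form a clique. Then $|I|\le7+(35-4c)+c\le24$.

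Your $H_4$ sketch also lacks its decisive step. Let $s$ be the number of selected triples; they pairwise meet in at most one point. Then each coordinate lies in at most three of them, and these degrees sum to $3s$. A coordinate lying in three selected triples has all six of its pairs covered, so it admits no word $2e_i+e_j$. At least $3s-14$ coordinates are of this kind, giving $|I|\le s+\min(7,21-3s)\le11$. Alternatively, put weight $2/3$ on each clique $\{2e_i+e_j:j\ne i\}$ and $1/3$ on each $K_7$ of the form $e_i+e_j+\Delta_7(1)$. This is an exact cover of cost $35/3<12$.

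Your DRAT fallback is legitimate in principle, but as written it is a plan, not a proof.
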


\begin{proof}
The squarefree layers $H_1,H_2,H_3,H_5,H_7,H_8$ give the clique, matching,
pair-packing, and complementary pair-packing bounds; the Fano lines attain the
values for $H_3$ and $H_5$.  For $H_4$, if an independent collection contains
$s$ triples, their $3s$
pairs are unavailable to the directed type-$(2,1)$ vertices.  The two ranges
$s\le4$ and $s\ge5$ both give a total at most $11$.  In $H_6$, the
type-$(2,1,1)$ vertices with a fixed doubled coordinate form a matching of
size at most three, reduced to two when a type-$(3,1)$ vertex with that heavy
coordinate is present.  Hence the selected vertices of types $(3,1)$ and
$(2,1,1)$ have total size at most $7\cdot3=21$.  Let $c$ be the number of
selected squarefree four-sets.  If $c\le5$, the total is at most $21+c\le26$.
If $c\ge6$, observe that the three orientations of type $(2,1,1)$ on a fixed
support triple form a clique.  Each selected squarefree four-set forbids the
four support triples that it contains, and two selected squarefree four-sets
share no triple.  At most $35-4c$ type-$(2,1,1)$ vertices can therefore be
selected.  There are at most seven type-$(3,1)$ vertices, one for each heavy
coordinate, so in this case
\[
 |I|\le 7+(35-4c)+c=42-3c\le24.
\]
Thus $\alpha(H_6)\le26$, and the cyclic fiber attains equality.

The vertex sets of $H_9$ and $H_{10}$ partition into seven heavy-coordinate
cliques,
and the same matching argument gives $\alpha(H_{11})=21$.  For $H_{12}$,
regard the selected type-$(2,2)$ vertices as the edges of a graph $K$ on
seven coordinates, with degrees $d_i$.  The remaining contribution at
coordinate $i$ is at most $3-\lfloor d_i/2\rfloor$, whence
\[
 |I|\le |E(K)|+21-\sum_i\left\lfloor\frac{d_i}{2}\right\rfloor
 =21+\frac{\#\{i:d_i\text{ odd}\}}2\le24.
\]
Cyclic fibers attain these bounds.  For $H_{13}=G_7(5)$ the checksum fiber
gives an independent $66$-set.  For the upper bound, suppose that $I$ has
size $67$, and let $n_i$ count its vertices of the ordered partition types
\[
 (5),(4,1),(3,2),(3,1,1),(2,2,1),(2,1,1,1),(1^5),
 \qquad 0\le i\le6.
\]
The orbit sizes give the coordinatewise bounds
\[
 0\le(n_0,\ldots,n_6)\le(7,42,42,105,105,140,21),
 \qquad \sum_{i=0}^6 n_i=67.
\]
The seven positive local families are, in order: the seven upward cliques
$4e_i+\Delta_7(1)$; the thirty-five upward cliques
$\mathbf 1_S+\Delta_7(1)$ with $|S|=4$; the forty-two downward two-vertex
cliques obtained from $5e_i+e_j$, $i\ne j$; the forty-two translates of
$J(5,2)$ with complementary ordered base entries $1$ and $2$; the forty-two
translates of $J(7,2)$ with base type $(2,1)$; the seven translates
$2e_j+H_9$; and the seven translates $e_j+H_{10}$.  Their respective local
capacities are $1,1,1,2,3,7,7$.  Summing all translates within these seven
families gives, in the same order, the exact aggregate inequalities
\begin{align*}
 n_0+n_1&\le7,                 &n_5+5n_6&\le35,\\
 6n_0+n_1&\le42,              &3n_5&\le84,\\
 n_2+2n_3+2n_4+3n_5&\le126,\\
 n_0+n_1+2n_2+2n_4&\le49,\\
 n_0+2n_1+n_2+2n_3&\le49.
\end{align*}
The verifier reconstructs the seven local families, their capacities, and
every displayed incidence coefficient.  Direct integer enumeration of this
displayed system gives exactly fifty type-count vectors, which are retained in
the immutable manifest.  Four vectors require four squarefree words; their
complementary two-sets would have to be a matching on seven points, an immediate
contradiction.  Each of the remaining forty-six vectors gives a
deterministically generated CNF with a retained DRAT refutation accepted by
the proof checker.  The reduction also reconstructs all $462$ vertices, so the
fifty profiles are exhaustive.  This proves $\alpha_7(5)=66$ without trusting
a SAT-solver status; see~\cite{ZabokritskiyCompanion2026}.
\end{proof}

Cap every anchor coordinate at three and let a tile weight depend only on its
type and its sorted capped anchor profile.  There are $84$ anchor profiles and
hence $13\cdot84=1092$ variables.  Vertex coordinates may be capped at eight,
leaving $3003$ saturated vertex states.

\begin{proposition}[Exact eventual certificate]
\label{prop:q7-eventual-cover}
There is a nonnegative rational state vector, supported on $435$ variables,
which covers $G_7(d)$ for every $d\ge26$ and has cost
\begin{equation}
 \frac17\binom{d+6}{6}+\delta_7,
 \qquad 0<\delta_7<1.
 \label{eq:q7-eventual-cost}
\end{equation}
\end{proposition}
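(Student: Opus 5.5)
The plan is to instantiate Theorem~\ref{thm:finite-state-reduction} for the library $\mathcal H_7=\{H_1,\ldots,H_{13}\}$ with anchor cap $C=3$. Then Corollary~\ref{cor:symmetry-quotient} lets us pass to sorted capped states; this is legitimate because every $U_t$ is a union of full $S_7$-orbits and is therefore invariant under coordinate permutations. The unknowns are the $13\cdot84=1092$ weights $z_{t,\tau}$. Since $\max_t r_t=6$, vertex coordinates may be capped at $D=C+6=9$. In fact a cap of eight suffices in the sorted formulation: a coordinate equal to $8$ already exceeds every residual entry $u_i$ by at least $C$ except when $u_i=6$, and the only such $u$ comes from $H_8$, which is squarefree, so $u_i\le1$. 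This cap yields the $3003$ saturated sorted vertex states. To these we add the unsaturated sorted profiles with all entries at most seven and total at least $26$, which gives the $1547$ transition profiles recorded in Table~\ref{tab:certificate-map}. Each row is the coverage formula~\eqref{eq:state-coverage-formula}, with integer coefficients counting how many $u\in U_t$ satisfy $u\le x$ and give $\operatorname{sort}\tau_3(x-u)=\tau$.

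For the cost, every sorted state $\tau$ with $k\ge1$ capped entries contributes $\alpha(H_t)\,|\mathcal O_7(\tau)|\binom{d-r_t-|\tau|+k-1}{k-1}z_{t,\tau}$, where the $\alpha(H_t)$ are taken from Lemma~\ref{lem:q7-local-tiles}. The total cost is therefore a degree-six polynomial in $d$ whose coefficients are linear forms in the $z$'s. Every one of the thirteen tiles has density exactly $1/7$, so the defect identity~\eqref{eq:abstract-defect} gives
\[
\cost(F_d)-\tfrac17\tbinom{d+6}{6}=\tfrac17\sum_x(F_d(x)-1).
\]
I would match the coefficients of $d^6,\ldots,d^1$ with those of $\frac17\binom{d+6}{6}$ and treat the constant term as $\delta_7$. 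Anchor states with $k=0$ have $|\tau|\le 18$ and so occur only in degrees $d\le 24<26$. As a result, from $d=26$ on the multiplicity formulas hold exactly, possibly after evaluating a few initial degrees directly if some state is infeasible because $d-r_t-|\tau|<0$; the verifier handles these transition degrees by evaluation.

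Next, I would solve the linear program: minimize $\delta_7$ subject to all saturated and unsaturated rows and $z\ge0$. The result should be converted to an exact rational vector, which I would then verify over $\mathbb Q$ as described in Section~\ref{subsec:certificate-verification}. The verification checks nonnegativity, each of the $3003+1547$ rows, and each of the six nonconstant coefficient identities. It also confirms $0<\delta_7<1$; positivity is automatic whenever $\binom{d+6}{6}/7$ is not an integer, since the bound must dominate the lower bound $L_7(d)$. The support should come out at $435$ variables. Finally, Theorem~\ref{thm:weighted-tile-bound} converts the cover into $\alpha_7(d)\le\frac17\binom{d+6}{6}+\delta_7<L_7(d)+1$, which is the intended consequence.

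The main obstacle is feasibility itself: producing a rational vector whose total overcoverage stays below $7$ uniformly in $d$. This requires the boundary layers to be covered almost exactly, with no linear-in-$d$ excess along faces of the simplex. If the floating-point LP solution has slight violations, the first thing I would try is a rational reconstruction of the optimal basis: identify the active rows, solve that square subsystem exactly, and recheck. If tight rows then fail, I would perturb the objective so as to leave slack in $\delta_7$ before rationalizing.
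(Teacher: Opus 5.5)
Your plan is correct and is essentially the paper's proof: instantiate the finite-state reduction with anchor cap $3$ and vertex cap $8$. The cap $8$ works because the largest single coordinate occurring in any $U_t$ is $5$, from $5e_i\in U_{13}$. One then checks the $3003$ saturated rows, the $1547$ unsaturated rows for $26\le d\le49$, and the six nonconstant cost coefficients over $\mathbb Q$, with the local capacities supplied by Lemma~\ref{lem:q7-local-tiles}. As in the paper, the remaining substance is the archived rational vector itself. One slip: anchor states with $k=0$ have all entries at most $2$, so $|\tau|\le14$ and they occur only for $d\le20$. This only strengthens your conclusion.
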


\begin{proof}
The archived rational vector and verifier reconstruct all $1092$ variables,
check nonnegativity and the $3003$ saturated state rows, and check the $1547$
unsaturated profiles needed for $26\le d\le49$.  They also verify every
coefficient of the exact cost identity over $\mathbb Q$.  From degree $50$
onward a coordinate is at least eight, so the saturated verification applies.
The local-capacity inputs, including $H_{13}$, are checked by the preceding
lemma and its retained proof objects.  Thus the finite-state reduction proves
the proposition~\cite{ZabokritskiyCompanion2026}.
\end{proof}

\begin{proposition}[Exact transition range]
\label{prop:q7-transition}
The equality $\alpha_7(d)=L_7(d)$ holds for every $7\le d\le25$.
\end{proposition}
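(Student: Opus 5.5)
The lower bound $\alpha_7(d)\ge L_7(d)$ is the cyclic checksum construction \eqref{eq:additive-lower-bound}, so only the upper bound $\alpha_7(d)\le L_7(d)$ for $7\le d\le25$ needs proof. I will follow the pattern of Proposition~\ref{prop:q5-finite}. For each degree $d$ in this range separately, I apply Theorem~\ref{thm:weighted-tile-bound} to the placed family $\mathcal T_7(d)$ of \eqref{eq:q7-placed-family}, using the thirteen templates of Table~\ref{tab:q7-tiles}, whose capacities are exact by Lemma~\ref{lem:q7-local-tiles}. By Corollary~\ref{cor:symmetry-quotient}, the weights may depend only on the tile type and the sorted anchor profile. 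In a fixed degree no cap is needed, so the linear program is indexed by partition-type orbits of anchors in $\Delta_7(d-r_t)$ and by partition types of vertices in $\Delta_7(d)$. All thirteen templates have density $1/7$, so the defect identity \eqref{eq:abstract-defect} reduces the goal to a fractional cover whose total overcoverage satisfies $E(F)<7\bigl(L_7(d)+1\bigr)-\binom{d+6}{6}$. Equivalently, the cost must be strictly below $L_7(d)+1$, and integrality then gives $\alpha_7(d)\le L_7(d)$.

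For each degree the steps are as follows. First, solve the orbit LP and round its solution to an exact rational vector. Second, have the verifier regenerate every incidence count between anchor orbits and vertex types, namely the number of $u\in U_t$ with $x-u$ in a given anchor orbit. Third, check every coverage row $F(x)\ge1$ and the exact cost over $\mathbb Q$, as in Proposition~\ref{prop:q7-eventual-cover}. The number of partition types with at most seven parts of $d\le25$ is small, so each system is modest. If the family $\mathcal H_7$ is insufficient in some degree, I would first add further exact-density orbit unions: translates of $G_7(r)$ for the residual degrees $r$ at which $\binom{r+6}{6}$ is divisible by $7$ and $\alpha_7(r)$ is already known, or additional boundary layers like $H_9$ and $H_{10}$. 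Any added template needs its own capacity proof, handled as in Lemma~\ref{lem:q7-local-tiles}.

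The main obstacle is the degrees where the fractional slack is nearly zero. These are the $d$ with $\binom{d+6}{6}\equiv0$ or a small residue mod $7$; for $7\le d\le25$ these include $d=7$ and $d=8$. There, a cost below $L_7(d)+1$ requires overcoverage below a few units, and boundary profiles such as pure powers $de_i$ may force more than that. If the LP optimum in such a degree is at least $L_7(d)+1$, I would fall back on the strategy used for $\alpha_5(6)$ and $\alpha_7(5)$. That means using the near-optimal cover to derive integral capacity-slack equations on the type counts $x_\lambda$ of a hypothetical independent set of size $L_7(d)+1$, normalizing the corners by the corner-exchange argument, enumerating the finitely many feasible type-count vectors, and refuting each one by a DRAT-checked CNF. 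I expect the rational covers to succeed in most of the nineteen degrees, with the combinatorial reduction needed only for an isolated degree or two.
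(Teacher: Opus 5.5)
Your proposal matches the paper's proof. For each $7\le d\le25$, an exact rational orbit local cover has cost strictly below $L_7(d)+1$, it is checked over $\mathbb Q$, and integrality gives $\alpha_7(d)\le L_7(d)$; the paper also stores matching dual vectors for $7\le d\le14$, and no SAT fallback is needed in any degree.

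One correction to your slack heuristic: $\binom{d+6}{6}\equiv0\pmod 7$ exactly when $7\nmid d$, and $\equiv1$ otherwise. So the tight degrees are those with $7\nmid d$, where you need $E(F)<7$, while $d=7,14,21$ have the most room, $E(F)<13$.
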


\begin{proof}
The checksum fibers give the lower bounds.  For $7\le d\le14$, matching
rational primal and dual orbit vectors certify the exact local-cover optimum.
For $15\le d\le25$, rational covers have cost strictly below $L_7(d)+1$.
The verifier regenerates every orbit incidence, checks all primal and dual
inequalities and objectives over $\mathbb Q$, and checks each strict cost
comparison.  Integrality then gives the asserted upper bounds
\cite{ZabokritskiyCompanion2026}.
\end{proof}

\begin{proposition}[The exceptional degree; computer-assisted]
\label{prop:q7-d6}
\begin{equation}
 \alpha_7(6)=133.
 \label{eq:q7-d6}
\end{equation}
\end{proposition}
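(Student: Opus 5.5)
We need a lower bound and an upper bound. Note $|\Delta_7(6)|=\binom{12}{6}=924=7\cdot132$, so every checksum fiber has size exactly $132$ and the cyclic coloring gives only $\alpha_7(6)\ge132$. The claimed value $133$ therefore needs an explicit independent set of size $133$, and a computer-assisted argument that $134$ is impossible.

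\textbf{Lower bound.} We take a union of coordinate-orbit layers, modelled on the $q=5$, $d=4$ exception~\eqref{eq:q5-d4-exception}, and repair it locally. First candidate: keep the seven corners $\mathcal O_7(6)$. Start from a checksum fiber of size $132$ that contains all corners. Then look for a local swap near the corners, removing $k$ vertices and adding $k+1$. Such a swap should exist because around $6e_i$ the fiber wastes capacity on the $(5,1)$ layer, just as in degree six for $q=5$. We exhibit the resulting $133$-set explicitly. Its independence is checked by the deterministic verifier, since there are only $924$ vertices to test.

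\textbf{Upper bound, first reduction.} Suppose $I$ is independent with $|I|=134$. The corner-replacement argument used for $q=5$ in degree six carries over. Each $(5,1)$ word is adjacent to a corner, and any selected neighbor of $6e_i$ can be exchanged for $6e_i$. Hence we may assume $I$ contains all seven corners and no $(5,1)$ word. What remains is an independent set $J$ of size $127$ on the $924-7-42=875$ remaining vertices.

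\textbf{Upper bound, counting step.} We next build an exact fractional cover of these $875$ vertices by translated families from $\mathcal H_7$. The candidates are clique tiles $a+H_1$ indexed by anchor partition types such as $(4,1),(3,2),(2,2,1),(2,1,1,1)$, and matching tiles $a+H_2$. We choose the weights so that the cost lies just above $127$. Clearing denominators then turns the cover into a short list of nonnegative integral slack variables, one per anchor type, in the type counts $x_\lambda$. Their weighted sum equals a small constant, exactly as in~\eqref{eq:q5-d6-slack}. Some of these slacks will be forced to vanish, and each vanishing slack becomes a shadow identity.

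\textbf{Finishing the upper bound.} We first enumerate the integer type-count vectors $(x_\lambda)$ that satisfy all the aggregate slack inequalities. We add the elementary per-layer bounds, namely the $J(6,2)$-style matching limits and the clique limits in orbits such as $\mathcal O_7(2,1^4)$. Each surviving profile is encoded as a CNF: adjacency clauses, totalizers fixing the orbit counts, the labeled shadow clauses, and a symmetry-breaking unit clause. Each CNF is refuted by a retained DRAT proof accepted by DRAT-trim, as in Table~\ref{tab:certificate-map}. Profiles where exact integer reasoning is cheaper than SAT (the table mentions three such cases) are closed by a deterministic exhaustive checker.

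The main obstacle is making this final split tractable. A raw CNF with $875$ vertices and a cardinality constraint of $127$ is likely too hard for a solver. The work therefore lies in finding cover weights whose integral slack identity is tight enough to leave only about fifteen type profiles. I would search for those weights with an LP that minimizes the slack total over corner-adjacent anchor types.
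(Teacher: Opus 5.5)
The genuine gap is the lower bound, and it is the half of the statement that makes degree six exceptional. You never produce a $133$-set, and your starting point does not exist. Since $\sigma_7(6e_i)=6(i-1)\bmod 7$ takes all seven values, each cyclic fiber contains exactly one corner. Moreover, each clique $5e_i+\Delta_7(1)$ carries seven distinct checksums, so every fiber meets it exactly once. Trading the six $(5,1)$ words for the missing corners therefore keeps the size at $132$, and there is no spare capacity near the corners to harvest.

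The missing idea is a \emph{non-cyclic} label set. Assign the coordinates the seven nonzero vectors $g_1,\dots,g_7$ of $\mathbb F_2^3$ and take the zero fiber of $a\mapsto\sum_i a_ig_i$. A unit transfer changes this syndrome by $g_i+g_j\ne0$, so the fiber is independent. Every nontrivial character equals $-1$ on four labels and $+1$ on three, so character averaging gives $\frac18\bigl[\binom{12}6+7\,[z^6](1-z)^{-3}(1+z)^{-4}\bigr]=\frac18\bigl[924+7\cdot20\bigr]=133$. This needs no search or verifier.

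Your upper-bound plan has the paper's overall shape: a cover, saturation, then symmetry-broken CNFs refuted by DRAT. Your corner reduction is sound. However, the decisive ingredient, the cover itself, is left to a future search, and the right target is zero slack rather than cost ``just above $127$.'' The paper's cover has coverage exactly one on all $924$ vertices and cost exactly $134$. It uses:
\begin{itemize}
\item upward cliques at anchor types $(5),(4,1),(3,2),(2,2,1)$ with weights $3/5,1/3,1/2,1/6$;
\item downward cliques centered at types $(6,1)$ and $(1^7)$ with weights $1/15$ and $1$;
\item $J(7,2)$ translates at anchors $(3,1),(2,1,1)$ with weight $1/6$.
\end{itemize}
The $(5)$ and $(6,1)$ tiles cost $7$ and meet only corners and $(5,1)$ words. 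Deleting them gives a cover of your $875$ vertices of cost exactly $127$, so every tile is saturated. That cover needs the downward $K_7$ on $\mathcal O_7(1^6)$, which is $H_8$ at anchor $0$ and is not among the $H_1$/$H_2$ candidates you list.

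Saturation yields a type parametrization with free parameters $b,c,t$, not a short list of type vectors. The fifteen CNFs are the cases $t\in\{0,\dots,4\}$ times the three $S_6$-orbit branches on $\mathcal O_7(3,1,1,1)$, taken after fixing the forced unique squarefree word. The three exact-integer cases are $t=5,6,7$.

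Finally, one pruning constraint you propose is false. $\mathcal O_7(2,1^4)$ is not a clique: for example, $2e_1+e_2+e_3+e_4+e_5$ and $e_1+e_2+e_3+e_4+2e_6$ are at $\ell_1$-distance four. It is a clique only when $q=5$. Saturation forces $x_{(2,2,2)}=2c-7-4t\ge0$, that is, $x_{(2,1^4)}=c\ge4$. So that invalid bound alone would eliminate every case, and the computation would prove nothing.
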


\begin{proof}
Label the coordinates by the seven nonzero vectors of $\mathbb F_2^3$ and
take the zero-syndrome fiber.  A unit transfer changes its syndrome by the
sum of two distinct labels, so the fiber is independent.  Character averaging
gives its size as
\[
 \frac18\left[\binom{12}{6}+7\binom63\right]=133.
\]

For the upper bound, use the following exact rational local cover.  Upward
cliques with degree-five anchor types $(5)$, $(4,1)$, $(3,2)$, and $(2,2,1)$
receive weights $3/5$, $1/3$, $1/2$, and $1/6$, respectively.  Downward
cliques with degree-seven centers of types $(6,1)$ and $(1^7)$ receive weights
$1/15$ and $1$.  Finally, each translated $J(7,2)$ tile with degree-four
anchor of type $(3,1)$ or $(2,1,1)$ receives weight $1/6$; its local capacity
is three.  These are $386$ positive-weight tiles.  Direct incidence counting
gives coverage exactly one at every one of the $924$ vertices and total cost
$134$.

Suppose that an independent set $I$ had size $134$.  Equality in the weighted
local-cover bound would then force every positive-weight tile $T$ to be
saturated:
\[
 |I\cap T|=\alpha(T).
\]
For a partition $\lambda$ of six, write $x_\lambda$ for the number of selected
vertices of type $\lambda$.  Summing the saturated tile equalities gives the
following complete type parameterization, where
\[
 b=x_{(3,3)},\qquad c=x_{(2,1^4)},\qquad
 t=\frac{x_{(3,2,1)}}6=\frac{28-x_{(3,1,1,1)}}2.
\]
\[
\begin{array}{c|c}
\lambda&x_\lambda\\
\hline
(6)&7\\
(5,1)&0\\
(4,2)&42-2b-6t\\
(4,1,1)&b+3t\\
(3,3)&b\\
(3,2,1)&6t\\
(3,1,1,1)&28-2t\\
(2,2,2)&-7-4t+2c\\
(2,2,1,1)&63+3t-3c\\
(2,1^4)&c\\
(1^6)&1
\end{array}
\]
In particular, $3t=x_{(4,1,1)}-b$.  Thus both $2t$ and $3t$ are integers,
so $t$ is integral; nonnegativity of the displayed counts gives
$t\in\{0,\ldots,7\}$.

The unique selected squarefree word may, by coordinate symmetry, be fixed as
$(0,1,1,1,1,1,1)$.  Its $S_6$ stabilizer has exactly three orbits on the
type-$(3,1,1,1)$ layer, with representatives
\[
 (3,1,1,1,0,0,0),\qquad
 (1,3,1,1,0,0,0),\qquad
 (0,3,1,1,1,0,0).
\]
The orbit sizes are $20,60,60$, and the corresponding stabilizers are
$S_3\times S_3$, $S_2\times S_3$, and $S_3\times S_2$.  Since
$x_{(3,1,1,1)}=28-2t>0$, fixing one of these representatives gives three
lossless branches.  A deterministic exact-integer enumeration eliminates
$t=5,6,7$, visiting respectively $93$, $73$, and $39$ search nodes.  For each
of $t=0,\ldots,4$ and each of the three branches, a deterministic encoder
reconstructs the constraints from all $924$ original vertex variables and
produces one CNF, giving fifteen formulas.  Every CNF has a retained core DRAT
refutation, and the verifier regenerates each formula and invokes the proof
checker on its derivation.  Thus no discovery-solver status enters the proof,
and no independent $134$-set exists
\cite{ZabokritskiyCompanion2026}.
\end{proof}

\begin{theorem}[Seven-variable spreading numbers]
\label{thm:q7}
For every $d\ge1$,
\begin{equation}
 \alpha_7(d)=
 \begin{cases}
 7,&d=2,\\
 14,&d=3,\\
 35,&d=4,\\
 133,&d=6,\\
 \displaystyle\left\lceil\frac17\binom{d+6}{6}\right\rceil,
   &d\notin\{2,3,4,6\}.
 \end{cases}
 \label{eq:q7-main}
\end{equation}
In particular, the prime-checksum formula holds for every $d\ge7$, and the
threshold seven is sharp.
\end{theorem}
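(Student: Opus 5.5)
The plan is to assemble Theorem~\ref{thm:q7} degree by degree from the results already proved in this section, splitting $d\ge1$ into the ranges $d=1$; the exceptional degrees $2,3,4$; $d=5$; $d=6$; $7\le d\le25$; and $d\ge26$. For $d=1$ the graph is $K_7$, so $\alpha_7(1)=1=L_7(1)$. For $d=5$, Lemma~\ref{lem:q7-local-tiles} gives $\alpha(H_{13})=\alpha_7(5)=66=\lceil 462/7\rceil$. For $d=6$, Proposition~\ref{prop:q7-d6} gives $133$. Since $L_7(6)=\lceil 924/7\rceil=132<133$, degree six is a genuine exception. For $7\le d\le25$ we invoke Proposition~\ref{prop:q7-transition}.

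For $d\ge26$, Proposition~\ref{prop:q7-eventual-cover} together with Theorem~\ref{thm:weighted-tile-bound} gives $\alpha_7(d)\le\frac17\binom{d+6}{6}+\delta_7$ with $0<\delta_7<1$. Integrality then gives $\alpha_7(d)\le L_7(d)$: if $\binom{d+6}{6}\equiv0\pmod 7$ this is immediate, and otherwise the fractional part is already positive and adding $\delta_7<1$ keeps the floor at most the ceiling. The checksum fibers give the matching lower bound.

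For $d=2,3,4$ I would do direct finite arguments in the style of the $q=5$ exceptions. In each case I would exhibit an independent set and match it with a small clique cover.
\begin{itemize}[leftmargin=*]
\item For $d=2$: the corners $\{2e_i\}$ give the lower bound $7$. For the upper bound, the seven cliques $e_i+\Delta_7(1)$ cover $\Delta_7(2)$ at cost $7$.
\item For $d=3$: the lower bound of $14$ should come from the corners together with the seven Fano lines $\mathbf 1_\ell$ ($7+7=14$). The type-$(2,1)$ words are adjacent to one of these, and two Fano lines share one point, so their difference has $\ell_1$-norm $4$; thus the set is independent. For the upper bound, use the $21$ upward cliques $e_i+e_j+\Delta_7(1)$ on pairs. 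Combine them with the seven cliques $2e_i+\Delta_7(1)$, choosing weights so that corners, type $(2,1)$ and type $(1^3)$ are each covered; the target is an aggregate cost of $14$, which I would try to verify by orbit counting. Alternatively, use the corner-replacement argument plus the packing bound $\alpha(H_3)=7$ and $\alpha(H_9)$-type counts.
\item For $d=4$: the lower bound of $35$ should come from the corners, the $21$ words $2e_i+2e_j$, and the $7$ complements of Fano lines $(1^4)$. That gives $7+21+7=35$, and independence can be checked by $\ell_1$-distances.
\end{itemize}

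The upper bound for $d=4$ is the main obstacle. I would first try a clique-type orbit LP in the quotient by $S_7$ with the tiles $H_1$, $H_2$, $H_{10}$, $H_{12}$ (capacity density $1/7$), aiming for cost below $36$. If the LP is not sharp, I would fall back on the saturation-and-enumeration argument used for $d=6$, certified as in Section~\ref{subsec:certificate-verification}.

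Finally, I would observe sharpness of the threshold seven. Degree six is exceptional, since $133>132$, and all $d\ge7$ satisfy the checksum formula. This proves the ``in particular'' clause.
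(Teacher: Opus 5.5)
Your assembly for $d=1$ and $d\ge5$ matches the paper's. You use Lemma~\ref{lem:q7-local-tiles} for $d=5$, Proposition~\ref{prop:q7-d6} for $d=6$, Proposition~\ref{prop:q7-transition} for $7\le d\le25$, and Proposition~\ref{prop:q7-eventual-cover} with integrality for $d\ge26$. Your observation that $\lfloor x+\delta\rfloor\le\lceil x\rceil$ whenever $0\le\delta<1$ is a slightly more uniform version of the paper's split into $7\nmid d$ and $7\mid d$. Your lower-bound sets for $d=3$ (corners plus Fano lines) and $d=4$ (corners, all $2e_i+2e_j$, and complements of Fano lines) are correct.

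The gap is the upper bound at $d=4$, and to a lesser extent $d=3$. You describe an LP search with a fallback to enumeration rather than giving a certificate, so these degrees are not yet proved. The paper needs no new work here. Theorem~\ref{thm:small-fixed-degree} already gives $\alpha_q(3)=q+D(q,3,2)$ and $\alpha_q(4)=q+\binom q2+D(q,4,3)$ for every $q$. Since $D(7,3,2)=D(7,4,3)=7$ (Fano lines and their complements), this yields $14$ and $35$. Its $d=4$ cover uses the squarefree $J(7,4)$ layer, which is not on your tile list.

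Your own plan does close, so the enumeration fallback is unnecessary.
\begin{itemize}
\item For $d=3$, put weight $1$ on the seven cliques $2e_i+\Delta_7(1)$ and $1/3$ on the $21$ cliques $e_i+e_j+\Delta_7(1)$. The coverages are $1,4/3,1$ on types $(3),(2,1),(1^3)$, and the cost is $7+7=14$.
\item For $d=4$, upward cliques alone fail. The orbit LP forces $w_{(3)}\ge1$, $w_{(2,1)}\ge1/2$, $w_{(1^3)}\ge1/4$, giving cost $147/4$, which is not below $36$.
\item Adding $H_2$ repairs this. Put weight $1$ on the cliques anchored at type $(3)$, weight $5/12$ on those anchored at type $(2,1)$, and weight $1/6$ on the $21$ translates $e_i+e_j+\mathcal O_7(1^2)$. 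The coverages are $1,17/12,1,7/6,1$ on types $(4),(3,1),(2,2),(2,1,1),(1^4)$, and the cost is $7+35/2+21/2=35$.
\end{itemize}
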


\begin{proof}
Theorem~\ref{thm:small-fixed-degree} gives $d\le4$, and
Lemma~\ref{lem:q7-local-tiles} gives the finite base case $d=5$.
Proposition~\ref{prop:q7-d6} handles degree six, while
Proposition~\ref{prop:q7-transition} handles $7\le d\le25$.  For $d\ge26$,
Proposition~\ref{prop:q7-eventual-cover} gives the upper cover and the cyclic
fiber gives the lower bound.  If $7\nmid d$, then
$\binom{d+6}{6}$ is divisible by seven and~\eqref{eq:q7-eventual-cost} has
the same integer part as $L_7(d)$.  If $7\mid d$, write
$\binom{d+6}{6}=7m+1$; the cover cost is strictly below $m+2$.  Integrality
gives $\alpha_7(d)\le L_7(d)$ in both cases.
\end{proof}

\begin{remark}[The nonuniform small-degree regime]
The eventual checksum phenomenon should not be expected to be uniform in the
alphabet size.  When $d<q$, a large proportion of the simplex lies near its
boundary, and constructions arising from label groups other than
$\mathbb Z_q$ can outperform the cyclic checksum fiber.  This already occurs
at the exceptional degrees $2$ and $4$ for $q=5$.  For $q=7$, the exact
small-degree values at $d=2,3,4$ also exceed the checksum value, and the
nonzero $\mathbb F_2^3$ labeling gives an independent set of size $133$ at
$d=6$, whereas the cyclic checksum fiber has size $132$.  These examples
suggest that further sporadic gaps may occur in the regime $d<q$, even when
the prime-checksum conjecture is eventually true for each fixed prime $q$.
The fixed-degree orbit reduction in Section~\ref{sec:fixed-degree} is designed
to study precisely this complementary regime.
\end{remark}

\section{Additive targets and the scope of the method}
\label{sec:general-framework}

The finite-state theorem is independent of the lower construction, but a sharp
application needs a target polynomial or quasi-polynomial.  Additive colorings
provide such targets and explain the residue classes that appear in the local
programs.  We record two forms that will also be used in the fixed-degree
comparison.

\subsection{Balance of the cyclic checksum}
The construction $\mathcal C_{q,d}(r)$ from
\eqref{eq:checksum-classes} provides an explicit independent set.  Its exact
class sizes are given in~\cite{KreindelEssayagZabokritskiy2026a}.  Put
\begin{equation}
 B_q(d)=\frac1q\binom{d+q-1}{q-1}
       =\frac1q|\Delta_q(d)|.
 \label{eq:checksum-average}
\end{equation}
For $m\ge1$, let
\[
 c_m(r)=\sum_{\substack{1\le k\le m\\(k,m)=1}}
          \exp\!\left(\frac{2\pi\mathrm i kr}{m}\right)
\]
be the Ramanujan sum.  Then
\begin{equation}
 N_r^{(q)}(d)
 =\frac1q\sum_{m\mid\gcd(q,d)}
   c_m(r)\binom{d/m+q/m-1}{q/m-1}.
 \label{eq:exact-checksum-balance}
\end{equation}
In particular, if $q$ is prime,
\begin{equation}
 N_r^{(q)}(d)=
 \begin{cases}
  \displaystyle\frac1q\binom{d+q-1}{q-1},
     &q\nmid d,\\[7pt]
  \displaystyle\frac1q\left[\binom{d+q-1}{q-1}+q-1\right],
     &q\mid d,\ r=0,\\[7pt]
  \displaystyle\frac1q\left[\binom{d+q-1}{q-1}-1\right],
     &q\mid d,\ r\ne0.
 \end{cases}
 \label{eq:prime-checksum-balance}
\end{equation}
Consequently,
\begin{equation}
 \max_{r\in\mathbb Z_q}N_r^{(q)}(d)=\left\lceil B_q(d)\right\rceil
 \qquad(q\text{ prime}).
 \label{eq:prime-checksum-maximum}
\end{equation}
If $p_0$ is the smallest prime divisor of $q$, then
\begin{equation}
 N_r^{(q)}(d)=B_q(d)+O_q\!\left(d^{q/p_0-1}\right)
 \qquad(d\to\infty).
 \label{eq:checksum-asymptotic-balance}
\end{equation}
The checksum construction and the one-deletion upper bound of
Kova\v{c}evi\'c and Tan~\cite{KovacevicTan2018} give
\begin{equation}
 \max_{r\in\mathbb Z_q}N_r^{(q)}(d)
 \le \alpha_q(d)
 \le \frac1q\binom{d+q}{q-1}
 =B_q(d)+\frac1q\binom{d+q-1}{q-2}.
 \label{eq:checksum-KT-sandwich}
\end{equation}
The left side describes a concrete independent set; the right side bounds all
independent sets.

\subsection{Other additive label groups}
The label group changes the lower-order terms of the target when $q$ is
composite.  Write
\[
 q=\prod_{p\mid q}q_p,\qquad q_p=p^{e_p},
 \qquad
 A_q=\bigoplus_{p\mid q}\mathbb F_p^{e_p}.
\]
For $S\subseteq\{p:p\mid q\}$, put
\[
 m_S=\prod_{p\in S}p,
 \qquad
 \gamma_S=\prod_{p\in S}(q_p-1),
\]
with $m_\varnothing=\gamma_\varnothing=1$.

\begin{proposition}[Largest additive fiber]
\label{prop:general-zero-fiber}
For the coloring of $\Delta_q(d)$ by $A_q$, the zero fiber has cardinality
\begin{equation}
 M_q(d)=\frac1q
 \sum_{\substack{S\subseteq\{p:p\mid q\}\\m_S\mid d}}
 \gamma_S
 \binom{d/m_S+q/m_S-1}{q/m_S-1}.
 \label{eq:general-zero-fiber}
\end{equation}
It is a largest color class.  In particular,
$\alpha_q(d)\ge M_q(d)$.
\end{proposition}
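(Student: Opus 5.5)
The plan is to compute every fiber size of the $A_q$-coloring by character orthogonality. The zero fiber will then appear as the one where all character contributions add with the same sign.

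Write $A=A_q$ and label the coordinates by its $q$ distinct elements $g_1,\ldots,g_q$. Let $\widehat A$ be its character group. For $s\in A$, orthogonality gives
\[
 |I_s(d)|=\frac1q\sum_{\psi\in\widehat A}\overline{\psi(s)}\,c_\psi(d),
 \qquad
 c_\psi(d)=\sum_{a\in\Delta_q(d)}\psi(\chi(a))
 =[t^d]\prod_{i=1}^q\frac1{1-\psi(g_i)t}.
\]
The generating-function identity holds because $\psi(\chi(a))=\prod_i\psi(g_i)^{a_i}$.

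Next I evaluate $c_\psi(d)$.

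1. Since $A=\bigoplus_{p\mid q}\mathbb F_p^{e_p}$, the character $\psi$ factors as $\prod_p\psi_p$. Each nontrivial $\psi_p$ has order exactly $p$, because $\mathbb F_p^{e_p}$ has exponent $p$.
2. Let $S=\{p:\psi_p\ne1\}$ be the support of $\psi$. Then $\psi$ has order $m_S$ and is a surjective homomorphism $A\to\mu_{m_S}$.
3. The labels $g_i$ run over all of $A$, so each $m_S$-th root of unity occurs as $\psi(g_i)$ exactly $q/m_S$ times.
4. Using $\prod_{\zeta\in\mu_m}(1-\zeta t)=1-t^m$, the product becomes $(1-t^{m_S})^{-q/m_S}$.
5. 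Therefore $c_\psi(d)=\binom{d/m_S+q/m_S-1}{q/m_S-1}$ when $m_S\mid d$, and $c_\psi(d)=0$ otherwise.

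To group the characters by support, note that $\mathbb F_p^{e_p}$ has $q_p-1$ nontrivial characters. Hence exactly $\gamma_S$ characters of $A$ have support $S$.

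For $s=0$ every factor $\overline{\psi(0)}$ equals $1$. Substituting and grouping by $S$ gives~\eqref{eq:general-zero-fiber} directly.

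For the maximality claim, observe that every $c_\psi(d)$ is a nonnegative real number. The triangle inequality then gives, for every $s$,
\[
 |I_s(d)|=\Bigl|\frac1q\sum_\psi\overline{\psi(s)}c_\psi(d)\Bigr|
 \le\frac1q\sum_\psi c_\psi(d)=|I_0(d)|=M_q(d).
\]
So the zero fiber is a largest color class. Each fiber is independent because a unit transfer changes $\chi$ by $g_i-g_j\ne0$, as shown in Section~\ref{sec:pre}. This yields $\alpha_q(d)\ge M_q(d)$.

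The only step needing care is the uniform-distribution claim in step 3. It uses that the labeling is a bijection onto $A$, so each fiber of the surjection $\psi\colon A\to\mu_{m_S}$ has size $q/m_S$. It also uses that the order of $\psi$ is the product of the primes in its support, which holds because each component of $A_q$ is elementary abelian. Everything else is routine bookkeeping.
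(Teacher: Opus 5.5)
Your proof is correct and follows the paper's argument. You filter the generating function $\prod_i(1-z\psi(g_i))^{-1}$ by characters, note that a character supported on $S$ has order $m_S$ and turns the product into $(1-z^{m_S})^{-q/m_S}$, count the $\gamma_S$ such characters, and get maximality from nonnegativity of the contributions plus the triangle inequality, while also making explicit the step the paper leaves implicit: that a surjective $\psi\colon A_q\to\mu_{m_S}$ takes each value exactly $q/m_S$ times on the bijectively labeled coordinates.
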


\begin{proof}
Apply the character filter to the generating function of nonnegative
multiplicity vectors.  A character whose nontrivial Sylow components are
indexed by $S$ has order $m_S$, and
\[
 \prod_{a\in A_q}(1-z\psi(a))^{-1}
 =(1-z^{m_S})^{-q/m_S}.
\]
There are $\gamma_S$ such characters.  Extracting the coefficient of $z^d$
gives~\eqref{eq:general-zero-fiber}.  In the zero fiber all character
contributions have positive sign, and the triangle inequality shows that no
other fiber is larger.
\end{proof}

Proposition~\ref{prop:general-zero-fiber} has only two regimes when
$q=p^m$ is a prime power:
\begin{equation}
 M_q(d)=
 \begin{cases}
 \displaystyle\frac1q\binom{d+q-1}{q-1},&p\nmid d,\\[8pt]
 \displaystyle\frac1q\left[
 \binom{d+q-1}{q-1}+(q-1)
 \binom{d/p+q/p-1}{q/p-1}\right],&p\mid d.
 \end{cases}
 \label{eq:prime-power-target}
\end{equation}
For $q=4$, these are exactly the odd- and even-degree targets in
Section~\ref{sec:q4frac}.  In general,
\begin{equation}
 M_q(d)=\frac1q\binom{d+q-1}{q-1}
       +O_q\bigl(d^{q/p_0-1}\bigr).
 \label{eq:general-target-asymptotic}
\end{equation}

Four exact-density tile families occur in the fixed-alphabet applications.  The
ternary proof uses full simplices of residual degrees
$1,5,7,8$; the quaternary proof uses two oriented cliques in odd degree and
full simplices of degrees $1,3$ in even degree; the eventual five-variable proof uses the full simplices of residual degrees
in~\eqref{eq:q5-residual-set}; and the seven-variable
proof uses the mixed orbit-union templates in Section~\ref{sec:q7}.  The examples suggest
that the main remaining difficulty is not state finiteness, which follows from
Theorem~\ref{thm:finite-state-reduction}, but template selection.

\begin{conjecture}[Finite exact-density tile families]
\label{conj:finite-tile}
For every fixed $q$, there is a finite coordinate-symmetric family of induced
templates of independence density $1/q$ such that, in every divisibility
regime of~\eqref{eq:general-zero-fiber}, the associated finite-state program
certifies
\[
 \alpha_q(d)=M_q(d)
\]
for all sufficiently large $d$, apart from finitely many exceptional degrees.
\end{conjecture}

The conjecture permits shapes other than full simplices.  The four-variable
odd-degree proof needs a reflected orientation, while the seven-variable proof
uses the mixed orbit-union templates $H_4$ and $H_6$.  Thus mixed tile families are
not merely experimental: they are already essential in a sharp
higher-dimensional application.

\section{Orbit reductions at fixed degree}
\label{sec:fixed-degree}

The previous section keeps \(q\) fixed and lets the ambient degree grow.  A
second reduction is available in the opposite direction: fix the degree \(d\)
and exploit the full permutation symmetry of the \(q\) coordinates.  In
coding language, independent sets in \(G_q(d)\) are constant \(\ell_1\)-weight
\(d\) codes of minimum \(\ell_1\)-distance four.  The exact values for
weights at most four were determined by Chen, Ma, and
Zhang~\cite{ChenMaZhang2021}.  We first recover those values as explicit
weighted-tiling certificates, then solve the degree-five orbit program and
obtain exact power-of-two families in degrees six, eight, and ten.  We finally
derive a three-term fixed-degree asymptotic expansion, together with a
conjectural hierarchy extending it.  Thus the numerical results in
Theorem~\ref{thm:small-fixed-degree} are known; the exact power-of-two families
and the subsequent symbolic LP consequences are new applications of the
weighted-tiling formalism.

The two tile families used in this section arise naturally from the deletion
geometry.  An anchor
\(a\in\Delta_q(d-1)\) is a possible one-deletion output, and
\(a+\Delta_q(1)\) is the set of all \(q\) words obtained by reinserting one
symbol.  These words are pairwise confusable, so they form a \(K_q\) and give
the canonical inequality \(|I\cap(a+\Delta_q(1))|\le1\).  Each such clique
has density \(1/q\), matching the additive lower construction.  Moreover, a vertex \(x\)
lies precisely in the cliques anchored at \(x-e_i\) for its positive
coordinates, so the coverage equations are predecessor equations and
coordinate symmetry groups them by integer partitions.  On the squarefree
layer, supports are \(k\)-subsets and a unit transfer is a one-element
exchange; the induced graph is therefore the Johnson graph \(J(q,k)\), whose
independence number is the corresponding packing number.

For integers \(v\ge k\ge t\), let \(D(v,k,t)\) be the maximum number of
\(k\)-subsets of a \(v\)-set such that no \(t\)-subset is contained in more
than one block, and set \(D(v,k,t)=0\) when \(v<k\).  Equivalently, with the
empty Johnson layer understood when \(v<k\),
\[
 D(v,k,k-1)=\alpha(J(v,k)),
\]
where \(J(v,k)\) is the Johnson graph in which two \(k\)-subsets are adjacent
when they meet in \(k-1\) points.

\begin{theorem}[The exact values in degrees two, three, and four]
\label{thm:small-fixed-degree}
For every \(q\ge2\),
\begin{align}
 \alpha_q(2)&=q,\label{eq:fixed-d2}\\
 \alpha_q(3)&=q+D(q,3,2),\label{eq:fixed-d3}\\
 \alpha_q(4)&=q+\binom q2+D(q,4,3).\label{eq:fixed-d4}
\end{align}
\end{theorem}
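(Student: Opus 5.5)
For each degree the plan is a lower construction by a union of orbit layers, plus a matching upper bound from Theorem~\ref{thm:weighted-tile-bound} using upward cliques $a+\Delta_q(1)$ and squarefree Johnson layers. Throughout, the vertex $x$ lies exactly in the upward cliques anchored at $x-e_i$ with $x_i>0$. All weights are chosen constant on partition types of the anchor, so every coverage condition reduces to one inequality per partition type of $x$.

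\emph{Degree two.} The corners $\{2e_i\}$ are pairwise at $\ell_1$-distance four, so they form an independent set and $\alpha_q(2)\ge q$. For the upper bound, give weight one to each clique $e_i+\Delta_q(1)$. The vertex $2e_i$ is covered once and $e_i+e_j$ is covered twice, so the cost is $q$. This is the construction already used for $q=3$.

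\emph{Degree three.} The lower bound takes the corners $3e_i$ together with a maximum packing $\mathcal P$ of triples in which no pair lies in two blocks, embedded as the words $\mathbf 1_S$ for $S\in\mathcal P$. Two squarefree words are adjacent iff they share two points, and $3e_i$ is at distance at least four from every other chosen word. This gives $q+D(q,3,2)$.

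For the upper bound I use three families:
\begin{itemize}
\item the cliques $2e_i+\Delta_q(1)$ with weight $1$, which cover $3e_i$ and $2e_i+e_j$;
\item the cliques $e_i+e_j+\Delta_q(1)$ with weight $1$, which cover every $(2,1)$ word twice and every $(1^3)$ word three times;
\item the squarefree layer $J(q,3)$ as a single tile of capacity $D(q,3,2)$.
\end{itemize}
The first two families alone cost $q+\binom q2$, which is too much. I would therefore try to replace the pair-anchored cliques by the Johnson tile together with cliques covering the $(2,1)$ layer. The aggregate inequality I aim for is that the $(2,1)$ words and the corners contribute at most $q$ in total plus the pair-shadow loss. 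Concretely, each pair $\{i,j\}$ carries the clique $\{2e_i+e_j,\,2e_j+e_i\}\cup\{\mathbf 1_{\{i,j,k\}}\}$, since all of these are pairwise adjacent. These $\binom q2$ cliques together with the corner cliques $2e_i+\Delta_q(1)$ should combine into the bound $|I|\le q+D(q,3,2)$. To get it, I would count pairs: each selected triple consumes three pairs, each $(2,1)$ word consumes one pair, and each corner clique forbids its $(2,1)$ neighbours. This is a direct charging argument rather than a pure LP. The idea is to show that exchanging any $(2,1)$ word for the corner or for a triple does not decrease the size, reducing to independent sets supported on corners and squarefree words.

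\emph{Degree four.} The lower bound takes the corners $4e_i$, all words $2e_i+2e_j$, and a packing of $4$-sets in which no triple lies in two blocks. These are pairwise nonadjacent: $2e_i+2e_j$ and $2e_i+2e_k$ differ by $2(e_j-e_k)$, and the other pairs are checked the same way. This gives $q+\binom q2+D(q,4,3)$.

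The upper bound uses the same exchange and charging idea.
\begin{itemize}
\item The clique $3e_i+\Delta_q(1)$ bounds the types $(4)$ and $(3,1)$ with heavy coordinate $i$ by one per $i$.
\item The words of types $(2,2)$ and $(2,1,1)$ are controlled by the cliques $e_i+e_j+\Delta_q(1)$ restricted through the shadow on pairs.
\item The layer $(1^4)$ is bounded by $D(q,4,3)$.
\end{itemize}

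The main obstacle is the interaction between the $(2,1,1)$ layer and both the $(2,2)$ and the squarefree layers. I expect to need a local matching argument like the one in Lemma~\ref{lem:q7-local-tiles} for $H_6$ and $H_{12}$, in which the $(2,2)$ words are treated as the edges of a graph on the coordinates and parity defects are absorbed. The fallback is to cite the Chen--Ma--Zhang determination~\cite{ChenMaZhang2021} for the delicate charging step.
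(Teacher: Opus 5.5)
\textbf{There is a genuine gap: the upper bounds in degrees three and four are not actually proved.} Your degree-two cover and all three lower constructions are correct and coincide with the paper's.

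\emph{Degree three.} You already list the two families that suffice, so only the last step is missing. The $q$ cliques $2e_i+\Delta_q(1)$ are pairwise disjoint and together contain every vertex of types $(3)$ and $(2,1)$, since the heavy coordinate determines $i$. The squarefree vertices form the single tile $J(q,3)$ of capacity $D(q,3,2)$. With unit weights these give an exact tiling ($F\equiv1$) of cost $q+D(q,3,2)$, which is the paper's proof. Your second family is superfluous: the sets $\{2e_i+e_j,2e_j+e_i\}\cup\{\mathbf 1_{\{i,j,k\}}\}$ you introduce are just the cliques $e_i+e_j+\Delta_q(1)$ again. The pair-counting and charging argument is never carried out. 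The exchange $2e_i+e_j\mapsto 3e_i$ would work, but only via the same fact: the neighbourhood of $3e_i$ is the clique $\{2e_i+e_j:j\ne i\}$.

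\emph{Degree four.} Here the gap is more serious, for three reasons. First, the cliques $e_i+e_j+\Delta_q(1)$ you propose lie in $\Delta_q(3)$, not $\Delta_q(4)$. Second, the ``interaction'' between the $(2,1,1)$ layer and the $(2,2)$ and squarefree layers, which you call the main obstacle, never needs to be controlled. Theorem~\ref{thm:weighted-tile-bound} only requires $F\ge1$ and ignores edges between different tiles, so no $H_6$/$H_{12}$-type matching or parity argument is needed. Third, deferring to Chen--Ma--Zhang is a citation, not a proof.

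The missing certificate is as follows. Give weight $1$ to each clique $3e_i+\Delta_q(1)$. Give weight $\tfrac12$ to each of the $q(q-1)$ cliques $2e_i+e_j+\Delta_q(1)$ with $i\ne j$. Give weight $1$ to the squarefree tile $J(q,4)$. The coverages are then:
\begin{itemize}
\item $4e_i$ lies only in $3e_i+\Delta_q(1)$, so it has coverage $1$;
\item $3e_i+e_j$ lies in $3e_i+\Delta_q(1)$ and in the clique anchored at $2e_i+e_j$, so it has coverage $\tfrac32$;
\item $2e_i+2e_j$ lies in the half-weight cliques anchored at $2e_i+e_j$ and $2e_j+e_i$, so it has coverage $1$;
\item $2e_i+e_j+e_k$ lies in the half-weight cliques anchored at $2e_i+e_j$ and $2e_i+e_k$, so it has coverage $1$;
\item squarefree words lie in $J(q,4)$, so they have coverage $1$.
\end{itemize}
The total cost is $q+\tfrac12 q(q-1)+D(q,4,3)=q+\binom q2+D(q,4,3)$, which matches your lower construction.
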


\begin{proof}
For degree two, use the \(q\) upward cliques
\[
 T_i=e_i+\Delta_q(1),\qquad i\in[q],
\]
with unit weights.  A corner \(2e_i\) has coverage one and a mixed vertex
\(e_i+e_j\) has coverage two.  Since every tile is a \(K_q\), the cost is
 \(q\).  The \(q\) corners form an independent lower construction.

For degree three, the \(q\) disjoint cliques
\[
 C_i=2e_i+\Delta_q(1)
\]
cover all vertices of types \((3)\) and \((2,1)\).  The remaining squarefree
vertices form a single tile isomorphic to \(J(q,3)\).  This is an integral
zero-defect tiling of total cost \(q+D(q,3,2)\).  Equality is achieved by the
\(q\) corners together with the incidence vectors of an optimal
\(2\)-\((q,3,1)\) packing.

For degree four, give weight one to each clique
\[
 3e_i+\Delta_q(1),
\]
weight \(1/2\) to each clique
\[
 2e_i+e_j+\Delta_q(1),\qquad i\ne j,
\]
and weight one to the squarefree layer, which is isomorphic to \(J(q,4)\).
The coverage by vertex type is
\[
\begin{array}{c|ccccc}
\text{type}&(4)&(3,1)&(2,2)&(2,1,1)&(1^4)\\ \hline
F&1&3/2&1&1&1.
\end{array}
\]
Thus the cost is
\[
 q+\frac12q(q-1)+D(q,4,3)
 =q+\binom q2+D(q,4,3).
\]
The matching construction consists of all corners \(4e_i\), all vectors
\(2e_i+2e_j\), and the squarefree vectors associated with an optimal
\(3\)-\((q,4,1)\) packing.
\end{proof}

The preceding proofs combine upward cliques, a Johnson layer, and
fractionally weighted overlaps.  The same symmetry gives a finite linear
program in every fixed degree.
Let \(\mathcal P_n\) be the set of integer partitions of \(n\).  If
\(\lambda\in\mathcal P_n\), write \(\ell(\lambda)\) for its number of parts
and \(m_s(\lambda)\) for the multiplicity of the part \(s\).  For \(q\ge2\), the number of exponent vectors having positive-part multiset
\(\lambda\) is
\begin{equation}
 N_q(\lambda)=|\mathcal O_q(\lambda)|
 =\frac{(q)_{\ell(\lambda)}}{\prod_{s\ge1}m_s(\lambda)!},
 \label{eq:orbit-size-partition}
\end{equation}
where \((q)_j=q(q-1)\cdots(q-j+1)\); in particular, \(N_q(\lambda)=0\) when \(\ell(\lambda)>q\), consistently with \(\mathcal O_q(\lambda)=\varnothing\).
For \(\mu\in\mathcal P_d\) and a part value \(s\) occurring in \(\mu\), let
\(\operatorname{pred}_s(\mu)\in\mathcal P_{d-1}\) be the predecessor
partition obtained by replacing one part \(s\) by \(s-1\), and deleting that
part when it becomes zero.

\begin{proposition}[The fixed-degree orbit LP]
\label{prop:fixed-degree-orbit-lp}
Assign the same weight \(w_\lambda\) to every upward clique
\(a+\Delta_q(1)\) whose anchor \(a\) has type
\(\lambda\in\mathcal P_{d-1}\).  Then the best bound obtainable from these
cliques is
\begin{equation}
\begin{aligned}
 \text{minimize}\quad&
   \sum_{\lambda\in\mathcal P_{d-1}}N_q(\lambda)w_\lambda,\\
 \text{subject to}\quad&
   \sum_{s\ge1}m_s(\mu)w_{\operatorname{pred}_s(\mu)}\ge1
   &&(\mu\in\mathcal P_d),\\
 &w_\lambda\ge0&&(\lambda\in\mathcal P_{d-1}).
\end{aligned}
\label{eq:fixed-degree-orbit-lp}
\end{equation}
For \(q\ge d\), this LP has \(p(d-1)\) variables and \(p(d)\) coverage
constraints, independent of \(q\); only its polynomial objective depends on
\(q\).
\end{proposition}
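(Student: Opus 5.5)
The plan is to specialize Theorem~\ref{thm:weighted-tile-bound} to the family of all upward cliques $a+\Delta_q(1)$ with $a\in\Delta_q(d-1)$, and then to quotient it by coordinate symmetry using Corollary~\ref{cor:symmetry-quotient} with $\Gamma=S_q$.

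The first step is to compute the coverage of a vertex. A vertex $x$ lies in $a+\Delta_q(1)$ exactly when $a=x-e_i$ for some $i$ with $x_i\ge1$. Distinct such $i$ give distinct anchors, so
\[
 F(x)=\sum_{i:\,x_i\ge1}w_{x-e_i}.
\]
Under the constraint that $w_a=w_\lambda$ whenever $a$ has type $\lambda$, the anchor $x-e_i$ has type $\operatorname{pred}_{x_i}(\mu)$, where $\mu$ is the type of $x$. Grouping the positive coordinates of $x$ by their common value $s$ gives exactly $m_s(\mu)$ terms equal to $w_{\operatorname{pred}_s(\mu)}$. Hence $F(x)$ depends only on $\mu$, and the covering inequalities reduce to one inequality per $\mu\in\mathcal P_d$. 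Only partitions $\mu$ with $\ell(\mu)\le q$ actually occur. For $q\ge d$ every partition occurs. For $q<d$ the extra rows and variables are vacuous, because $N_q=0$ and these constraints concern nonexistent vertices; I would remark on this explicitly.

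The second step is the cost. Each clique has $\alpha=1$, and there are $N_q(\lambda)$ anchors of type $\lambda$, which gives the objective $\sum_\lambda N_q(\lambda)w_\lambda$. The orbit sizes $N_q(\lambda)$ are the standard count from \eqref{eq:orbit-size-partition}: choose the ordered positions of the $\ell(\lambda)$ parts, then divide by the permutations of equal parts.

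The third step is to show that this symmetric LP is the best bound obtainable from these cliques. This is the one point needing care, though it is routine. Any feasible weighting $w$ of the full clique family can be averaged over $S_q$. Since $S_q$ permutes anchors and maps cliques to cliques, the averaged weighting is still feasible and has the same cost; this is the averaging argument preceding Corollary~\ref{cor:symmetry-quotient}. The averaged weights are constant on anchor types, so the optimum of \eqref{eq:abstract-tile-lp} for this family equals the optimum of \eqref{eq:fixed-degree-orbit-lp}.

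Finally, for $q\ge d$ every partition of $d-1$ and of $d$ has at most $q$ parts. The numbers of variables and rows are therefore $p(d-1)$ and $p(d)$, and the coefficients $m_s(\mu)$ are independent of $q$. Only $N_q(\lambda)$, a polynomial in $q$, enters the objective.
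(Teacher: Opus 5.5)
Your argument is essentially the paper's. The rows come from counting the $m_s(\mu)$ predecessors $x-e_i$ with $x_i=s$, and the objective comes from the orbit count~\eqref{eq:orbit-size-partition}. Your explicit $S_q$-averaging step is a good addition: it is what actually justifies ``the best bound obtainable from these cliques,'' and the paper leaves it to the averaging remark before Corollary~\ref{cor:symmetry-quotient}.

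One side remark needs correcting: for $q<d$ the extra rows are not all vacuous.

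\begin{itemize}
\item A row $\mu$ with $\ell(\mu)>q$ and some part $s\ge2$ is indeed free. The variable $\operatorname{pred}_s(\mu)$ still has more than $q$ parts, so it has $N_q=0$. Such variables occur only in extra rows, so they can be made large at no cost.
\item The row $\mu=1^d$ is different. Its only predecessor is $1^{d-1}$, and when $q=d-1$ one has $N_q(1^{d-1})=1$. The formal row $d\,w_{1^{d-1}}\ge1$ then costs at least $1/d$ and can change the optimum.
\end{itemize}

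For instance, at $q=2$, $d=3$ the formal LP has value $7/3$. However, the two cliques anchored at $(2,0)$ and $(0,2)$ already cover $G_2(3)$ at cost $2=\alpha_2(3)$. This is the only situation in which an extra row matters.

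So for $q<d$, the exact best-bound LP is obtained by deleting the rows with $\ell(\mu)>q$. Keeping them gives a bound that is still valid but possibly weaker, which is how the paper uses them in Theorem~\ref{thm:power-two-even}. For $q\ge d$, the regime in which the proposition counts variables and rows, your proof is complete.
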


\begin{proof}
A vertex of type \(\mu\) has one predecessor for every positive coordinate.
Reducing a coordinate whose value is \(s\) produces an anchor of type
\(\operatorname{pred}_s(\mu)\), and there are \(m_s(\mu)\) such coordinates.
This gives
the coverage constraints.  Formula~\eqref{eq:orbit-size-partition} counts the
anchors in each orbit and hence gives the objective.
\end{proof}

The first degree not covered by the exact small-weight theory is five.  The
linear program in Proposition~\ref{prop:fixed-degree-orbit-lp} can nevertheless
be solved in closed form.

\begin{proposition}[The degree-five orbit LP]
\label{prop:degree-five-orbit-lp}
For every \(q\ge7\), the optimum of
\eqref{eq:fixed-degree-orbit-lp} with \(d=5\) is
\begin{equation}
 C_{q,5}^{\mathrm{orb}}
 =\frac{q(q^3+10q^2+45q+64)}{120}.
 \label{eq:d5-orbit-cost}
\end{equation}
It is attained by the anchor-type weights
\begin{equation}
 (w_4,w_{31},w_{22},w_{211},w_{1^4})
 =\left(1,\frac{11}{30},\frac{19}{30},\frac4{15},\frac15\right).
 \label{eq:d5-orbit-weights}
\end{equation}
Consequently,
\[
 \alpha_q(5)\le\left\lfloor C_{q,5}^{\mathrm{orb}}\right\rfloor
 \qquad(q\ge7).
\]
\end{proposition}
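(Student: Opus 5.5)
The plan is to write out the five-variable, seven-constraint program \eqref{eq:fixed-degree-orbit-lp} for $d=5$ explicitly, check that the stated weights are feasible with the claimed cost, and certify optimality with an explicit dual solution chosen by complementary slackness. The restriction $q\ge7$ should come out of the dual feasibility condition.

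\emph{Setting up the program.} The anchor types are $\mathcal P_4=\{(4),(3,1),(2,2),(2,1,1),(1^4)\}$. Using $\operatorname{pred}_s$ and the multiplicities $m_s(\mu)$, the coverage rows for $\mu\in\mathcal P_5$ are:
\[
\begin{aligned}
&(5):\ w_4\ge1, &&(4,1):\ w_4+w_{31}\ge1,\\
&(3,2):\ w_{22}+w_{31}\ge1, &&(3,1,1):\ w_{211}+2w_{31}\ge1,\\
&(2,2,1):\ w_{22}+2w_{211}\ge1, &&(2,1^3):\ w_{1^4}+3w_{211}\ge1,\\
&(1^5):\ 5w_{1^4}\ge1.
\end{aligned}
\]
Substituting \eqref{eq:d5-orbit-weights} makes the rows $(5),(3,2),(3,1,1),(2,1^3),(1^5)$ tight. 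The two remaining rows give $41/30$ and $35/30$, so the weights are feasible.

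\emph{Primal cost.} With $N_q(\lambda)$ from \eqref{eq:orbit-size-partition}, the objective is
\[
q+\tfrac{11}{30}q(q-1)+\tfrac{19}{60}q(q-1)+\tfrac{2}{15}q(q-1)(q-2)+\tfrac{1}{120}q(q-1)(q-2)(q-3).
\]
Expanding this should give \eqref{eq:d5-orbit-cost}; it is a routine polynomial identity.

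\emph{Optimality via the dual.} The dual has variables $y_\mu\ge0$ for $\mu\in\mathcal P_5$, one constraint per $\lambda\in\mathcal P_4$, namely $\sum_{\mu,s:\operatorname{pred}_s(\mu)=\lambda}m_s(\mu)y_\mu\le N_q(\lambda)$, and objective $\sum_\mu y_\mu$. Complementary slackness suggests two things. Since the rows $(4,1)$ and $(2,2,1)$ are slack, set $y_{41}=y_{221}=0$. Since every $w_\lambda>0$, make all five dual constraints equalities. The resulting system is triangular and solves to
\[
y_5=q,\quad y_{32}=\tfrac{q(q-1)}2,\quad y_{311}=\tfrac{q(q-1)}4,\quad y_{2111}=\tfrac{q(q-1)(2q-5)}{12},\quad y_{1^5}=\tfrac15\Bigl(\tbinom q4-y_{2111}\Bigr).
\]

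\emph{Main obstacle: dual feasibility.} The only nontrivial step is showing $y_{1^5}\ge0$. This is equivalent to $(q-2)(q-3)\ge2(2q-5)$, that is, $q^2-9q+16\ge0$. That inequality holds exactly for integers $q\ge7$, which explains the hypothesis. Given nonnegativity, the dual objective equals the primal cost by complementary slackness (it can also be checked directly), so weak duality proves optimality.

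\emph{Conclusion.} By Theorem~\ref{thm:weighted-tile-bound} applied to the clique cover, $\alpha_q(5)\le C^{\mathrm{orb}}_{q,5}$. Integrality of $\alpha_q(5)$ then gives the floor.
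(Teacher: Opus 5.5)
Your proposal is correct and is essentially the paper's proof: you use the same coverage check ($1,\tfrac{41}{30},1,1,\tfrac76,1,1$), the same dual certificate with $y_{41}=y_{221}=0$ and $y_{1^5}=\frac15\bigl(\binom q4-y_{2111}\bigr)=\frac{q(q-1)(q^2-9q+16)}{120}$, and duality (weak duality suffices, as you note). A minor aside: $q^2-9q+16\ge0$ also holds at $q=2$, where instead $y_{2111}<0$, so ``exactly for $q\ge7$'' is true only among $q\ge3$; this does not affect the argument for $q\ge7$.
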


\begin{proof}
The seven coverage values correspond, in order, to the vertex types
\[
 (5),\ (4,1),\ (3,2),\ (3,1,1),\ (2,2,1),\ (2,1,1,1),\ (1^5),
\]
and are
\[
 1,\quad \frac{41}{30},\quad1,\quad1,\quad\frac76,\quad1,\quad1.
\]
Thus~\eqref{eq:d5-orbit-weights} is feasible, and substitution in the orbit
counts gives~\eqref{eq:d5-orbit-cost}.

For optimality, use the dual variables
\begin{align*}
 y_{(5)}&=q,&
 y_{(3,2)}&=\binom q2,&
 y_{(3,1,1)}&=\frac12\binom q2,\\
 y_{(2,1,1,1)}&=\frac{q(q-1)(2q-5)}{12},&&
 y_{(1^5)}&=\frac{q(q-1)(q^2-9q+16)}{120},
\end{align*}
with the other two dual variables equal to zero.  These numbers are
nonnegative for \(q\ge7\).  Direct substitution shows that all five dual
constraints are met with equality and that the dual objective is
\eqref{eq:d5-orbit-cost}.  Strong duality completes the proof.
\end{proof}

The value in Proposition~\ref{prop:degree-five-orbit-lp} is not always sharp
as a bound for the full problem.  For \(q=5\) it gives
\(27\).  Adding five translated Johnson tiles produces the exact value
without exhaustive search.
With the cheaper upward-clique weights used below, the types \((3,1,1)\) and
\((2,1,1,1)\) each have coverage \(1/2\).  Every vertex of either type lies
in exactly one tile \(Q_i\), so assigning each \(Q_i\) weight \(1/2\) fills
precisely these two deficits.  Since \(Q_i\cong J(5,3)\) has ten vertices and
independence number two, this repair has density \(1/5\).

\begin{proposition}[A degree-five certificate for five variables]
\label{prop:q5-d5-analytic}
\[
 \alpha_5(5)=26.
\]
\end{proposition}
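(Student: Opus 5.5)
The lower bound $\alpha_5(5)\ge26$ comes from the cyclic checksum. Since $|\Delta_5(5)|=126$ and $5\nmid5$ fails (i.e.\ $5\mid d$), \eqref{eq:prime-checksum-maximum} gives a fiber of size $\lceil126/5\rceil=26$. For the upper bound I will exhibit an explicit fractional local cover of cost strictly below $27$ and invoke Theorem~\ref{thm:weighted-tile-bound} together with integrality. The cover combines the upward cliques of Proposition~\ref{prop:fixed-degree-orbit-lp} with five Johnson tiles.

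\textbf{The Johnson tiles.} For $i\in[5]$ put $Q_i=2e_i+\mathcal O_5(1^3)$.
\begin{itemize}
\item Two words $2e_i+\mathbf 1_S$ and $2e_i+\mathbf 1_T$ differ by a unit transfer exactly when $|S\cap T|=2$, so $Q_i\cong J(5,3)$. By complementation this is $J(5,2)$, whose independence number is the matching number $2$ of $K_5$.
\item If $i\in S$, the word $2e_i+\mathbf 1_S$ has type $(3,1,1)$ with heavy coordinate $i$. If $i\notin S$, it has type $(2,1,1,1)$ with doubled coordinate $i$.
\end{itemize}
Hence every vertex of these two types lies in exactly one $Q_i$, and no other vertex lies in any $Q_i$. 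Giving each $Q_i$ weight $1/2$ therefore adds coverage exactly $1/2$ on these two types and costs $5\cdot\tfrac12\cdot2=5$.

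\textbf{The clique weights.} It remains to choose orbit weights $w_\lambda$ on upward cliques $a+\Delta_5(1)$, $a\in\Delta_5(4)$, in the LP \eqref{eq:fixed-degree-orbit-lp}. The right-hand sides are relaxed to $1/2$ for the types $(3,1,1)$ and $(2,1,1,1)$. The orbit sizes are $N_5(4)=5$, $N_5(3,1)=20$, $N_5(2,2)=10$, $N_5(2,1,1)=30$ and $N_5(1^4)=5$. I propose
\[
 (w_4,w_{31},w_{22},w_{211},w_{1^4})=\left(1,\tfrac15,\tfrac45,\tfrac1{10},\tfrac15\right).
\]
The predecessor equations give the following coverages by vertex type:
\[
\begin{array}{c|l}
\text{type}&\text{clique coverage}\\ \hline
(5)&w_4=1\\
(4,1)&w_4+w_{31}=6/5\\
(3,2)&w_{31}+w_{22}=1\\
(3,1,1)&w_{31}+2w_{211}=2/5\\
(2,2,1)&w_{22}+2w_{211}=1\\
(2,1,1,1)&w_{211}+3w_{1^4}=7/10\\
(1^5)&5w_{1^4}=1
\end{array}
\]
The $(3,1,1)$ entry is $2/5$, which is below $1/2$. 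So this choice must be adjusted, say to $w_{31}=t$, $w_{22}=1-t$ and $w_{211}=\max(\tfrac12-2t,\tfrac t2)$. The two terms in the maximum balance at $t=1/5$, where $w_{211}=1/10$; checking the constraint $w_{31}+2w_{211}\ge1/2$ here is the step to redo carefully. If it fails, I would take $w_{211}$ just large enough, e.g.\ $w_{211}=3/20$. That keeps $(2,2,1)$ covered, since $w_{22}+2w_{211}=\tfrac45+\tfrac3{10}>1$, and raises the cost by only $30\cdot\tfrac1{20}=1.5$.

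\textbf{Cost and main obstacle.} The clique cost is
\[
5w_4+20w_{31}+10w_{22}+30w_{211}+5w_{1^4}.
\]
Adding the Johnson cost $5$ gives a total near $26$. The main obstacle is tuning the weights so that every type is covered, including $(3,1,1)$ at level $1/2$, while keeping the total strictly below $27$. This is a five-variable LP that I would solve exactly, possibly also letting the $Q_i$ weight exceed $1/2$. Once a total below $27$ is certified, Theorem~\ref{thm:weighted-tile-bound} and integrality give $\alpha_5(5)\le26$, matching the checksum lower bound.
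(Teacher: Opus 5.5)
Your tiles and your first weight vector, $(w_4,w_{31},w_{22},w_{211},w_{1^4})=(1,\frac15,\frac45,\frac1{10},\frac15)$ with each $Q_i$ at weight $\frac12$, are exactly the paper's certificate, and they already work. The argument stalls only because two rows of your coverage table have the predecessor multiplicities swapped.

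The vertex $(3,1,1,0,0)$ lies in the upward cliques anchored at $(2,1,1,0,0)$, $(3,0,1,0,0)$ and $(3,1,0,0,0)$. Its clique coverage is therefore $2w_{31}+w_{211}=\frac25+\frac1{10}=\frac12$, not $w_{31}+2w_{211}$. Likewise $(2,1,1,1,0)$ has one predecessor of type $(1^4)$ and three of type $(2,1,1)$. Its coverage is therefore $w_{1^4}+3w_{211}=\frac15+\frac3{10}=\frac12$, not $w_{211}+3w_{1^4}$. Your own parametrization $w_{211}=\max(\frac12-2t,\frac t2)$ was in fact derived from the correct row $2w_{31}+w_{211}\ge\frac12$, which is why it balances at $t=\frac15$, $w_{211}=\frac1{10}$.

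As written, though, the proof is not complete. You conclude that this vector fails, and your fallback $w_{211}=\frac3{20}$ raises the total cost to $21+\frac32+5=\frac{55}2\ge27$. That only gives $\alpha_5(5)\le27$, and the decisive step is then deferred to an unsolved LP.

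With the correct rows, the Johnson tiles add exactly $\frac12$ to the two deficient types. The combined coverages over $(5),(4,1),(3,2),(3,1,1),(2,2,1),(2,1,1,1),(1^5)$ are $1,\frac65,1,1,1,1,1$. The cost is $5+4+8+3+1+5=26$, so Theorem~\ref{thm:weighted-tile-bound} gives $\alpha_5(5)\le26$ directly, with no further tuning and no need for a strict inequality. As a consistency check, every tile has density $\frac15$, and the only overcoverage is $\frac15$ on the $20$ vertices of type $(4,1)$. The defect identity \eqref{eq:abstract-defect} then gives cost $\frac15(126+4)=26$.
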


\begin{proof}
Give the upward cliques the anchor-type weights
\[
 (w_4,w_{31},w_{22},w_{211},w_{1^4})
 =\left(1,\frac15,\frac45,\frac1{10},\frac15\right).
\]
For each \(i\in[5]\), add the tile
\[
 Q_i=2e_i+
 \left\{e_j+e_k+e_\ell:\{j,k,\ell\}\in\binom{[5]}3\right\}
 \cong J(5,3)
\]
with weight \(1/2\).  Since \(\alpha(J(5,3))=2\), the five Johnson tiles
have total cost five.  The upward-clique cost is twenty-one.  Their combined
coverage, again ordered by the seven partitions of five, is
\[
 1,\quad\frac65,\quad1,\quad1,\quad1,\quad1,\quad1.
\]
Hence \(\alpha_5(5)\le26\).  The additive coloring supplies an independent
set of size \(26\).
\end{proof}

\subsection{Exact power-of-two families in even degrees}
\label{subsec:power-two-even}

The orbit program also gives exact values in larger degrees for infinitely
many alphabet sizes.  Let $q=2^m$, label the coordinates by
$A=\mathbb F_2^m$, and use the zero fiber of the additive coloring.  By
\eqref{eq:prime-power-target}, for even $d$ this is an independent set of size
\begin{equation}
 M_q(d)=\frac1q\left[
 \binom{q+d-1}{d}+(q-1)
 \binom{q/2+d/2-1}{d/2}
 \right].
 \label{eq:power-two-even-target}
\end{equation}
For the first three even degrees beyond the known range, the upward-clique
orbit LP gives the matching upper bound.

\begin{theorem}[Exact power-of-two families]
\label{thm:power-two-even}
For every $m\ge1$, $q=2^m$, and $d\in\{6,8,10\}$,
\begin{equation}
 \alpha_q(d)=\frac1q\left[
 \binom{q+d-1}{d}+(q-1)
 \binom{q/2+d/2-1}{d/2}
 \right].
 \label{eq:power-two-even-values}
\end{equation}
\end{theorem}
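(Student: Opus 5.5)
The lower bound is already available: labelling the coordinates by $A=\mathbb F_2^m$ and taking the zero fiber of the additive coloring gives, by Proposition~\ref{prop:general-zero-fiber} and \eqref{eq:prime-power-target}, an independent set of size $M_q(d)$ as displayed in \eqref{eq:power-two-even-target}. The work is therefore the matching upper bound. We obtain it from the upward-clique orbit program of Proposition~\ref{prop:fixed-degree-orbit-lp}, in the spirit of Proposition~\ref{prop:degree-five-orbit-lp}: for each $d\in\{6,8,10\}$ we exhibit an explicit nonnegative weight vector $(w_\lambda)_{\lambda\in\mathcal P_{d-1}}$ whose cost
\[
 \sum_{\lambda\in\mathcal P_{d-1}}N_q(\lambda)w_\lambda
\]
equals $M_q(d)$ identically as a polynomial in $q$. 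Because the weights are independent of $q$, the coverage conditions form a finite list of $p(d)$ rational inequalities, which we check once. Both sides are polynomials in $q$ of degree $d$; $M_q(d)$ becomes one after expanding $\binom{q/2+d/2-1}{d/2}$. Hence equality of costs reduces to matching $d+1$ coefficients.

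To find the weights, we note that equality in Theorem~\ref{thm:weighted-tile-bound} for the zero fiber $Z$ forces complementary slackness. Every positive-weight clique $a+\Delta_q(1)$ must meet $Z$ exactly once, and every vertex of $Z$ must have coverage exactly one. The first condition suggests supporting the weights on anchor types $\lambda\in\mathcal P_{d-1}$ that have no squarefree-only obstruction. For such $\lambda$, the syndrome $\chi(a)=\sum a_hh$ can always be corrected by adding a single $e_h$, since $h\mapsto\chi(a)+h$ is a bijection of $A$. So in fact every upward clique meets $Z$ exactly once, and the first condition is automatic.

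The real constraints are therefore the following. Coverage must be exactly one on every type $\mu$ that $Z$ meets, and at least one everywhere. Moreover, since $Z$ is a union of types only in aggregate, the correct formulation is the averaged one: the cost identity says $\sum_x F(x)\mathbf 1_Z(x)=|Z|$ with $|Z|=M_q(d)$. Because $M_q(d)$ exceeds $|\Delta_q(d)|/q$ by the term involving $(q-1)\binom{q/2+d/2-1}{d/2}$, exact coverage everywhere is impossible. The slack must sit on the types $\mu$ with all parts even, the "doubled" profiles $2\nu$ with $\nu\in\mathcal P_{d/2}$, which $Z$ contains in a full orbit.

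The ansatz is therefore $F(\mu)=1$ on types with an odd part, and $F(2\nu)=c_\nu\ge1$, chosen so that the total excess reproduces that correction term. We solve the triangular predecessor system degree by degree, starting from $w_{(d-1)}=1$, which is forced by the corner $(d)$. We then take the LP dual, as in Proposition~\ref{prop:degree-five-orbit-lp}, only as a consistency check.

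The main obstacle is that the predecessor system is overdetermined: $p(d)$ constraints against $p(d-1)$ variables, that is $11$ against $7$, $22$ against $15$, and $42$ against $30$. A naive ansatz may therefore be infeasible or may produce negative weights. If that happens, we relax exactness on odd-part types that $Z$ meets sparsely. We then solve the resulting small LP in exact rational arithmetic, for symbolic $q$ via coefficient matching, and verify nonnegativity and the cost identity. Since the coverage rows do not involve $q$, a single rational vector per degree serves all $m\ge1$. The small cases $q=2,4,8$, where $\ell(\lambda)>q$ makes some $N_q(\lambda)$ vanish, need separate checks: the corresponding constraints disappear, while the cost formula remains valid because $N_q(\lambda)=0$ there.
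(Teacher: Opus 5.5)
\textbf{There is a genuine gap.} Your framework matches the paper's: the zero fiber gives the lower bound, and the upper bound comes from $q$-independent weights on upward cliques indexed by anchor type, a falling-factorial cost identity, and the observation that rows with $\ell(\mu)>q$ are vacuous for $q=2,4,8$. But the upper bound is never proved. You exhibit no weight vector, and the existence of such vectors is the whole content of the theorem. The paper writes out the degree-six vector $\bigl(1,\frac12,\frac12,\frac{19}{72},\frac13,\frac5{24},\frac16\bigr)$ on anchor types $5,41,32,311,221,2111,1^5$, together with its eleven coverages. For $d=8,10$ it uses verified rational vectors with $15$ and $30$ coordinates and the displayed cost identities. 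More seriously, the ansatz you propose for finding these vectors is infeasible, so the plan as written would not produce them.

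You correctly observe that every upward clique meets $Z$ exactly once. Hence every upward-clique weighting satisfies $\cost=\sum_T w_T=\sum_{x\in Z}F(x)$. So the cost equals $M_q(d)=|Z|$ exactly when $F=1$ at every vertex of $Z$, hence on every orbit that meets $Z$.
\begin{itemize}
\item A doubled profile $2\nu$ has only even coordinates, so $\chi(2\nu)=0$ and $\mathcal O_q(2\nu)\subseteq Z$. Thus $F(2\nu)=1$ is forced.
\item Any $c_\nu>1$ therefore pushes the cost above $M_q(d)$.
\item Taking all $c_\nu=1$, with $F=1$ on odd-part types, gives an exact cover of cost $|\Delta_q(d)|/q<M_q(d)$, which cannot exist.
\item Your fallback, relaxing exactness on odd-part types that $Z$ meets \emph{sparsely}, fails for the same reason: any orbit meeting $Z$ at all must have $F=1$.
\end{itemize}

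The location of the slack is forced by parity. For even $d$ the number of odd coordinates is even, and $\chi(a)$ is the sum of their labels. Two distinct labels never sum to zero, so the types with exactly two odd parts miss $Z$; for large $q$, every other type meets it. Comparing $q\cdot\cost=|\Delta_q(d)|+E(F)$ with $\cost=|Z|$ shows that the total excess $(q-1)\binom{q/2+d/2-1}{d/2}$ must lie entirely on the two-odd-part types.

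The paper's degree-six vector does exactly this. It overcovers only $(5,1)$, $(4,1,1)$, $(3,2,1)$ and $(2,2,1,1)$, by $\frac12$, $\frac{19}{72}$, $\frac7{72}$ and $\frac1{12}$. These excesses total $q(q-1)(q+2)(q+4)/48=(q-1)\binom{q/2+2}{3}$. Coverage is exactly one on $(6),(4,2),(2,2,2),(3,1^3),(2,1^4),(1^6)$.

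The correct system therefore has equalities on the types meeting $Z$, inequalities on the two-odd-part types, and coefficient matching. Solving it, or simply the feasibility LP without your ansatz, and exhibiting the solution is what the proof needs.

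A minor point: the cost is a polynomial of degree $d-1$ in $q$, not $d$.
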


\begin{proof}
The zero fiber gives the lower bound.  For degree six, order the anchor
partitions of five as
\[
 5,\ 41,\ 32,\ 311,\ 221,\ 2111,\ 1^5
\]
and assign the corresponding upward $K_q$'s the weights
\begin{equation}
 \left(1,\frac12,\frac12,\frac{19}{72},
       \frac13,\frac5{24},\frac16\right).
 \label{eq:d6-power-two-weights}
\end{equation}
For the vertex partitions
\[
 6,\ 51,\ 42,\ 411,\ 33,\ 321,\ 3111,\ 222,\ 2211,\ 21111,\ 1^6,
\]
the coverages are
\begin{equation}
 1,\ \frac32,\ 1,\ \frac{91}{72},\ 1,\ \frac{79}{72},
 1,\ 1,\ \frac{13}{12},\ 1,\ 1.
 \label{eq:d6-power-two-coverages}
\end{equation}
Thus all orbit constraints hold.  Formula~\eqref{eq:orbit-size-partition}
gives the cost
\begin{align}
 C_{q,6}
 &=q+(q)_2+\frac{43}{144}(q)_3
   +\frac5{144}(q)_4+\frac1{720}(q)_5\notag\\
 &=\frac1q\left[
   \binom{q+5}{6}+(q-1)\binom{q/2+2}{3}
   \right]
 =M_q(6).
 \label{eq:d6-power-two-cost}
\end{align}
This proves the degree-six case.

For degrees eight and ten, the exact rational orbit vectors have fifteen and
thirty coordinates, respectively.  The archived verifier reconstructs every
partition-orbit constraint and checks all coverages and the following
falling-factorial identities over $\mathbb Q$
\cite{ZabokritskiyCompanion2026}:
\begin{align}
 C_{q,8}
 ={}&q+\frac32(q)_2+\frac{99}{128}(q)_3
 +\frac{71}{384}(q)_4+\frac7{320}(q)_5\notag\\
 &+\frac7{5760}(q)_6+\frac1{40320}(q)_7,
 \label{eq:d8-power-two-cost}\\
 C_{q,10}
 ={}&q+2(q)_2+\frac{1127}{768}(q)_3
 +\frac{409}{768}(q)_4+\frac{2021}{19200}(q)_5\notag\\
 &+\frac7{600}(q)_6+\frac1{1400}(q)_7
 +\frac1{44800}(q)_8+\frac1{3628800}(q)_9.
 \label{eq:d10-power-two-cost}
\end{align}
Exact expansion shows that each expression is $M_q(d)$ from
\eqref{eq:power-two-even-target}.  The weighted local-cover bound therefore
proves the matching upper bounds.  If $q<d$, partition types with more than
$q$ parts have orbit count zero; retaining their formal coverage constraints
only strengthens the certificate.  Hence the same certificates include
$q=2,4,8$ without separate cases.
\end{proof}

For later use, if $b\in\Delta_q(d+1)$, define the downward clique
\[
 Q_b^-:=\{b-e_i:b_i>0\}\subseteq\Delta_q(d).
\]
Any two of its vertices differ by a unit transfer, so the induced graph is a
clique; it is a $K_s$ when $b$ has support size $s$.

\begin{proposition}[Two further exact instances]

\label{prop:further-power-two-instances}
The additive target is also exact at
\[
 \alpha_8(20)=111254,
 \qquad
 \alpha_{16}(12)=1088100.
\]
\end{proposition}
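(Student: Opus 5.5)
The two claimed values are $M_8(20)$ and $M_{16}(12)$ from \eqref{eq:power-two-even-target}, and I would treat the two statements separately. The lower bound in each case is immediate. Label the coordinates by $\mathbb F_2^3$, respectively $\mathbb F_2^4$, and take the zero fiber. By Proposition~\ref{prop:general-zero-fiber}, this fiber has size
\[
 \frac18\Bigl[\binom{27}{20}+7\binom{13}{10}\Bigr]
 =\frac{888030+2002}{8}=111254
\]
for $q=8$, $d=20$, and
\[
 \frac1{16}\Bigl[\binom{27}{12}+15\binom{13}{6}\Bigr]
 =\frac{17383860+25740}{16}=1088100
\]
for $q=16$, $d=12$. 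So the whole content is the matching upper bound.

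For $\alpha_{16}(12)$ I would use the fixed-degree orbit LP of Proposition~\ref{prop:fixed-degree-orbit-lp} at $d=12$, $q=16$. Since $q>d$, the LP has $p(11)=56$ variables and $p(12)=77$ constraints. The target is to exhibit a rational weight vector $w_\lambda$ on anchor partitions of $11$ whose coverage is at least $1$ on every $\mu\in\mathcal P_{12}$ and whose cost $\sum N_{16}(\lambda)w_\lambda$ is strictly below $1088101$. Ideally the cost equals $1088100$ exactly, as in Theorem~\ref{thm:power-two-even}.

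The degree-$20$, $q=8$ instance is structurally different. Here many partitions of $20$ have more than eight parts, so the upward-clique LP loses its $q$-independence. I expect the pure upward-clique family to be too weak, because the zero-fiber excess $\frac78\binom{13}{10}$ is concentrated on vertices whose coordinates are all even. For this reason I would enlarge the library with the downward cliques $Q_b^-$ just introduced, anchored at $b\in\Delta_8(21)$ and grouped by partition type of $b$. I would also add Johnson-layer tiles where useful, exactly as in the degree-six proof of Proposition~\ref{prop:q7-d6}, which mixes upward and downward cliques.

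For $q=8$, $d=20$, the plan is as follows. Set up the orbit LP over partitions of $19$ (upward) and of $21$ (downward) into at most eight parts, with one coverage row per partition of $20$ into at most eight parts. Solve it in exact rational arithmetic and extract a primal certificate of cost $<111255$. Checking such a certificate is routine once found: one reconstructs the incidence coefficients $m_s(\mu)$ for predecessors and the analogous successor counts for downward cliques, together with the orbit sizes $N_8(\lambda)$, and verifies everything over $\mathbb Q$. As with the degree-eight and degree-ten families, I would archive the vector and state the verification rather than display all coordinates.

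The main obstacle is the existence of an exact certificate, meaning one with zero slack beyond integrality. The local cover must reproduce the lower-order term $(q-1)\binom{q/2+d/2-1}{d/2}/q$ exactly, or at least to within less than one. If the combined clique LP optimum exceeds the target by $1$ or more, I would first add translated $J(8,k)$ squarefree-layer tiles and parity-oriented cliques modelled on the odd $q=4$ tiling. If a gap still remains, I would fall back on complementary slackness: fix the saturated-tile type counts and refute the remaining case by a certified DRAT argument, as was done for $\alpha_7(6)$.
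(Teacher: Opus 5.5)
Your lower-bound computations are correct. For $(q,d)=(8,20)$ your plan, an exact rational orbit cover by upward and downward cliques, is the paper's approach (its certificate has $370$ orbit variables and $434$ coverage rows).

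The gap is at $(16,12)$, where you use only the upward-clique program of Proposition~\ref{prop:fixed-degree-orbit-lp}. No cover in that family has cost below $M_{16}(12)+1$, so the certificate you propose to exhibit does not exist.

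\begin{itemize}
\item Let $Z$ be the zero fiber over $\mathbb F_2^4$. Every upward clique meets $Z$ exactly once, so any upward-only cover has cost $\sum_{x\in Z}F(x)=M_{16}(12)+\sum_{x\in Z}(F(x)-1)$. Every type with $0$ or at least $4$ odd parts meets $Z$.
\item The row $1^{12}$ forces $w_{1^{11}}\ge\frac1{12}$.
\item Suppose each of the $Z$-types $2\,1^{10},3\,1^9,2^21^8,3\,2\,1^7,2^31^6,3^21^6,3\,2^21^5,2^41^4,3^2\,2\,1^4,3\,2^31^3,3^31^3,3^22^21^2,3^3\,2\,1$ is covered exactly. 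Each such row then determines, one at a time, the anchor weight obtained by deleting a part $1$. The chain ends with $w_{3^3\,2}=\frac14-\frac{w_{1^{11}}}{27}\le\frac{20}{81}$.
\item Since $3^3\,2$ is the only predecessor type of $3^4$, the coverage of $3^4$ is then $4w_{3^3\,2}\le\frac{80}{81}<1$.
\item Now allow slacks $F_\mu-1\ge0$ in that chain. The cheapest way to recover the missing $\frac1{324}$ in $w_{3^3\,2}$ is slack at $3^3\,2\,1$. That type enters with coefficient one and contains $140\cdot4\cdot12=6720$ zero-fiber vertices.
\item Hence the upward-only optimum is at least $M_{16}(12)+\frac{560}{27}$, more than twenty above the target.
\end{itemize}

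The missing ingredient is the paper's single extra orbit variable: the squarefree downward cliques $Q^-_{\mathbf 1_S}$ with $|S|=13$. These are $K_{13}$'s, and each meets $Z$ exactly once. With weight $v$ on each, the squarefree row becomes $12w_{1^{11}}+4v\ge1$. Taking $v=\frac14$ allows $w_{1^{11}}=0$, and the same chain then gives exactly $w_{3^3\,2}=\frac14$, so $3^4$ is covered.

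This matches the paper's $(16,12)$ certificate: $56$ upward weights plus one squarefree downward $K_{13}$ weight. Your fallback paragraph enlarges the tile library only for $q=8$; at $q=16$ the downward squarefree tile is required, not optional.
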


\begin{proof}
The zero fibers over $\mathbb F_2^3$ and $\mathbb F_2^4$ give the lower
bounds.  Exact rational orbit covers give the matching upper bounds.  For
$(q,d)=(8,20)$ the verifier reconstructs $370$ orbit variables and $434$
coverage rows for upward and downward cliques.  For $(16,12)$ it reconstructs
$56$ upward weights, one squarefree downward $K_{13}$ weight, and $77$
coverage rows.  It checks every inequality and both objective values over
$\mathbb Q$~\cite{ZabokritskiyCompanion2026}.
\end{proof}

\begin{remark}[Where the clique pattern stops]
The known case $(q,d)=(8,4)$ and Proposition~\ref{prop:further-power-two-instances}
at $(16,12)$ might suggest a continuation with $d=q-4$.  The same
certificate mechanism already breaks at $(32,28)$: an exact rational dual
certificate for the $3718$-row, $7575$-column upward--downward clique-orbit LP
has objective strictly larger than $M_{32}(28)$
\cite{ZabokritskiyCompanion2026}.  Thus the clique constraints leave a positive
partition-orbit defect and richer local templates are required.  This is a
limitation of the present tile family, not a counterexample to
additive optimality.
\end{remark}

\subsection{Comparison with the Kova\v{c}evi\'c--Tan upper bound}
The orbit LP also gives a general asymptotic statement when \(d\) is fixed
and \(q\) grows.  This reverses the fixed-alphabet limit in
\eqref{eq:checksum-KT-sandwich}; that estimate is not uniform in \(q\).
To compare bounds in the same regime, specialize Theorem~17,
Eq.~(30), of~\cite{KovacevicTan2018} to one deletion and put \(n=d\).
The insertion-side alternative in that theorem is of order \(q^d\).  Its
deletion-side alternative, with threshold \(\ell\), becomes
\begin{equation}
 U_\ell(q,d)
 =\frac1\ell\binom{q+d-2}{d-1}
  +\sum_{i=1}^{\ell-1}\binom qi\binom{d-1}{i-1}.
 \label{eq:KT-threshold-fixed-degree}
\end{equation}
For \(1\le\ell\le d-1\), the leading ratio \(U_\ell(q,d)/B_q(d)\)
is \(d/\ell+O_d(q^{-1})\), and hence is minimized at
\(\ell=d-1\).  At \(\ell=d\), the leading ratio jumps to
\(1+d(d-1)\); for \(\ell\ge d+1\), the \(i=d\) term makes
\(U_\ell(q,d)=\Theta_d(q^d)\).  Thus the best leading behavior supplied by
that theorem is
\begin{align}
 U_{d-1}(q,d)
 &=\frac1{d-1}\binom{q+d-2}{d-1}
   +\sum_{i=1}^{d-2}\binom qi\binom{d-1}{i-1}\notag\\
 &=\left(\frac d{d-1}+O_d(q^{-1})\right)B_q(d).
 \label{eq:KT-fixed-degree-comparison}
\end{align}
The support-shadow refinement in
\cite{KreindelEssayagZabokritskiy2026a} improves lower-order terms but has
the same leading ratio \(d/(d-1)\).  These bounds therefore leave a constant
relative gap.  The theorem below closes that gap, replacing the ratio by
\(1+O_d(q^{-3})\) and determining the first three powers of \(q\).
On the lower-bound side, if \(q>d\) is prime, then
\eqref{eq:prime-checksum-balance} shows that every cyclic-checksum code has
size exactly \(B_q(d)\); the new contribution here is the universal upper
bound.

Define the defect of a partition \(\lambda\in\mathcal P_n\) by
\[
 \operatorname{def}(\lambda)=n-\ell(\lambda).
\]
For \(\lambda\in\mathcal P_{d-1}\), the quantity
\(\operatorname{def}(\lambda)=d-1-\ell(\lambda)\) counts how many coordinate
collisions separate the anchor from the squarefree type.  Since
\(N_q(\lambda)=\Theta_d(q^{\ell(\lambda)})\), defect-\(r\) anchors contribute
at order \(q^{d-1-r}\).  Hence only defects zero, one, and two can affect the
first three asymptotic coefficients.  The exceptional weights are not
guessed: they are forced recursively by exact coverage of the corresponding
vertex types:
\[
\begin{aligned}
 d\,w_{1^{d-1}}&=1,\\
 w_{1^{d-1}}+(d-2)w_{2\,1^{d-3}}&=1,\\
 w_{2\,1^{d-3}}+(d-3)w_{3\,1^{d-4}}&=1,\\
 2w_{2\,1^{d-3}}+(d-4)w_{2^2 1^{d-5}}&=1.
\end{aligned}
\]
For \(d\ge7\), the proof below shows that giving weight one beyond anchor
defect two preserves feasibility and confines vertex overcoverage to types of
defect at least three.  There are \(O_d(q^{d-3})\) such vertices, and the
\(K_q\) defect identity divides this excess by \(q\), leaving only
\(O_d(q^{d-4})\) in the cost.

\begin{theorem}[Sharp fixed-degree asymptotics through three terms]
\label{thm:fixed-degree-asymptotic}
Fix \(d\ge7\).  There is a constant \(C_d>0\) such that, for all sufficiently
large \(q\),
\begin{equation}
 0\le
 \alpha_q(d)-\frac1q\binom{q+d-1}{d}
 \le C_d q^{d-4}.
 \label{eq:fixed-degree-asymptotic}
\end{equation}
Equivalently,
\begin{align}
 \alpha_q(d)
 ={}&\frac{q^{d-1}}{d!}
 +\frac{q^{d-2}}{2(d-2)!}\notag\\
 &+\frac{3d^2-7d+2}{24(d-2)!}\,q^{d-3}
 +O_d(q^{d-4}).
 \label{eq:fixed-degree-expansion}
\end{align}
The upper bound is certified by giving every upward clique
\(a+\Delta_q(1)\) a weight determined by the type of its anchor:
\begin{equation}
 w_\lambda=
 \begin{cases}
 \displaystyle\frac1d,&\lambda=1^{d-1},\\[5pt]
 \displaystyle\frac{d-1}{d(d-2)},&\lambda=2\,1^{d-3},\\[7pt]
 \displaystyle\frac{d^2-3d+1}{d(d-2)(d-3)},
   &\lambda=3\,1^{d-4},\\[7pt]
 \displaystyle\frac{d^2-4d+2}{d(d-2)(d-4)},
   &\lambda=2^2 1^{d-5},\\[7pt]
 1,&\text{otherwise}.
 \end{cases}
 \label{eq:defect-two-weights}
\end{equation}
These weights form a fractional tile cover.
\end{theorem}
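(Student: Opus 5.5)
The lower bound in \eqref{eq:fixed-degree-asymptotic} is immediate from the additive coloring bound \eqref{eq:additive-lower-bound}. For the upper bound I would work entirely inside the fixed-degree orbit LP of Proposition~\ref{prop:fixed-degree-orbit-lp} with the weights \eqref{eq:defect-two-weights}, and then convert the resulting cost into an excess over $B_q(d)$ with the $K_q$ defect identity \eqref{eq:abstract-defect}. Every upward clique has density $1/q$. Hence
\[
 \cost=\frac1q\Bigl(|\Delta_q(d)|+\sum_{\mu\in\mathcal P_d}N_q(\mu)\bigl(F(\mu)-1\bigr)\Bigr),
\]
where $F(\mu)=\sum_s m_s(\mu)w_{\operatorname{pred}_s(\mu)}$. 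So it suffices to check $F(\mu)\ge1$ for every $\mu$, with equality whenever $\operatorname{def}(\mu)\le2$. Given this, the excess is at most $\frac1q\sum_{\operatorname{def}(\mu)\ge3}N_q(\mu)\max_\mu F(\mu)$. Since $N_q(\mu)=O_d(q^{\ell(\mu)})=O_d(q^{d-3})$ and $F(\mu)\le\ell(\mu)\le d$ (all weights are at most $1$), the excess is $O_d(q^{d-4})$. The expansion \eqref{eq:fixed-degree-expansion} then follows by expanding $\frac1q\binom{q+d-1}{d}$ in powers of $q$. This is a routine Stirling-number computation: $\prod_{j=1}^{d-1}(q+j)=q^{d-1}+\binom d2q^{d-2}+e_2(1,\dots,d-1)q^{d-3}+\cdots$, divided by $d!$.

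The main work is the case analysis of coverage by vertex defect. Predecessors of $\mu$ have defect $\operatorname{def}(\mu)$ when a part $1$ is removed, and $\operatorname{def}(\mu)-1$ when a part $s\ge2$ is reduced. I would go through the types as follows.

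\emph{Defect $0$ ($\mu=1^d$).} All $d$ predecessors are $1^{d-1}$, so $F=d\cdot\frac1d=1$.

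\emph{Defect $1$ ($\mu=21^{d-2}$).} The predecessors are one copy of $1^{d-1}$ and $d-2$ copies of $21^{d-3}$, giving $F=\frac1d+(d-2)w_{21^{d-3}}=1$.

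\emph{Defect $2$.} For $31^{d-3}$ the predecessors are $21^{d-3}$ once and $31^{d-4}$ $(d-3)$ times. For $2^21^{d-4}$ they are $21^{d-3}$ twice and $2^21^{d-5}$ $(d-4)$ times. In both cases $F=1$ by the recursive equations displayed before the theorem, and substituting the stated closed forms confirms this.

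\emph{Defect $\ge3$.} A defect-$3$ vertex $\mu$ with $\ell(\mu)=d-3$ has at least one part $\ge2$ in a way that produces a predecessor of defect $\ge3$, except possibly when every predecessor has defect $2$. Every predecessor of defect $\ge3$ carries weight $1$, so $F\ge1$ as soon as one exists. Predecessors of defect exactly $3$ arise by deleting a part $1$; when $\mu$ has a part equal to $1$, this gives $F\ge1$ at once.

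For defect $\ge4$ I would argue similarly. Reducing a part $s\ge2$ yields a predecessor of defect $\ge3$, and $\mu$ has such a part, so again $F\ge1$.

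This leaves the defect-$3$ types with no part $1$, namely $4$, $(3,2)$, and $(2,2,2)$, where $d=\ell+3$ forces $d\le6$. This is exactly where the hypothesis $d\ge7$ enters. For $d\ge7$ every defect-$3$ partition of $d$ has at least $d-6\ge1$ parts equal to $1$, so it has a weight-one predecessor. For example, $41^{d-4}$ has the predecessor $41^{d-5}$, which has defect $3$.

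The step I expect to be the main obstacle is this last bookkeeping. One must make sure that every defect-$\ge3$ vertex has at least one weight-one predecessor; in particular, a defect-$3$ vertex whose only predecessors by removing a $1$ are legitimate needs care. One must also confirm that the four exceptional weights are nonnegative and at most $1$ for $d\ge7$, which guarantees $F\le d$. I would settle the first point by noting that removing a part $1$ preserves defect, and that for $d\ge7$ every partition of defect $\ge3$ either contains a part $1$ or has defect $\ge4$. In the latter case reducing any part $\ge2$ keeps the defect $\ge3$. Both cases give a weight-one predecessor, so feasibility holds. For the constant $C_d$ I would take $d\sum_{\operatorname{def}(\mu)\ge3}N_q(\mu)/q^{d-3}$, bounded uniformly for large $q$.
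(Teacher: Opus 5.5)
Your proposal is correct and follows essentially the same route as the paper. The paper also gets exact coverage one on the four vertex types of defect at most two from the predecessor multiplicities, and finds a weight-one predecessor for every vertex of defect at least three: either by deleting a part equal to one, or, since $d\ge7$ forces every defect-three partition to contain a one, by reducing a part $\ge2$ when the defect is at least four. It then uses the $K_q$ defect identity to confine the uniformly bounded overcoverage to $O_d(q^{d-3})$ vertices, and takes the lower bound and the three-term expansion of $\frac1q\binom{q+d-1}{d}$ exactly as you describe.
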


\begin{proof}
For \(d\ge7\), all four exceptional weights in
\eqref{eq:defect-two-weights} satisfy \(0<w_\lambda\le1\).  The coverage is exactly one for vertex types
\[
 1^d,\qquad 2\,1^{d-2},\qquad 3\,1^{d-3},\qquad
 2^2 1^{d-4};
\]
this follows by substituting the predecessor multiplicities in
\eqref{eq:fixed-degree-orbit-lp}.  If a vertex has larger defect and contains
a coordinate equal to one, removing that coordinate gives an anchor of defect
at least three and hence weight one.  If it has no coordinate equal to one,
then for \(d\ge7\) some predecessor also has defect at least three; again its
weight is one.  Thus every vertex has coverage at least one.

Every upward clique has \(q\) vertices, so
\[
 \sum_a w_a-\frac1q|\Delta_q(d)|
 =\frac1q\sum_{x\in\Delta_q(d)}(F(x)-1).
\]
Overcoverage occurs only on vertices of defect at least three, hence on
vectors with support at most \(d-3\).  For fixed \(d\), there are
\(O_d(q^{d-3})\) such vertices and their coverage is uniformly bounded by
\(d\).  The upper error is therefore \(O_d(q^{d-4})\).  The cyclic checksum
classes partition \(\Delta_q(d)\), so their largest member gives the matching
lower bound \(|\Delta_q(d)|/q\); for prime \(q>d\), exact equality for every
class follows from \eqref{eq:prime-checksum-balance}.  Expanding
\(q^{-1}\binom{q+d-1}{d}\) gives~\eqref{eq:fixed-degree-expansion}.
\end{proof}

\begin{remark}[The asymptotic improvement and the lower benchmark]
Equation~\eqref{eq:KT-fixed-degree-comparison} exceeds \(B_q(d)\) by
\[
 \left(\frac1{d-1}+O_d(q^{-1})\right)B_q(d)
 =\Theta_d(q^{d-1}),
\]
whereas Theorem~\ref{thm:fixed-degree-asymptotic} reduces the universal
upper-bound gap to \(O_d(q^{d-4})\).  Thus the improvement is not merely in a
lower-order constant: it closes the previous leading-coefficient gap.
For every fixed \(d\ge7\) and every sufficiently large prime \(q>d\), combining
\eqref{eq:prime-checksum-balance} with
Theorem~\ref{thm:fixed-degree-asymptotic} gives, for every
\(r\in\mathbb Z_q\),
\[
 N_r^{(q)}(d)=B_q(d),
 \qquad
  0\le \alpha_q(d)-N_r^{(q)}(d)\le C_d q^{d-4}.
\]
The exact equality is the balance result for the explicit checksum code; the
error estimate is the universal upper bound proved by the weighted tiling.
\end{remark}

The proved weight pattern above suggests the second level of a natural
hierarchy.  At a proposed level \(k\), one solves recursively for anchor types
of defect at most \(k\) so that every vertex type of defect at most \(k\) has
coverage exactly one, and gives weight one to all remaining anchor types.
Only the level realized in Theorem~\ref{thm:fixed-degree-asymptotic} is proved
here; the higher levels are conjectural.

\begin{conjecture}[The fixed-degree defect hierarchy]
\label{conj:defect-hierarchy}
For every \(k\ge0\) and every \(d\ge2k+3\), the recursive orbit weights of
levels at most \(k\) are nonnegative and form a fractional cover.  If so, then
\[
 \alpha_q(d)
 =\frac1q\binom{q+d-1}{d}
  +O_d(q^{d-k-2})
 \qquad(q\to\infty).
\]
\end{conjecture}

The exact formulas in Theorem~\ref{thm:small-fixed-degree} belong to the
existing constant-weight \(\ell_1\)-code theory.  The closed degree-five orbit
solution and the proved three-term fixed-degree expansion appear to be new
consequences of the present weighted-tiling formalism.  The higher defect
hierarchy is conjectural.

\section{Concluding remarks and the multiset interpretation}
\label{sec:coding}

A vector $a\in\Delta_q(d)$ is the multiplicity vector of a multiset of
cardinality $d$ on a $q$-symbol alphabet.  Two such multisets can yield the
same output after one deletion from each precisely when their multiplicity
vectors differ by a unit transfer.  Hence a subset of $\Delta_q(d)$ corrects
one multiset deletion if and only if it is independent in $G_q(d)$.
The cyclic fibers $\mathcal C_{q,d}(r)$ in
\eqref{eq:checksum-classes} have a direct syndrome decoder.  If
$y=a-e_j$ is received from a codeword of syndrome $r$, then
\[
 r-\sigma_q(y)=j-1\pmod q,
\]
which identifies the deleted symbol.  The exact results for three, four, and
five variables give optimal one-deletion multiset-code sizes for those
alphabet sizes, and Theorem~\ref{thm:q7} gives the optimal size for alphabet
size seven in every cardinality.  For the broader coding framework,
see \cite{KovacevicVukobratovic2013,KovacevicTan2018,
KreindelEssayagZabokritskiy2026a,KreindelEssayagZabokritskiy2026b}.

The main structural point is that two apparently different symmetry reductions
arise from the same weighted local-cover inequality.  When $q$ is fixed,
translated templates and capped boundary data give a finite-state program
valid for infinitely many degrees.  In seven variables, thirteen local
capacities and capped anchor profiles reduce all large degrees to one exact
rational certificate; a separate symmetry-reduced integer argument settles
the exceptional degree six.  When $d$ is fixed, upward cliques and coordinate
orbits give a partition-indexed program whose size is independent of $q$.

Theorems~\ref{thm:q3frac}, \ref{thm:q4frac}, \ref{thm:q5}, and \ref{thm:q7}
give new proofs of the known three- and four-symbol formulas and determine the
five- and seven-symbol cases in every degree.  For seven symbols the checksum
formula is valid from degree seven onward.  When the degree is fixed and the
alphabet grows, the results recover the known values through
degree four, solve the degree-five orbit program for $q\ge7$, determine exact
values in degrees six, eight, and ten for every power-of-two alphabet size,
and give a three-term asymptotically sharp upper bound for every fixed
$d\ge7$.

\paragraph{Open problems.}
Three directions remain especially concrete.  First, extend the
prime-checksum theorem to $q=11$.  Second, replace the successful finite
template families by a structural selection rule and prove the higher
fixed-degree defect hierarchy in Conjecture~\ref{conj:defect-hierarchy}.
Third, find richer exact-density templates
that extend Theorem~\ref{thm:power-two-even} beyond degree ten and overcome
the clique-orbit obstruction at $(q,d)=(32,28)$.
More generally, it remains open to find explicit tile families that solve
further infinite classes of parameters, as predicted by
Conjecture~\ref{conj:finite-tile}.  A stronger goal is a general theorem for
these families of finite-state linear programs: one would like checkable
conditions on a collection of local tiles that guarantee the required cover
for every sufficiently large degree, without having to find a separate
certificate for each new value of $q$.

\section*{Data and code availability}

The software, machine-readable exact certificates, deterministic generators,
retained RUP and DRAT proof objects, proof-checking tools, outputs, checksums,
and documentation supporting every computer-assisted statement are openly
available in version 1.1.0 of the computational companion
\cite{ZabokritskiyCompanion2026}.  The record includes a theorem-to-artifact
map and instructions for reproducing all exact checks; no private computation
is needed for the certified claims.

\end{document}